\newtheorem{theorem}{Theorem}[section]
\newtheorem{lemma}[theorem]{Lemma}
\newtheorem{proposition}[theorem]{Proposition}
\newtheorem{remark}[theorem]{Remark}
\newcommand{\Dt}{{\Delta t}}
\newcommand{\FF}{{\mathcal F}}
\newcommand{\LL}{{\mathcal L}}
\newcommand{\OO}{{\mathcal O}}
\newcommand{\Oe}{{\OO_{\exp}}}
\newcommand{\ZZ}{{\mathcal Z}}
\def\ind{\mathds{1}}
\newcommand{\sgn}{\displaystyle {\rm sgn}}
\newcommand{\Ee}{\displaystyle {\mathbb E}}
\newcommand{\Pp}{\displaystyle {\mathbb P}}
\newcommand{\Qq}{\displaystyle {\mathbb Q}}
\newcommand{\er}{\displaystyle {\mathbb R}}
\newcommand{\nat}{{\mathbb N}}
\newcommand{\oX}{{\overline {X}}}
\newcommand{\oZ}{{\overline {Z}}}
\newcommand{\tD}{{\tau }}
\newcommand{\Verr}{{\mathcal E}}
\newcommand{\Oomega}{{\omega  }}
\begin{document}
\title{Weak convergence analysis of the symmetrized Euler scheme  for one dimensional SDEs with diffusion  coefficient $|x|^\alpha $, $\alpha \in [\tfrac{1}{2},1)$\footnote{A previous version of this paper circulated with the title:  {\it An efficient discretisation scheme for one dimensional SDEs with a diffusion coefficient function of the form $|x|^\alpha $, $\alpha \in [\tfrac{1}{2},1)$ , Inria research report  No-5396.}}}

\author{Mireille Bossy\thanks{email: mireille.bossy@inria.fr}~}
\author{Awa Diop}
\affil{TOSCA Laboratory, INRIA Sophia Antipolis -- M\'editerran\'ee, France}
\date{November, 2010}

\maketitle

\begin{abstract}
In this paper, we are interested in the time discrete approximation of
$\Ee f(X_T)$  when $X$ is the solution of a stochastic differential equation 
with a diffusion coefficient function of the form $|x|^\alpha $.
We propose a symmetrized version of the Euler scheme, applied to  $X$. The symmetrized version  is very easy to simulate on a computer. 
For smooth functions $f$,  we prove the Feynman Kac
representation formula $u(t,x) = \Ee_{t,x} f(X_T)$, for $u$ solving the
associated Kolmogorov PDE and we obtain the upper-bounds on the spatial
derivatives of $u$  up to the order four.  Then we show that the weak error
of our symmetrized scheme is of order one, as for the classical Euler scheme. 
\end{abstract}

\paragraph{Keywords.}
discretisation scheme;  weak approximation
% PACS codes here, in the form: \PACS code \sep code
MSC  65CXX,  60H35
\section{Introduction}

We consider $(X_t,t\geq 0)$, the $\er$-valued process solution to
the following one-dimensional It\^o stochastic 
differential equation 
\begin{eqnarray}\label{modele}
X_t = x_0 + \int _0^t b(X_s)ds + \sigma \int _0^t |X_s| ^\alpha  dW_s, 
\end{eqnarray}	
where $x_0$ and $\sigma $ are given constants, $\sigma  >0$ and 
$(W_t,t\geq 0)$ is a one-dimensional Brownian motion defined on a
given probability space $(\Omega , \FF, \Pp)$. We denote by $(\FF_t,t\geq
0)$ the Brownian filtration.  
To ensure the existence of such process, we state the following 
hypotheses:
\begin{description}
\item[(H0)] {\it $\alpha  \in [1/2,1)$. } 
\item[(H1)] {\it The drift function $b$ is such that $b(0) > 0$ and
satisfies the Lipschitz condition}
\begin{eqnarray*} 
\left| b(x) - b(y)\right| \leq K |x - y|,~\forall  ~(x,y)\in \er^2. 
\end{eqnarray*} 
\end{description}
Under hypotheses (H0) and (H1), strong existence and uniqueness holds for
equation \eqref{modele}. Moreover, when $x_0 \geq 0$ and $b(0) >0$, the process
$(X_t,t\geq 0)$ is valued in $[0,+\infty )$ (see e.g. \cite{karatzas-shreve-88}).
Then $(X)$ is the unique strong solution to
\begin{align}\label{modele-positif}
X_t = x_0 + \int _0^t b(X_s)ds + \sigma \int _0^t X_s ^\alpha  dW_s. 
\end{align}

Simulation schemes for Equation \eqref{modele} are motivated by some 
applications in  Finance: in \cite{cox-ingersoll-al-85}, Cox, Ingersoll and
Ross (CIR) proposed to model the dynamics of the short term interest rate
as the solution of  \eqref{modele} with $\alpha =1/2$ and $b(x) = a -bx$.
Still to model the short term interest rate, Hull and White
\cite{hull-white-90} proposed the following mean-reverting
diffusion process 
\begin{align*}
dr_t = (a(t) - b(t) r_t) dt + \sigma (t) r_t^\alpha  dW_t
\end{align*}
with $0\leq \alpha  \leq 1$. More recently, the stochastic--$\alpha \beta \rho $ model
or $SABR$--model have been
proposed  as a stochastic correlated volatility model for the asset price 
(see \cite{hagan-al-02}):
\begin{align*}
&dX_t = \sigma _t X_t^\beta  dW^1_t \\
&d\sigma _t = \alpha \sigma  _t dB_t
\end{align*}
where $B_t = \rho   W^1_t + \sqrt {(1-\rho^2)} W^2_t$, $\rho \in[-1,1]$ and $(W^1,W^2)$ is a
2d--Brownian motion.

CIR-like models arise also in fluid mechanics: in the stochastic
Lagrangian modeling of turbulent flow, characteristic quantities like the
instantaneous turbulent 
frequency $(\omega  _t)$ are  modeled by (see \cite{dreeben-pope-97})
\begin{align*}
d\omega  _t = - C_3 \langle \Oomega_t \rangle\left (\omega  _t - \langle
\Oomega_t \rangle\right) dt - S(  \langle \Oomega_t 
\rangle) \omega _t dt \\
+ \sqrt {C_4 \langle
\Oomega_t \rangle^2\omega _t }dW_t
\end{align*}
where the ensemble average $\langle \Oomega_t \rangle$ denotes here
the  conditional expectation
with respect to the position of the underlying  portion of fluid and
$S(\omega )$ is a given function.  

In the examples above, the solution processes are all positive. In the 
practice, this could be an important feature of the model that
simulation procedures  have to preserve. 
 By using the classical Euler scheme, one cannot define a positive
approximation process.
Similar situations  occur when one consider discretisation scheme of a
reflected stochastic differential equation. To maintain the approximation
process  in a given domain,  an efficient strategy consists in 
symmetrizing  the value obtained by the Euler scheme with respect to the 
boundary of the domain (see e.g. \cite{bossy-gobet-al-04}).
Here, our preoccupation is quite similar. We want to maintain the
positive value of the approximation. In addition, we have to deal with a
just locally Lipschitz diffusion coefficient. 

In \cite{deelstra-delbaen-98}, Deelstra and Delbaen prove the strong
convergence of the Euler scheme apply to $dX_t = \kappa (\gamma  -X_t) dt + g(X_t) 
dW_t$ where $g:\er\rightarrow \er^+$ vanishes at zero and satisfies
the H\"older condition $|g(x) - g(y) | \leq b\sqrt {|x-y|}$. The Euler scheme 
is applied to the modified equation  $dX_t = \kappa (\gamma  -X_t) dt +
g(X_t\ind_{\{X_t \geq 0\}})dW_t$.  This corresponds to a
projection scheme.   For reflected SDEs, this procedure
convergences weakly with a rate $\frac{1}{2}$ (see
\cite{costantini-pacchiarotti-al-98}). Moreover, the positivity of the
simulated process is not guaranteed.
In the particular case of the CIR processes, Alfonsi \cite{alfonsi-05} proposes  some implicit schemes, which admit    analytical solutions, and derives from them a family of explicit schemes.
He 
analyses their  rate of convergence (in both strong and weak sense) 
and  proves a weak rate of convergence of order 1 and an error expansion in the
power of the time-step for the explicit family.  
Moreover, Alfonsi provides an interesting  numerical comparison
between the Deelstra and Delbaen scheme, his schemes and the present
one discussed in this paper, in the special case of CIR processes. 

In section \ref{Scheme}, we construct our time discretisation scheme for
$(X_t,t\in[0,T])$, based on the symmetrized  Euler scheme and which can
be simulated easily. 
We prove a theoretical rate of convergence of order
one for the weak approximation error. We  analyze separately the cases  
$\alpha =1/2$ and $1/2< \alpha  <1$. The convergence results  are given in the next section in Theorems
\ref{theorem-faible-CIR-gene} and \ref{theorem-faible-HUW-gene}
respectively.  The  sections \ref{Cas1} and \ref{Cas2} are
devoted to the proofs in this two respective situations. 
%$\alpha =1/2$ and $1/2< \alpha  <1$.
We denote $(\oX_t,t\in[0,T])$  the
approximation process.   
To study the weak error $\Ee f(X_T) - \Ee f(\oX_T)$, we will use the
 Feynman--Kac representation $\Ee f(X_{T-t}^x) = u(t,x)$ where
$u(t,x)$ solves the associated Kolmogorov PDE. 
The two main ingredients of the rate of convergence analysis consist 
in, first obtaining  the upper-bounds on the spatial derivatives of $u(t,x)$ up
to the order four. To our knowledge, for this kind of Cauchy problem, 
there is no generic result.
The second point consists in studying  the behavior of the
approximation process  at the origin.

Let us emphasis the difference between the situations $\alpha =1/2$ and $1/2< \alpha  <1$. 
The case  $\alpha =1/2$ could seem intuitively easier as the associated
infinitesimal generator has unbounded but smooth coefficients. In
fact, studying the spatial derivative of $u(t,x)$ with probabilistic
tools, we need to impose the condition $b(0) > \sigma  ^2$, 
in order to define the derivative of $X^x_t$ with respect to $x$.
In addition, the analysis of the approximation  process  $(\oX)$ at the
origin shows that the expectation of its local time is in
$\Dt^{b(0)/\sigma  ^2}$.

In the case $1/2< \alpha  <1$,  the derivatives of the
diffusion coefficient of the associated infinitesimal generator are
degenerated functions at point zero.  As we cannot hope to obtain uniform 
upper-bounds in $x$ for the derivatives of $u(t,x)$, we prove that the
approximation process goes to a neighborhood of the origin with an
exponentially small  probability and  we give 
upper bounds for the negative moments of the approximation
process $(\oX)$. 
\section{The symmetrized  Euler scheme for~\eqref{modele}}\label{Scheme}
For $x_0 \geq 0$, let $(X_t,t\geq 0)$ given by~\eqref{modele}
or~\eqref{modele-positif}.
For a fixed time $T>0$, we define a
discretisation scheme $(\oX_{t_k}, k=0,\ldots ,N)$ by 
\begin{align}\label{schema}
\left\{
\begin{array}{l}
\oX_0 = x_0 \geq 0,\\
\oX_{t_{k+1}}=\left| \oX_{t_k} + b(\oX_{t_k})\Dt +
\sigma  \oX_{t_k}^\alpha  (W_{t_{k+1}}-W_{t_k})\right|,
\end{array} \right.
\end{align} 
$k=0,\ldots ,N-1$, where $N$ denotes the number of 
discretisation times $t_k = k \Dt$ and $\Dt>0$ is a constant time step
such that $N \Dt = T$. 

In the sequel we will use the time continuous version
$(\oX_{t},0\leq t\leq T)$
of the discrete time process, which consists in freezing the
coefficients on each interval
$[t_k,t_{k+1})$:
\begin{align}\label{schema-continu}
\oX_t = \left| \oX_{\eta (t)} + (t - \eta (t)) b(\oX_{\eta (t)}) + \sigma 
\oX_{\eta (t)}^\alpha  (W_t -W_{\eta (t)})\right|,
\end{align} 
where $\eta (s) = \sup_{k\in \{1,\ldots ,N\}}\{t_k; t_k \leq s\}$.
The process $(\oX_{t},0\leq t\leq T)$ is valued in
$[0,+\infty )$.  By induction
on each subinterval $[t_k,t_{k+1})$, for $k=0$ to $N-1$, by using the
Tanaka's formula, we can easily show that $(\oX_t)$ is a continuous
semi-martingale with a continuous local time $(L^0_t(\oX))$ at point
$0$. Indeed, for any $t\in [0,T]$, if we set  
\begin{align}\label{Z_t}
\oZ_t = \oX_{\eta (t)} + b(\oX_{\eta (t)})(t-\eta (t))
+ \sigma \oX_{\eta (t)}^\alpha  (W_t-W_{\eta (t)}), 
\end{align}
then $\oX_t=|Z_t|$ and 
\begin{align}\label{schema-continu-bis}
\oX_t =  x_0 + \int _0^t \sgn(\oZ_s) b(\oX_{\eta (s)})ds
+ \sigma  \int _0^t \sgn(\oZ_s)\oX_{\eta (s)}^\alpha  dW_s + \frac{1}{2}L^0_t(\oX), 
\end{align} 
where $\sgn(x) := 1 - 2\,\ind_{(x\leq 0)}$. 

The following lemma ensures the existence of the positive moments of
$(X_t)$,  starting at $x_0$ at time 0, and of $(\oX_t)$, its associated discrete time process:
\begin{lemma}\label{majo-moments-oX}
Assume $\rm (H0)$ and $\rm (H1)$. For any $x_0 \geq 0$, for any $p\geq 1$,
there exists a positive constant $C$, depending on $p$, 
but also on the parameters  $b(0)$, $K$, $\sigma $, $\alpha $ and $T$, such that
\begin{align}\label{moments-oX}
 \Ee \left( \sup_{t\in [0,T]} X_t^{2p}\right) +  \Ee \left(
\sup_{t\in [0,T]} \oX_t^{2p}\right) \leq  C(1 + x_0^{2p}). 
\end{align}
\end{lemma}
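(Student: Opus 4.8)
The plan is to establish the moment bounds for the two processes separately, using standard stochastic-calculus moment estimates.

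For the continuous diffusion $(X_t)$, I would start from the positive formulation \eqref{modele-positif} and apply It\^o's formula to $X_t^{2p}$ (or to $(1+X_t^2)^p$ to keep everything smooth at the origin). This produces a drift term controlled by $b$ and a martingale term whose quadratic variation involves $\sigma^2 X_s^{2\alpha}$. Since $\alpha < 1$, the diffusion contribution $X_s^{2\alpha-2}\cdot X_s^{2p}$ is dominated by a linear-in-$X_s^{2p}$ term up to constants (using $x^{2\alpha} \le 1 + x^{2}$ and Young's inequality), and the Lipschitz bound (H1) gives $|b(x)| \le |b(0)| + K|x|$. After taking expectations, applying the Burkholder--Davis--Gundy inequality to the supremum of the martingale part, and using Young's inequality to absorb the half-power cross-terms, I would arrive at a Gr\"onwall-type inequality of the form $\Ee(\sup_{s\le t} X_s^{2p}) \le C(1+x_0^{2p}) + C\int_0^t \Ee(\sup_{u\le s} X_u^{2p})\,ds$, from which the claimed bound follows by Gr\"onwall's lemma.

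For the discrete-time scheme $(\oX_{t_k})$, I would argue by induction on $k$ directly from the recursion \eqref{schema}. The key observation is that $|\oX_{t_{k+1}}| = |\oZ_{t_{k+1}}|$, so $\oX_{t_{k+1}}^{2p} = \oZ_{t_{k+1}}^{2p}$ and the absolute value disappears once I raise to an even power; this is what lets the symmetrization be handled cleanly. Conditioning on $\FF_{t_k}$, the increment $W_{t_{k+1}}-W_{t_k}$ is an independent centered Gaussian of variance $\Dt$, so I can expand $\Ee[\,(\oX_{t_k}+b(\oX_{t_k})\Dt+\sigma\oX_{t_k}^\alpha \Delta W)^{2p}\mid \FF_{t_k}]$ using the binomial theorem and the Gaussian moments. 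The leading term is $\oX_{t_k}^{2p}$, and every remaining term carries at least one factor of $\Dt$ (the odd-power Gaussian moments vanish, and the even ones contribute powers of $\Dt$); again using (H1) and $\oX_{t_k}^{2\alpha} \le 1+\oX_{t_k}^{2}$ to control the coefficients, each such term is bounded by $C\Dt(1+\oX_{t_k}^{2p})$. This yields $\Ee[\oX_{t_{k+1}}^{2p}] \le (1+C\Dt)\Ee[\oX_{t_k}^{2p}] + C\Dt$, and iterating over $k\le N$ with $N\Dt = T$ gives the uniform-in-$k$ bound $\Ee[\oX_{t_k}^{2p}] \le C(1+x_0^{2p})$ via the discrete Gr\"onwall estimate $(1+C\Dt)^N \le e^{CT}$.

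The main obstacle, and the point requiring the most care, is the $\sup$ over $t\in[0,T]$ rather than a fixed time. For $(X_t)$ this is handled by BDG as indicated. For $(\oX_t)$ I must pass from the bound at the grid points $t_k$ to the continuous supremum over each subinterval $[t_k,t_{k+1})$: on such an interval $\oX_t = |\oZ_t|$ with $\oZ_t$ a Brownian motion with frozen drift and diffusion coefficients, so I would take the supremum inside the even power, apply BDG (or Doob's maximal inequality) to the frozen-coefficient martingale on the subinterval, bound the resulting contribution by $C\Dt(1+\oX_{t_k}^{2p})$, and sum/maximize over the $N$ subintervals using the already-established bound on $\Ee[\oX_{t_k}^{2p}]$. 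I should keep track that the constants remain independent of $\Dt$ (equivalently of $N$), which they do because each subinterval contributes an $O(\Dt)$ factor that is summed against $N = T/\Dt$ intervals.
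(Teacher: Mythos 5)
Your treatment of the fixed\--time moments is essentially sound. For $(X_t)$, the It\^o--BDG--Gronwall route is the standard one (you should make the localization by stopping times explicit, since the moments are not known to be finite a priori). For the scheme, your discrete induction via conditional Gaussian moments is a correct alternative to the paper's argument: the paper instead applies It\^o's formula to the continuous version \eqref{schema-continu-bis}, uses that $\int_0^t \oX_s^{2p-1}\,dL^0_s(\oX)=0$ to kill the local\--time term, and concludes with a stopping time and a discrete Gronwall lemma. One minor caveat: your binomial expansion presupposes that $2p$ is an even integer; for general real $p\geq 1$ you must either reduce to integer $p$ (e.g.\ by Jensen's inequality) or argue through It\^o's formula as the paper does.

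The genuine gap is in your last step, the passage to $\Ee\bigl(\sup_{t\in[0,T]}\oX_t^{2p}\bigr)$. Your subinterval analysis correctly gives $\Ee\bigl(\sup_{t\in[t_k,t_{k+1})}\oX_t^{2p}\mid \FF_{t_k}\bigr)\leq \oX_{t_k}^{2p}+C\Dt\,(1+\oX_{t_k}^{2p})$, i.e.\ the sup on one interval exceeds the left\--endpoint value by an $O(\Dt)$ term. But ``sum/maximize over the $N$ subintervals using the already\--established bound on $\Ee[\oX_{t_k}^{2p}]$'' does not close the argument: what remains to be controlled is $\Ee\bigl[\max_{0\leq k\leq N}\oX_{t_k}^{2p}\bigr]$, and this is \emph{not} bounded by $\max_k \Ee[\oX_{t_k}^{2p}]$ (expectation and maximum do not commute), while bounding the maximum by the sum of the interval suprema costs a factor $N=T/\Dt$, because each interval supremum has expectation of order one, not of order $\Dt$ --- only the \emph{excess} over the left endpoint is $O(\Dt)$. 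What is missing is a maximal (martingale) inequality applied to the process over the whole horizon. This is precisely what the paper supplies: having proved the fixed\--time bound \eqref{moments-oX-bis}, it returns to the global decomposition \eqref{for_BDG}, in which $\oX_t^{2p}$ is $x_0^{2p}$ plus drift terms plus the single stochastic integral $2p\sigma\int_0^t\oX_s^{2p-1}\sgn(\oZ_s)\oX_{\eta(s)}^{\alpha}\,dW_s$, and applies the Burkholder--Davis--Gundy inequality to that martingale on all of $[0,T]$, its bracket being controlled by the fixed\--time bounds (after a Young-type absorption of the resulting half power of the supremum). If you prefer to stay within your discrete framework, the analogous repair is to write the Doob decomposition of $(\oX_{t_k}^{2p})_k$ and apply Doob's (or a discrete BDG) inequality to its martingale part --- which requires moments of order $4p$, available from your induction --- but some maximal inequality across the whole time interval is indispensable and is absent from your sketch as written.
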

In the following proof, as well as in the rest of the paper, $C$
will denote a constant that can change from line to line. $C$ could
depend on  the parameters of the model, but it is always 
independent of $\Dt$.  

\begin{proof}
%%%%%%%%%%%%%%% Proof of the Lemma %%%%%%%%%%%%%%%%%%%%%%%%%%%%%%%%%%%%%%%%%
We prove~\eqref{moments-oX} for $(\oX_t,0\leq t\leq T)$ only, the
case of $(X_t,0\leq t\leq T)$ could be deduced by similar
arguments. 
By the It\^o's formula,  and noting that for any $t\in [0,T]$
$\int _0^t\oX^{2p-1}_s dL^0_s(\oX) = 0$, we have 
\begin{align}\label{for_BDG}
\begin{array}{l}
\oX_t^{2p} =  x_0^{2p} + \displaystyle  2p \int _0^t \oX_s^{2p-1}\sgn(\oZ_s)
b(\oX_{\eta (s)}) ds  \\
+ 2p\sigma  \int _0^t \oX^{2p-1}_s \sgn(\oZ_s) \oX_{\eta (s)}^\alpha 
dW_s + \sigma  ^2   p(2p-1)\displaystyle  \int _0^t \oX^{2p-2}_s
\oX^{2\alpha  }_{\eta (s)} ds.
\end{array}	
\end{align} 
To prove~\eqref{moments-oX}, let's start by showing that
\begin{align}\label{moments-oX-bis}
\sup_{t\in [0,T]} \Ee \left(
\oX_t^{2p}\right) \leq  C(1 + x_0^{2p}). 
\end{align}
\eqref{moments-oX} will follow  from~\eqref{moments-oX-bis},
\eqref{for_BDG} and the Burkholder-Davis-Gundy Inequality. 
Let $\tau _n$ be the stopping time defined by
$\tau _n = \inf\{ 0<s<T; \oX_s \geq n\}$, with 
$\inf\{\emptyset  \} = 0$. Then, 
\begin{align*}
\Ee \oX_{t\wedge  \tau _n}^{2p} \leq  x_0^{2p}
+ 2p \Ee\left(\int _0^{t\wedge  \tau _n} \oX_s^{2p-1}b(\oX_{\eta (s)})  ds \right)
+ \sigma  ^2   p(2p-1)\Ee\left(\int _0^{t\wedge  \tau _n}
\oX^{2p-2}_s \oX^{2\alpha }_{\eta (s)} ds\right). 
\end{align*}
By using $\rm (H0)$, $\rm (H1)$ and the Young Inequality, we get
\begin{align*}
\Ee \oX_{t\wedge  \tau _n}^{2p} \leq & x_0^{2p} + T b(0)^{2p}  
+ (2p-1)\Ee\left(\int _0^{t\wedge  \tau _n} \oX^{2p}_s ds\right) \\ 
& + 2pK  \Ee\left( \int _0^{t\wedge  \tau _n}\oX_s^{2p-1}\oX_{\eta (s)}  ds  \right)
+ \sigma  ^2   p(2p-1)\Ee\left(\int _0^{t\wedge  \tau _n} 
\oX^{2p-2}_s \oX^{2\alpha  }_{\eta (s)} ds\right). 
\end{align*}
Replacing $\oX_s$ by~\eqref{schema-continu} in the
integrals above,  by using another time $\rm (H1)$ and
the Young Inequality, we easily obtain that for any $t\in[0,T]$, 
\begin{align*}
\Ee \oX_{\eta (t)\wedge  \tau _n}^{2p}
\leq  x_0^{2p} + C \left( 1 + \int _0^{\eta (t)}
\Ee\left( \oX^{2p}_{\eta (s)\wedge  \tau _n}\right) ds\right), 
\end{align*}
where $C>0$ depends on $p$, $b(0)$, $K$, $\sigma $, $\alpha $ and $T$. 
A discrete version of the Gronwall Lemma allows us to conclude that
\begin{align*}
\sup_{k = 0,\ldots  ,N} \Ee \left(\oX_{t_k \wedge  \tau _n}^{2p} \right) \leq
C (1 + x_0^{2p}), 
\end{align*}
for another constant  $C$, which does not depend on $n$. Taking the limit
$n \rightarrow +\infty $, we get that 
$\sup_{k = 0,\ldots  ,N} \Ee (\oX_{t_k}^{2p} )\leq C(1 + x_0^{2p})$, from
which we easily deduce~\eqref{moments-oX-bis} using~\eqref{schema-continu}.  
\end{proof} 
%%%%%%%%%%%%%%% END Proof of the Lemma %%%%%%%%%%%%%%%%%%%%%%%%%%%%%%%

\subsection{Main results}

In addition of hypotheses {\rm (H0)} and {\rm (H1)}, we will analyze the
convergence rate of~\eqref{schema} under the following hypothesis: 
\begin{description}
\item[(H2)] {\it The drift function $b(x)$ is a $C^4$ function, with bounded
derivatives up to the order 4.}
\end{description}
%%%%%%%%%%%%%%%%%%%%%%%%%%%%%%%%%%%%%%%%%%%%%%%%%%%%%%%%%%%%%%%%%%%%%%%%%%%
\subsubsection{Convergence rate when $\alpha  =1/2$}
%%% CONVERGENCE DU CIR_GENE %%%%%%%%%%%%%%%%%%%%%%%%%%%%%%%%%%%%%%%%%%%%%%%%

Under {\rm (H1)}, $(X_t,0\leq t\leq T)$ satisfies 
\begin{align}\label{CIR_gene}
X_t = x_0  + \int_0^t b(X_s) ds +\sigma  \int _0^t \sqrt {X_s} dW_s,\;\;0\leq t\leq T.
\end{align}
When $b(x)$ is of the form $a - \beta   x$,  with $a>0$, $(X_t)$ is the
classical CIR process used in Financial mathematics to model the
short interest rate.  When $b(x) = a >0$, $(X_t)$ is the square of
a Bessel process.  Here we consider a generic drift function $b(x)$,
with the following restriction : 
\begin{description}
\item[(H3)] $b(0) >  \sigma  ^2$.
\end{description}

\begin{remark}\label{H3}
When $x_0>0$ and $b(0) \geq \sigma  ^2/2$, by using the Feller's test, one
can show that 
%(see e.g. \cite{karatzas-shreve-88})
$\Pp(\tau  _0 = \infty  ) =1$ where $\tau _0 = \inf \{t\geq 0; X_t = 0\}$. 
We need the stronger Hypothesis {\rm (H3)} to prove that the
derivative (in the sense of the quadratic mean) of  
$X^x_t$ with respect to $x$ is well defined (see
Proposition~\ref{justification-cir} and its proof
in Appendix~\ref{proofs-justification}). In particular, we need to use  the
Lemma~\ref{moment-inverse-cir-gene} which controls the
inverse moments and the exponential inverse moment of the
CIR--like process $(X_t)$, for some values of the 
parameter $\nu  = \frac{2 b(0)}{\sigma  ^2} -1>1$.
\end{remark}

Section \ref{Cas1} is devoted to the proof of the following 
\begin{theorem}\label{theorem-faible-CIR-gene}
Let $f$ be a $\er$-valued 
$C^4$ bounded function,  with bounded spatial derivatives up to the
order $4$. Let $\alpha  =\frac{1}{2}$ and $x_0 >0$. 
Assume  {\rm (H1), (H2)} and {\rm (H3)}. Choose  $\Dt$ 
sufficiently  small in~\eqref{schema}, i.e. $\Dt \leq 1/
(2K)\wedge  x_0$. Then there exists a positive 
constant $C$ depending on $f$, $b$, $T$ and $x_0$ such that
\begin{align*}
\left| \Ee f(X_T) -\Ee f(\oX_T) \right| 
\leq C \left(\Dt + \left(\frac{\Dt}{x_0}\right)^{\frac{b(0)}{\sigma  ^2}}\right). 
\end{align*}
\end{theorem}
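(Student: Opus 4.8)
The plan is to use the Feynman--Kac representation together with a discrete-time error decomposition à la Talay--Tubaro, adapted to handle the local-time term and the singularity of the diffusion coefficient at the origin. Writing $u(t,x) = \Ee f(X^x_{T-t})$, where $u$ solves the backward Kolmogorov PDE $\partial_t u + \LL u = 0$ with $u(T,\cdot) = f$ and $\LL = b(x)\partial_x + \tfrac{1}{2}\sigma^2 x\,\partial_{xx}$, the weak error can be expressed as a telescoping sum. The key structural fact I would rely on is that $u(0,x_0) = \Ee f(X_T)$ while $u(T,\oX_T) = f(\oX_T)$, so that
\begin{align*}
\Ee f(X_T) - \Ee f(\oX_T) = \Ee\big[u(0,x_0)\big] - \Ee\big[u(T,\oX_T)\big] = -\,\Ee\int_0^T d\,u(t,\oX_t).
\end{align*}
I would then apply It\^o's formula to $u(t,\oX_t)$ using the semi-martingale decomposition~\eqref{schema-continu-bis}, which crucially carries the local-time contribution $\tfrac{1}{2}L^0_t(\oX)$ and the $\sgn(\oZ_s)$ factors.

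\textbf{Error decomposition.} Expanding $d\,u(t,\oX_t)$ and using that $u$ solves the PDE, the martingale part vanishes in expectation (after localization, controlled by Lemma~\ref{majo-moments-oX}), and the drift terms regroup into three types of contributions. First, a \emph{coefficient-freezing} error comparing $b(\oX_{\eta(s)})$, $\oX^{2\alpha}_{\eta(s)} = \oX_{\eta(s)}$ against their values at time $s$; by a further application of It\^o's formula on each interval $[\eta(s),s]$ this produces terms of order $\Dt$, provided the spatial derivatives of $u$ up to order four are suitably bounded. Second, a \emph{sign} error coming from the factors $\sgn(\oZ_s)$, which differ from $1$ only on the event $\{\oZ_s \le 0\}$ where the Euler increment has gone negative. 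Third, the \emph{local-time} term
\begin{align*}
\tfrac{1}{2}\,\Ee\int_0^T \partial_x u(s,\oX_s)\,dL^0_s(\oX),
\end{align*}
which is supported on $\{\oX_s = 0\}$. The plan is to show the first family is $O(\Dt)$ using the derivative bounds on $u$ established earlier in Section~\ref{Cas1}, and to show the sign and local-time errors are controlled by the probability that $\oX$ visits a neighborhood of the origin.

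\textbf{Behavior at the origin.} The main obstacle, and the source of the second term $(\Dt/x_0)^{b(0)/\sigma^2}$ in the bound, is estimating the local-time contribution together with the probability that the Euler increment overshoots into the negative half-line. When $\alpha = 1/2$ and $b(0) > \sigma^2$, the drift pushes the process away from zero strongly enough that the expected local time $\Ee L^0_T(\oX)$ is of order $\Dt^{b(0)/\sigma^2}$, as anticipated in the introduction. I would estimate $\Pp(\oZ_{t_{k+1}} \le 0 \mid \oX_{t_k} = y)$ for small $y$ by a Gaussian tail bound: starting from $y$, the increment is Gaussian with mean $y + b(y)\Dt$ and variance $\sigma^2 y\,\Dt$, so crossing zero costs a factor exponentially small in $(y + b(y)\Dt)^2/(\sigma^2 y\,\Dt)$; integrating against the law of $\oX_{t_k}$ near the origin, using control of the density or of negative moments, yields the power $\Dt^{b(0)/\sigma^2}$. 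The condition $\Dt \le 1/(2K) \wedge x_0$ guarantees the first Euler step stays strictly positive and keeps the local drift well-behaved. Combining the $O(\Dt)$ smooth contribution with the $O\big((\Dt/x_0)^{b(0)/\sigma^2}\big)$ boundary contribution, and summing over the $N = T/\Dt$ steps, gives the stated estimate.

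\textbf{Where the difficulty concentrates.} I expect the genuinely hard step to be the quantitative control of the local-time and sign errors near zero, because it requires matching the analytic derivative bounds on $u$ (which may degenerate as $x \to 0$) against the probabilistic smallness of the excursions of $\oX$ to the origin. The interplay between the possibly singular behavior of $\partial_x u$, $\partial_{xx} u$ at the origin and the rate $\Dt^{b(0)/\sigma^2}$ of the local time is delicate, and it is precisely here that Hypothesis~(H3), $b(0) > \sigma^2$, is needed to ensure the exponent $b(0)/\sigma^2 > 1$ so that this boundary term is genuinely of higher order than $\Dt$ and does not spoil the order-one rate.
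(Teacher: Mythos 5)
Your overall architecture coincides with the paper's: Feynman--Kac representation, one application of It\^o's formula to $u(t,\oX_t)$ via the semimartingale decomposition \eqref{schema-continu-bis-CIR-gene}, cancellation through the PDE \eqref{edp_u_alpha=1/2}, and a three-way split into coefficient-freezing, sign, and local-time errors, with a second It\^o expansion on $[\eta(s),s]$ for the freezing terms. However, there is a genuine gap at exactly the point you yourself flag as the hard step: you never supply the mechanism that converts the conditional Gaussian tail bound $\Pp\left(\oZ_t\leq 0 \mid \oX_{\eta(t)}\right)\leq \tfrac{1}{2}\exp\left(-\oX_{\eta(t)}/(2(1-K\Dt)^{-2}\sigma^2\Dt)\right)$ (Lemma~\ref{lemma-schema-CIR-gene-1}) into the unconditional power $(\Dt/x_0)^{b(0)/\sigma^2}$. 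Your suggestion --- integrate ``against the law of $\oX_{t_k}$ near the origin, using control of the density or of negative moments'' --- does not work as stated: the symmetrized scheme is reflected at $0$, so its inverse moments of order $p\geq 1$ are infinite (the folded-Gaussian conditional density is bounded away from zero at the origin), and no density estimate for the law of $\oX_{t_k}$ is available without essentially redoing the hard argument. What the paper actually proves (Lemma~\ref{lemma-aux-CIR-gene}) is a Laplace-transform bound $\sup_k \Ee\exp\left(-\oX_{t_k}/(\gamma\sigma^2\Dt)\right)\leq C(\Dt/x_0)^{\frac{2b(0)}{\sigma^2}(1-\frac{1}{2\gamma})}$, obtained by propagating the exponential functional backwards through the scheme along a decreasing sequence $\mu_j=\mu_{j-1}(1-K\Dt-\tfrac{\sigma^2}{2}\mu_{j-1}\Dt)$ and lower-bounding $\sum_j \mu_j\Dt$ logarithmically; the exponent comes from the factor $\exp(-b(0)\sum_j\mu_j\Dt)$. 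This single lemma drives both the local-time estimate of Proposition~\ref{proposition-schema-CIR-gene-tps-local} (via the occupation-time formula) and the sign-error estimate, and nothing in your sketch substitutes for it.

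A secondary inaccuracy concerns the regularity of $u$ and the role of (H3). You treat a possible degeneracy of $\frac{\partial u}{\partial x}$, $\frac{\partial^2 u}{\partial x^2}$ at the origin as the delicate issue, and present (H3) as needed only so that $b(0)/\sigma^2>1$ keeps the boundary term of higher order than $\Dt$. In fact, for $\alpha=1/2$ the paper proves that the spatial derivatives of $u$ up to order four are bounded \emph{uniformly} on $[0,T]\times[0,+\infty)$ (Proposition~\ref{proposition_edp_alpha=1/2}, estimate \eqref{borne_deriv_u_alpha=1/2}); it is precisely this uniformity that lets one bound the local-time term by $\|\frac{\partial u}{\partial x}\|_\infty\,\Ee L^0_T(\oX)$ and the freezing terms by $C\Dt$. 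The true role of (H3) is upstream: it provides the inverse and exponential inverse moments of the CIR-like process needed to define the derivative of the flow $J^x_t(\lambda)$ and to run the Girsanov manipulations that yield those uniform bounds (see Remark~\ref{H3}). So a completed version of your plan would still have to establish this regularity; it is not a side issue that can be absorbed into the boundary estimate.
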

Under {\rm (H3)}, the  global theoretical rate of convergence is of
order one. When $b(0) < \sigma  ^2$, numerical tests for the CIR process show
that the rate of convergence becomes under-linear (see \cite{diop-thesis} and the
comparison of numerical schemes for the CIR process performed by
Alfonsi in \cite{alfonsi-05}).

%%%%%%%%%%%%%%%%%%%%%%%%%%%%%%%%%%%%%%%%%%%%%%%%%%%%%%%%%%%%%%%%%%%%%%%%%%%
\subsubsection{Convergence rate when $1/2< \alpha <1$}
%%% CONVERGENCE DU HUW_GENE %%%%%%%%%%%%%%%%%%%%%%%%%%%%%%%%%%%%%%%%%%%%%%%%

Under {\rm (H1)}, $(X_t,0\leq t\leq T)$ satisfies 
\begin{align}\label{HUW_gene}
X_t = x_0  + \int_0^t b(X_s) ds +\sigma  \int _0^t X_s^{\alpha } dW_s,\;\;0\leq t\leq T. 
\end{align}
We restrict ourselves to the case 
\begin{description}
\item[(H3')] $x_0 > \frac{b(0)}{\sqrt {2}} \Dt$.
\end{description}

\begin{remark}
When $x_0>0$, the Feller's test on process $(X_t)$ shows that it is enough to
suppose $b(0) >0$, as in {\rm (H1)}, to ensure that $\Pp(\tau  _0 = \infty ) =1$, for
$\tau _0 = \inf\{t\geq 0; X_t = 0\}$.
\end{remark} 

In Section \ref{Cas2}, we prove the following 
\begin{theorem}\label{theorem-faible-HUW-gene}
Let $f$ be a $\er$-valued bounded 
$C^4$ function,  with bounded spatial derivatives up to the order
$4$. Let $\frac{1}{2}<\alpha <1$. 
Assume  {\rm (H1), (H2)} and {\rm (H3')}.   Choose  $\Dt$ 
sufficiently  small in~\eqref{schema}, i.e.  $\Dt \leq 1/ (4K)$. Then there
exists a positive constant $C$ depending on $f$,  $\alpha $, $\sigma  $, $b$, $T$
and $x_0$ such that 
\begin{align*}	
|\Ee f(X_T)- \Ee f(\oX_T)|
\leq C \left(1+\frac{1}{x_0^{q(\alpha )}}\right)\Delta  t,
\end{align*} 
where $q(\alpha )$ is a positive constant depending only on $\alpha $.
\end{theorem}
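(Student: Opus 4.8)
The plan is to follow the standard Talay--Tubaro weak-error strategy, adapting it to the degenerate diffusion coefficient $X_s^\alpha$ with $1/2<\alpha<1$. First I would introduce the Feynman--Kac function $u(t,x)=\Ee f(X_{T-t}^x)$, which solves the backward Kolmogorov PDE $\partial_t u + \mathcal{L}u = 0$ with $u(T,x)=f(x)$, where $\mathcal{L}u = b(x)\partial_x u + \tfrac{1}{2}\sigma^2 x^{2\alpha}\partial_{xx}u$ (here writing $x^{2\alpha}$ since the process stays nonnegative). The weak error would then be decomposed telescopically over the time grid: writing $\Ee f(X_T)-\Ee f(\oX_T) = \Ee[u(T,\oX_T)]-\Ee[u(0,\oX_0)]$ and applying Itô's formula to $u(t,\oX_t)$ along the continuous scheme~\eqref{schema-continu-bis}, the exact generator term cancels against $\partial_t u$, leaving a sum of local one-step errors. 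Each local error measures the discrepancy between the frozen coefficients $b(\oX_{\eta(s)}),\oX_{\eta(s)}^\alpha$ and their values $b(\oX_s),\oX_s^\alpha$ at the running time, together with the $\mathrm{sgn}(\oZ_s)$ factors and the local-time contribution $\tfrac12 L_t^0(\oX)$ arising from the symmetrization.

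The crux is to show each such local error is $\OO(\Dt^2)$ after expectation, so that the $N=T/\Dt$ terms sum to $\OO(\Dt)$. For the smooth part, I would Taylor-expand $b$ and the coefficient differences on $[\eta(s),s]$ and invoke the bounds on the spatial derivatives of $u$ up to order four (the first main ingredient announced in the introduction, proved in Section~\ref{Cas2}). Because these derivative bounds are \emph{not} uniform in $x$ near the origin -- they blow up as negative powers of $x$ owing to the degeneracy of $x^{2\alpha}$ -- the products appearing in the error must be controlled by moments of $\oX_s$ and by \emph{negative} moments $\Ee[\oX_s^{-r}]$. This is where the second announced ingredient enters: the exponentially small probability that $(\oX)$ visits a neighborhood of the origin, together with upper bounds for the negative moments of $\oX_s$. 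I would combine a derivative bound of the schematic form $|\partial_x^k u(t,x)| \leq C(1+x^{-p_k})$ with the negative-moment estimate $\sup_{t\leq T}\Ee[\oX_t^{-r}] \leq C(1+x_0^{-q(\alpha)})$ via Hölder's inequality, which is precisely the mechanism producing the factor $1+1/x_0^{q(\alpha)}$ in the final bound, with $q(\alpha)$ aggregating all the negative exponents incurred.

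The hard part will be the local-time term $\tfrac12\,dL_t^0(\oX)$ and, more generally, controlling everything uniformly near $x=0$. Two obstacles intertwine here. First, the local time contributes $\tfrac12\int_0^T \partial_x u(s,0)\,dL_s^0(\oX)$, which I would bound by showing $\Ee[L_T^0(\oX)]$ is itself small -- intuitively $o(\Dt)$ or exponentially small in $\Dt$ -- using the fact established in the introduction that $(\oX)$ reaches the origin with exponentially small probability under {\rm (H3')}; hypothesis {\rm (H3')}, $x_0 > \tfrac{b(0)}{\sqrt 2}\Dt$, is exactly what guarantees the one-step Gaussian increment is unlikely to drive $\oZ_s$ negative from the first step onward. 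Second, since the derivative bounds degenerate at the origin, each occurrence of $\partial_x^k u$ must be paired with a negative moment of $\oX_s$, and one must verify these negative moments remain finite and uniformly bounded in $\Dt$ -- a delicate point because the symmetrized scheme can land arbitrarily close to zero. I would therefore split each expectation over the events $\{\oX_s \geq \delta\}$ and $\{\oX_s < \delta\}$ for a suitable $\delta$ (possibly $\Dt$-dependent), handling the former with the derivative bounds and the latter with the exponential small-probability estimate, and finally optimize the splitting so that both the smooth $\OO(\Dt^2)$ contributions and the boundary contributions assemble into the claimed $C(1+x_0^{-q(\alpha)})\Dt$ bound after summation over the $N$ grid intervals, with the positive moments from Lemma~\ref{majo-moments-oX} absorbing the polynomial growth of $u$ and its derivatives at infinity.
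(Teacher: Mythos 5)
Your outline has the right ingredients (Feynman--Kac representation, the Kolmogorov PDE, derivative bounds on $u$ that blow up at the origin, negative moments, the exponentially small probability of visiting a neighborhood of $0$ under {\rm (H3')}), but the central mechanism you propose does not work as stated, and the point where it fails is exactly where the paper's proof is structurally different. You plan to pair the bounds $|\partial_x^k u(t,x)|\leq C(1+x^{-p_k})$ with a negative-moment estimate $\sup_{t\leq T}\Ee\bigl[\oX_t^{-r}\bigr]\leq C(1+x_0^{-q(\alpha)})$ for the scheme itself, via H\"older. No such estimate can hold: conditionally on $\FF_{\eta(s)}$, $\oX_s=|\oZ_s|$ is a folded Gaussian whose density at the origin is bounded away from zero (as soon as $\oX_{\eta(s)}>0$), so $\Ee\bigl[\oX_s^{-r}\,\big|\,\FF_{\eta(s)}\bigr]=+\infty$ for every $r\geq 1$. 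The same problem defeats your fallback of splitting over $\{\oX_s\geq\delta\}$ and $\{\oX_s<\delta\}$: on the small event the integrand (e.g.\ $\oX_{\eta(s)}^{2\alpha}\partial^2_{x}u(s,\oX_s)\sim \oX_s^{-q}$) has \emph{infinite} conditional expectation, and a small probability cannot compensate an infinite expectation --- Cauchy--Schwarz or H\"older against $\Pp(\oX_s<\delta)^{1/2}$ still requires a finite moment of the singular factor, which is unavailable. In other words, if you apply It\^o's formula to the \emph{unstopped} scheme, the individual terms of your telescopic decomposition need not even be finite, so the decomposition is vacuous.

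The paper's resolution is to introduce the stopping time $\tau=\inf\{s\geq 0;\ \oX_s<\tfrac{b(0)}{2}\Dt\}$ \emph{before} any It\^o expansion, and to run the whole error analysis on the stopped process. This does two things at once. First, since $\oZ_{t\wedge\tau}=\oX_{t\wedge\tau}\geq\tfrac{b(0)}{2}\Dt>0$, the symmetrization never activates up to time $\tau$, so the local time disappears from the decomposition entirely --- there is no need for the estimate on $\Ee L^0_T(\oX)$ that you propose (and which the paper never proves for $\alpha>1/2$). Second, the stopped process does have finite negative moments, uniformly in $\Dt$: this is Lemma~\ref{un-sur-Z}, $\Ee\bigl(\oZ_{t\wedge\tau}^{-p}\bigr)\leq C(1+x_0^{-p})$, whose proof itself relies on the conditional Gaussian estimate \eqref{moitiealpha} and hypothesis {\rm (H3')}. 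The price of stopping, $\Ee\bigl(u(T\wedge\tau,\oZ_{T\wedge\tau})-u(T,\oX_T)\bigr)$, is paid only against the \emph{sup norm} of $u$ (never its derivatives), and is bounded by $2\|u\|_\infty\Pp(\tau\leq T)\leq\Oe(\Dt)$ by Lemma~\ref{stoppingtime-1}. After that, two applications of It\^o's formula combined with the PDE give the double-integral representation
\begin{align*}
\Ee\left[u(T\wedge\tau,\oX_{T\wedge\tau})-u(0,x_0)\right]
=\int_0^T\!\!\int_{\eta(s)}^s\Ee\left[\ind_{(\theta\leq\tau)}
\left(\LL^2_{\oX_{\eta(s)}}u-2\LL_{\oX_{\eta(s)}}Lu+L^2u\right)(\theta,\oX_\theta)\right]d\theta\,ds,
\end{align*}
whose integrand is bounded by $C(1+x_0^{-q(\alpha)})$ thanks to the indicator $\ind_{(\theta\leq\tau)}$, and the $O(T\Dt)$ size of the integration domain yields the rate. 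So the stopping time is not an optimization detail to be tuned at the end; it is the device that makes every expectation in the argument finite, and your proof cannot be repaired without it (or an equivalent localization performed before the It\^o expansion).
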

\section{The case of processes with $\alpha  =1/2$}\label{Cas1}

\subsection{Preliminary results}
In this section $(X_t)$ denotes the solution of~\eqref{CIR_gene}
starting at the deterministic point $x_0$ at time 0. When we need to  
vary the deterministic initial position, we mention it explicitly by  using the 
notation $(X_t^{x})$ corresponding to the 
unique strong solution of the equation 
\begin{align}\label{CIR_gene_flot}
X_t^{x} =  x + \int _{0}^t b(X_s^{x})ds
+\sigma  \int _{0}^t\sqrt {X_s^{x}}dW_s. 
\end{align} 
\subsubsection{On the exact process}
We have the following
\begin{lemma}\label{moment-inverse-cir-gene}
Let us assume {\rm (H1)} and {\rm (H3)}.  We  set $\nu  =\frac{2b(0)}{\sigma  ^2}-1
>1$. 
For any $p$ such that $1< p < \nu $, for any $t\in [0,T]$ and any $x>0$,  
\begin{align*} 
\Ee\left(X^x_t\right)^{-1} \leq C(T) x^{-1}\mbox{ and }
\Ee\left(X^x_t\right)^{-p} \leq C(T)
t^{-p}\mbox{ or }\Ee\left(X^x_t\right)^{-p} \leq C(T,p) x^{-p}.
\end{align*} 
Moreover for all $\mu  \leq \frac{\nu^2 \sigma  ^2}{8},$
\begin{align}\label{aux_0}	
\Ee\exp\left(\mu  \int _0^t (X^x_s)^{-1} ds\right)\leq
C(T) \left(1 + {x}^{-\nu /2}\right),
\end{align} 
where the positive constant $C(T)$ is a non-decreasing function of
$T$ and does not depend on $x$.
\end{lemma}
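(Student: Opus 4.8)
The plan is to control the inverse moments and the exponential inverse moment of the CIR-like process $(X_t^x)$ by comparing it with a genuine squared-Bessel / CIR process whose transition density is explicitly known. The key observation is that under (H1) we have $b(x) \geq b(0) - Kx$, and since $b(0) > \sigma^2$ by (H3), near the origin the drift dominates. My first step is to reduce the problem to the time-homogeneous affine case. Because $b(x) = b(0) + (b(x)-b(0))$ and $|b(x)-b(0)| \leq K|x|$, I would introduce a comparison process $Y_t$ solving $dY_t = (b(0) - KY_t)\,dt + \sigma\sqrt{Y_t}\,dW_t$ (a standard CIR process with the same Brownian motion), and argue by a comparison theorem for one-dimensional SDEs that $X_t^x \geq Y_t^x$ almost surely, at least on the region where the solutions stay positive; since negative inverse moments are a decreasing function of the process, this yields $\Ee (X_t^x)^{-p} \leq \Ee (Y_t^x)^{-p}$ and likewise for the exponential functional. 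The parameter $\nu = \tfrac{2b(0)}{\sigma^2}-1 > 1$ is precisely the index making the origin inaccessible for $Y$, which is what guarantees finiteness of these negative moments.

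For the comparison process $Y_t$, I would exploit the fact that its transition density is a non-central chi-squared density (scaled), so that, after the usual substitution, $Y_t^x$ conditioned on $Y_0 = x$ is distributed as a constant times a non-central chi-square with $2(\nu+1)$ degrees of freedom and non-centrality parameter proportional to $x$. The negative moments of such a distribution are classical: for $1 < p < \nu$ the expectation $\Ee (Y_t^x)^{-p}$ is finite, and a direct estimate on the density gives both bounds — the $t^{-p}$ bound valid uniformly for small $x$ (coming from the behaviour of the density near zero, where the non-centrality washes out and one reads off the scale $\sim \sigma^2 t$), and the $x^{-p}$ bound valid for $x$ bounded away from zero. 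For the linear case $p=1$, a short computation using the explicit mean-reverting dynamics, or Itô's formula applied to $1/Y_t$ together with the condition $\nu > 1$, yields $\Ee(Y_t^x)^{-1} \leq C(T)x^{-1}$; here one uses that $d(1/Y_t)$ has a drift of sign controlled by $\nu > 1$.

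The exponential bound \eqref{aux_0} is the main obstacle and requires more care. The threshold $\mu \leq \nu^2\sigma^2/8$ strongly suggests a change-of-measure argument. My plan is to look for a function $\varphi$ so that $\exp(\varphi(Y_t))$ (or a power of $Y_t$) times the exponential functional becomes a supermartingale, i.e. to apply the Feynman–Kac / Girsanov machinery: seek $\lambda$ such that $M_t = (Y_t/x)^{-\lambda}\exp\left(\mu\int_0^t Y_s^{-1}ds\right)$ is a supermartingale. Applying Itô's formula to $Y_t^{-\lambda}$ produces an integrand proportional to $\int_0^t Y_s^{-1}$ with coefficient $\tfrac{\sigma^2}{2}\lambda(\lambda+1) - b(0)\lambda + \cdots$, and the quadratic-in-$\lambda$ condition forces the optimal choice $\lambda = \nu/2$, which is exactly why the exponent $x^{-\nu/2}$ and the constant $\nu^2\sigma^2/8$ appear. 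Making $M_t$ a true supermartingale requires the constraint $\mu \leq \nu^2\sigma^2/8$; the drift term contributed by the $-Kx$ part of the comparison and the remainder must then be absorbed into the constant $C(T)$ by a Gronwall argument, taking expectations and using $\Ee M_t \leq 1$. The delicate point will be justifying the supermartingale property rigorously (localization via the stopping times $\tau_n = \inf\{s: Y_s \leq 1/n\}$, then passing to the limit using the inverse-moment bounds already established), since the local time at zero plays no role only because $\nu > 1$ keeps the origin unattained.
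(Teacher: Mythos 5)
Your comparison step and your treatment of the inverse moments are essentially the paper's own argument: the paper also reduces to the CIR process $Y^x$ with drift $b(0)-Ky$ via the comparison theorem, and then exploits explicit knowledge of the CIR law (it uses the Laplace transform together with $x^{-p}=\Gamma(p)^{-1}\int_0^{\infty}u^{p-1}e^{-ux}\,du$ rather than the non-central chi-squared density, but the computation is the same). That part of your proposal is sound.

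The genuine gap is in the exponential bound \eqref{aux_0}, which you rightly call the main obstacle. Your algebra is correct: with $\lambda=\nu/2$ the drift of $M_t=(Y_t/x)^{-\lambda}\exp\bigl(\mu\int_0^t Y_s^{-1}ds\bigr)$ coming from the $Y^{-\lambda-1}$ term is nonpositive exactly when $\mu\le \nu^2\sigma^2/8$, and after pulling out the factor $e^{\nu K t/2}$ generated by the $-Ky$ part of the drift, Fatou makes the localized object a true supermartingale. But what this yields is
\begin{align*}
\Ee\Bigl[\,Y_t^{-\nu/2}\exp\Bigl(\mu\int_0^t Y_s^{-1}ds\Bigr)\Bigr]\le C(T)\,x^{-\nu/2},
\end{align*}
and this does \emph{not} imply the statement $\Ee\bigl[\exp\bigl(\mu\int_0^t Y_s^{-1}ds\bigr)\bigr]\le C(T)(1+x^{-\nu/2})$: the weight $Y_t^{-\nu/2}$ degenerates precisely on the event where $Y_t$ is large, so it cannot be dropped. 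Nor can you interpolate it away in general: writing $e^{\mu\int}=Y_t^{\nu/(2q)}\cdot Y_t^{-\nu/(2q)}e^{\mu\int}$ and applying H\"older requires the supermartingale estimate with coefficient $q\mu$, $q>1$, hence $q\mu\le\nu^2\sigma^2/8$; this works for $\mu$ strictly below the threshold but fails at the critical value $\mu=\nu^2\sigma^2/8$, which the lemma includes (the constant $\nu^2\sigma^2/8$ is attained as the minimum over $\lambda$, so there is no slack). The missing idea is to use the driftless part of $M$, i.e.\ the exponential local martingale $D_t=\exp\bigl(-\tfrac{\nu\sigma}{2}\int_0^t Y_s^{-1/2}dW_s-\tfrac{\nu^2\sigma^2}{8}\int_0^t Y_s^{-1}ds\bigr)$, as a Girsanov \emph{density} rather than merely taking its expectation. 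Under the new measure $\Qq$, the drift of $Y$ becomes $b(0)-\tfrac{\nu\sigma^2}{2}-Ky=\tfrac{\sigma^2}{2}-Ky$, i.e.\ $Y$ is a CIR process of dimension $2$ (index $0$), and then
\begin{align*}
\Ee\Bigl[\exp\Bigl(\tfrac{\nu^2\sigma^2}{8}\int_0^t Y_s^{-1}ds\Bigr)\Bigr]
= x^{-\nu/2}\,e^{\nu K t/2}\,\Ee^{\Qq}\bigl[Y_t^{\nu/2}\bigr]
\le C(T)\bigl(1+x^{-\nu/2}\bigr),
\end{align*}
since positive moments of the dimension-$2$ CIR are bounded by $C(T)(1+x^{\nu/2})$. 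This change-of-measure step, together with the justification that $D$ is a true martingale and not only a local one, is exactly what the paper delegates to the Geman--Yor absolute-continuity relation $\Pp^{(\nu)}\big|_{\FF_t}=(R_t/y)^{\nu}\exp\bigl(-\tfrac{\nu^2}{2}\int_0^t R_s^{-2}ds\bigr)\Pp^{(0)}\big|_{\FF_t}$ after the transformation $H_t=\tfrac{2}{\sigma}\sqrt{r_t}$, the final bound being the $\nu$-th moment of a two-dimensional Bessel process. Without this step your proof stops at a weighted estimate that is strictly weaker than \eqref{aux_0}.
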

\begin{proof}
As $b(x)\geq b(0)-Kx$, The Comparison Theorem gives that, a.s. 
$X^x_t \geq Y^{x}_t$, for all
$t\geq 0 $, where $(Y^x_t,t\leq T)$ is the  CIR process solving  
\begin{align}\label{CIR_gene_0} 
Y^{x}_t =  x + \int _0^t \left(b(0)-K Y^{x}_s\right)ds
+ \sigma \int _{0}^t  \sqrt {Y^x_s} dW_s. 
\end{align} 
In particular, $\Ee \exp (\mu  \int _{0}^t (X_s^{x})^{-1} ds)
\leq \Ee \exp (\mu  \int _{0}^t (Y_s^{x})^{-1} ds)$.
As $b(0)> \sigma  ^2$ by {\rm (H3)}, one can derive~\eqref{aux_0} from the
Lemma~\ref{novikov-cir}. Similarly, for the upper bounds on the inverse moments of
$X^x_t$, we apply the Lemma~\ref{moment-inverse-cir}. 
\end{proof} 

\subsubsection{On the associated Kolmogorov PDE}
\begin{proposition}\label{proposition_edp_alpha=1/2}
Let $\alpha  =1/2$. Let $f$ be a $\er$-valued 
$C^4$ bounded function,  with bounded spatial derivatives up to the
order $4$. 
We consider the $\er$-valued function  defined on 
$[0,\,T]\times [0,+\infty )$ by $u(t,x)=\Ee f(X^x_{T-t})$. 
Under {\rm (H1), (H2)} and {\rm (H3)}, $u$ is in
$C^{1,4}([0,T]\times  [0,+\infty ))$. That is, $u$ has a first derivative in the time
variable and derivatives up to order 4 
in the space variable. Moreover, there
exists a positive constant  
$C$ depending on $f$, $b$ and $T$ such that, for all $x\in [0,+\infty )$, 
\begin{align}	
\displaystyle  \sup_{t\in[0,T]}\displaystyle  \left|\frac{\partial  u}{\partial t}(t,x)\right|\leq & C(1 + x),\label{borne_u_alpha=1/2}\\
\|u \|_{L^\infty  \left([0,T]\times [0,+\infty )\right)} +
\sum _{k=1}^4 \left\|\frac{\partial ^k u}{\partial  x^k}\right\|_{L^\infty  \left([0,T]\times 
[0,+\infty )\right)}\leq & C \label{borne_deriv_u_alpha=1/2}
\end{align} 
and $u(t,x)$ satisfies 
\begin{align}\label{edp_u_alpha=1/2}
\left\{\begin{array}{l}
\displaystyle  \frac{\partial  u}{\partial  t}(t,x) + b(x)\frac{\partial  u}{\partial  x}(t,x) +\frac{\sigma  ^2}{2}x
\frac{\partial ^2u}{\partial  x^2}(t,x)=0,~(t,x)\in [0,T)\times [0,+\infty  ), \\
\displaystyle  u(T,x)=f(x),\; x\in [0,+\infty  ).
\end{array} \right. 
\end{align} 
\end{proposition}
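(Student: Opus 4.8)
The plan is to establish the spatial regularity of $u(t,x)=\Ee f(X^x_{T-t})$ by differentiating under the expectation sign, and then to read off the Kolmogorov equation \eqref{edp_u_alpha=1/2} and the time regularity from the Markov property. First I would invoke Proposition~\ref{justification-cir}, which under {\rm (H1)} and {\rm (H3)} provides the differentiability (in quadratic mean) of the flow $x\mapsto X^x_t$ up to order four: the derivative processes $J^{(k)}_t=\partial_x^k X^x_t$ solve the variational SDEs obtained by formally differentiating \eqref{CIR_gene_flot}. For $k=1$ this is the linear equation $dJ_t=b'(X^x_t)J_t\,dt+\tfrac{\sigma}{2\sqrt{X^x_t}}J_t\,dW_t$ with $J_0=1$, whose explicit solution is the nonnegative process $J_t=\exp\!\big(\int_0^t b'(X^x_s)\,ds\big)\,\mathcal E_t$, where $\mathcal E_t$ denotes the Dol\'eans exponential of $\int_0^t \tfrac{\sigma}{2\sqrt{X^x_s}}\,dW_s$. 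For $k\geq 2$ the equation for $J^{(k)}$ remains linear but is forced by products of the lower order derivative processes and by the derivatives of $x\mapsto \sigma\sqrt x$, which blow up like $x^{-(2k-1)/2}$ at the origin.

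The heart of the argument is to bound the moments of the $J^{(k)}_t$ uniformly in $x$. For the first derivative this is immediate and does not even use {\rm (H3)}: since $\mathcal E_t$ is a nonnegative local martingale, hence a supermartingale, and $|b'|\leq K$, one gets $\Ee J_t\leq e^{KT}$, so that $|\partial_x u|\leq \|f'\|_\infty\,\Ee J_{T-t}\leq C$. For $k\geq 2$, however, bounding $\Ee|J^{(k)}_t|^{\,m}$ requires controlling the negative moments of $X^x_s$ and the exponential of $\mu\int_0^t (X^x_s)^{-1}\,ds$; this is exactly what Lemma~\ref{moment-inverse-cir-gene} supplies, and it is here that the condition $\nu=\tfrac{2b(0)}{\sigma^2}-1>1$, i.e. {\rm (H3)}, is used to keep the admissible range of inverse and exponential moments nonempty. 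Granting these bounds, I would justify by dominated convergence the interchange of $\partial_x^k$ and $\Ee$, giving a Fa\`a-di-Bruno representation $\partial_x^k u(t,x)=\Ee\big[\sum f^{(j)}(X^x_{T-t})\,(\text{products of the }J^{(i)}_{T-t})\big]$. Since $f$ and its derivatives up to order four are bounded, the moment estimates yield the uniform bound \eqref{borne_deriv_u_alpha=1/2}, as well as the joint continuity of the spatial derivatives in $(t,x)$.

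Finally I would derive the PDE and the time derivative from the martingale property. By the Markov property, $u(t+s,X^x_s)=\Ee[f(X^x_{T-t})\mid \FF_s]$ is a martingale on $[0,T-t]$. Having shown that $u$ is $C^2$ in space and continuous in time, It\^o's formula applied to $u(t+s,X^x_s)$ produces a finite variation part $\int_0^s(\partial_t u+\LL u)(t+r,X^x_r)\,dr$ with $\LL=b(x)\partial_x+\tfrac{\sigma^2}{2}x\,\partial_x^2$; the martingale property forces this to vanish, which gives \eqref{edp_u_alpha=1/2} and, along the way, the existence of $\partial_t u=-\LL u$. The terminal condition $u(T,x)=f(x)$ is immediate. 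Reading $\partial_t u=-b(x)\partial_x u-\tfrac{\sigma^2}{2}x\,\partial_x^2 u$ and using the linear growth $|b(x)|\leq |b(0)|+Kx$ together with the bounds on $\partial_x u,\partial_x^2 u$ then yields \eqref{borne_u_alpha=1/2}; continuity of $\partial_t u$ follows from that of the spatial derivatives and of $b$, completing the $C^{1,4}$ claim.

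I expect the main obstacle to be the moment control of the higher order variation processes $J^{(k)}$ near $x=0$: the singularities $x^{-(2k-1)/2}$ in their coefficients make the bounds genuinely delicate, and the uniformity in $x$ down to the origin is what forces the use of the sharp exponential inverse-moment estimate \eqref{aux_0} and the threshold $\nu>1$. A secondary, more routine difficulty is the rigorous justification of the differentiation under the expectation and of the vanishing-drift step in It\^o's formula, both of which I would handle by localization combined with the moment and inverse-moment estimates above.
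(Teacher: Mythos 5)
Your outline (differentiate under the expectation, then recover the PDE and $\partial_t u$ from the Markov property and It\^o's formula) has the same skeleton as the paper's, but the step you place at the heart of the argument is exactly the one that cannot be carried out, and the paper's proof is engineered specifically to avoid it. You propose to differentiate the flow up to order four and to bound the moments of $J^{(k)}_t=\partial_x^k X^x_t$, $k\ge 2$, uniformly in $x$ via Lemma~\ref{moment-inverse-cir-gene}. Two things break. First, every estimate in that lemma degenerates at the origin: $\Ee (X^x_t)^{-1}\le C x^{-1}$, $\Ee(X^x_t)^{-p}\le C t^{-p}$ (not integrable in time for $p\ge1$) or $C x^{-p}$, and the exponential bound \eqref{aux_0} costs $1+x^{-\nu/2}$; so even when the moments of $J^{(k)}$ are finite, the resulting bounds blow up as $x\to 0$ and cannot yield \eqref{borne_deriv_u_alpha=1/2}, which is uniform on $[0,+\infty)$. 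Concretely, $J^{(2)}$ solves a linear SDE forced by $-\tfrac{\sigma}{4}X_s^{-3/2}J_s^2\,dW_s$, so $\Ee|J^{(2)}_t|^2$ involves $\Ee\int_0^t J_s^4 X_s^{-3}\,ds$, which with $J_s\le C\sqrt{X^x_s/x}$ (cf. \eqref{Jbound}) is of order $x^{-3}$. Second, finiteness itself needs much more than {\rm (H3)}: already $\Ee (J^x_t)^2$ leads, after H\"older, to an exponential inverse moment $\Ee\exp(c\,\sigma^2\int_0^t (X^x_s)^{-1}ds)$ with a constant $c$ exceeding the admissible threshold $\nu^2/8$ of \eqref{aux_0} unless $\nu$ is roughly $3$ or more, while under {\rm (H3)} one only has $\nu>1$; likewise $\Ee(X^x_t)^{-p}<\infty$ only for $p<\nu$, far short of the inverse powers (up to $x^{-7/2}$ in the coefficients) that your Fa\`a-di-Bruno expansion of $\partial_x^4 u$ would require. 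You flag this as the ``main obstacle'' to be absorbed by the cited lemmas, but it is not a technical delicacy: it is an obstruction that forces a different method, and the authors say so explicitly (they transform the expressions ``to avoid again the appearance of the derivative of $J^x_t(\frac12)$ that we do not control'').

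The paper never uses any flow derivative beyond the first (note also that Proposition~\ref{justification-cir} only gives first-order differentiability of functionals of the stated form, not fourth-order differentiability of the flow, as your first paragraph asserts). Its mechanism is: apply Proposition~\ref{justification-cir} once, obtaining e.g. \eqref{expression-derivee-2-en-x}; use the Markov property and time homogeneity to rewrite the terms containing $\int_0^\cdot b''(X)J\,ds$ through $\partial_x u$ itself; then perform a Girsanov change of measure with density $1/\ZZ^{(\lambda,\lambda+\frac12)}$, under which $\ZZ_t^{(\lambda,\lambda+\frac12)}J_t^x(\lambda)=\exp(\int_0^t b'(X_s^x(\lambda))\,ds)$ is bounded and the law of $X^x(\lambda)$ becomes that of $X^x(\lambda+\tfrac12)$. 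This yields representations such as \eqref{new-expression-derivee-1-en-x} and \eqref{new-expression-derivee-2-en-x}, in which every factor is bounded and the only probabilistic input is $\Ee J^x_t(\lambda)\le e^{KT}$ uniformly in $x$ (Lemma~\ref{Martingale-expo-lambda}); iterating over $\lambda=0,\tfrac12,1,\tfrac32$ produces all four derivatives with bounds uniform down to $x=0$. That is where {\rm (H3)} is actually spent — the Novikov condition behind Lemma~\ref{Martingale-expo-lambda} and the rigorous differentiation in Proposition~\ref{justification-cir} — not in supplying the high inverse and exponential moments your route would need. Your final paragraph (PDE, terminal condition, and the bound $|\partial_t u|\le C(1+x)$ from $\partial_t u=-\LL u$) is sound, but it sits downstream of spatial bounds that your approach cannot deliver.
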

In all what follows, we will denote $\|~\|_{\infty }$ the norm on $L^\infty $
spaces. 
Before to prove the Proposition~\ref{proposition_edp_alpha=1/2}, we 
introduce some notations and give preliminary results:
for any $\lambda \geq 0$ and any $x > 0$,  we denote by 
$(X_t^{x}(\lambda ),0\leq t\leq T)$,  the  $[0,+\infty )$-valued process solving 
\begin{align}\label{Xlambda}
X_t^{x}(\lambda ) =  x + \lambda \sigma ^2t + \int _{0}^t b(X_s^{x}(\lambda ))ds
+\sigma  \int _{0}^t\sqrt {X_s^{x}(\lambda )}dW_s. 
\end{align} 
Equation~\eqref{Xlambda} has a non-exploding unique strong
solution. Moreover, for all $t\geq 0$, $X_t^{x}(\lambda ) \geq  X_t^{x}$. 
The coefficients are locally
Lipschitz on $(0,+\infty )$, with locally Lipschitz first order
derivatives. Then $X_t^{x}(\lambda )$ is continuously differentiable (see
e.g. Theorem V.39 in \cite{protter-90}), and if we denote
$J^x_t(\lambda ) = \frac{dX_t^{x}}{dx}(\lambda )$, 
the process $(J_t^{x}(\lambda ),0\leq t\leq T)$ satisfies the linear
equation
\begin{align}\label{derivee-flot-eds}
J^x_t(\lambda ) = 1+ \int _{0}^t  J^x_s(\lambda ) b'(X_s^{x}(\lambda ))ds
+ \int _{0}^t J^x_s(\lambda ) \frac{\sigma  dW_s}{2\sqrt {X_s^{x}(\lambda )}}. 
\end{align} 
By Lemma~\ref{moment-inverse-cir-gene}, the process
$(\int _0^t\frac{dW_s}{\sqrt {X^x_s(\lambda  )}},0\leq t\leq T)$ is a locally square integrable
martingale. Then, for all $\lambda  \geq 0$,  $J^x_t(\lambda )$
is given by  (see e.g. Theorem V.51 in \cite{protter-90}),
\begin{align}\label{derivee-flot}
J^x_t(\lambda ) =  \exp\left(
\int _{0}^t b'(X_s^{x}(\lambda ))ds +
\frac{\sigma }{2}\int _{0}^t\frac{dW_s}{\sqrt {X_s^{x}(\lambda )}}
-\frac{\sigma ^2}{8}\int _{0}^t\frac{ds}{X_s^{x}(\lambda )}\right). 
\end{align} 
\begin{lemma}\label{Martingale-expo-lambda}
Assume {\rm (H3)}. The process $(M^x_t(\lambda ),0\leq t\leq T)$ defined by 
\begin{align*}
M^x_t(\lambda ) = \exp\left(
\frac{\sigma }{2}\int _{0}^t\frac{dW_s}{\sqrt {X_s^{x}(\lambda)}}
-\frac{\sigma  ^2}{8}\int _0^t\frac{ds}{X_s^{x}(\lambda)}\right)
\end{align*} 
is a $\Pp$-martingale. Moreover,
$\sup_{t\in [0,T]} \Ee \left(J^x_t(\lambda )\right)\leq C$
where the positive constant $C$ does not depend on $x$ . 
\end{lemma}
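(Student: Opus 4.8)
The plan is to prove the two claims of Lemma~\ref{Martingale-expo-lambda} separately but with a shared foundation: the exponential inverse moment control provided by Lemma~\ref{moment-inverse-cir-gene}. The key observation is that $M^x_t(\lambda)$ is exactly the stochastic exponential of the local martingale $N_t = \frac{\sigma}{2}\int_0^t \frac{dW_s}{\sqrt{X^x_s(\lambda)}}$, whose quadratic variation is $\langle N\rangle_t = \frac{\sigma^2}{4}\int_0^t \frac{ds}{X^x_s(\lambda)}$. Since a nonnegative local martingale is automatically a supermartingale, $M^x_t(\lambda)$ is a supermartingale with $\Ee M^x_t(\lambda)\le 1$; to upgrade this to the martingale property it suffices to verify a Novikov-type criterion, and here I would use a localized or iterated version of Novikov's condition rather than the global one, since we only control $\Ee\exp(\mu\int_0^t (X^x_s(\lambda))^{-1}ds)$ for $\mu \le \nu^2\sigma^2/8$.

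First I would establish the martingale property. The direct Novikov condition would require $\Ee\exp(\frac{1}{2}\langle N\rangle_t) = \Ee\exp(\frac{\sigma^2}{8}\int_0^t (X^x_s(\lambda))^{-1}ds)<\infty$, and by Lemma~\ref{moment-inverse-cir-gene} (applied to $X^x_s(\lambda)\ge X^x_s$, so that its inverse moments are dominated) this holds precisely because $\mu=\sigma^2/8 \le \nu^2\sigma^2/8$ whenever $\nu\ge 1$, which is guaranteed under (H3) since $\nu = \frac{2b(0)}{\sigma^2}-1 >1$. Thus Novikov's criterion applies directly on $[0,T]$ and $M^x_t(\lambda)$ is a genuine $\Pp$-martingale; I would note that the domination $X^x_s(\lambda)\ge X^x_s$ lets us transfer the exponential inverse moment bound \eqref{aux_0} from $X^x_s$ to $X^x_s(\lambda)$ uniformly in $\lambda\ge 0$.

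Second, for the uniform bound on $\Ee(J^x_t(\lambda))$, I would start from the explicit representation \eqref{derivee-flot}, which factorizes as
\begin{align*}
J^x_t(\lambda) = \exp\left(\int_0^t b'(X^x_s(\lambda))\,ds\right) M^x_t(\lambda).
\end{align*}
Under (H2) the derivative $b'$ is bounded, so $\exp(\int_0^t b'(X^x_s(\lambda))ds)\le \exp(\|b'\|_\infty T)$, a deterministic constant independent of $x$ and $\lambda$. Taking expectations and using that $M^x_t(\lambda)$ is a martingale with $\Ee M^x_t(\lambda)=M^x_0(\lambda)=1$ gives $\Ee(J^x_t(\lambda)) \le \exp(\|b'\|_\infty T)$, which is the desired uniform bound. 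I would emphasize that the constant depends only on $\|b'\|_\infty$, $T$, $\sigma$ and $\alpha$, and in particular not on $x$ or $\lambda$.

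The main obstacle is the verification of the Novikov condition near the origin, where the diffusion coefficient $\sqrt{x}$ degenerates and $(X^x_s(\lambda))^{-1}$ can blow up; this is exactly where hypothesis (H3), in the sharp form $b(0)>\sigma^2$ (equivalently $\nu>1$), is essential, since it is the threshold that makes the admissible range $\mu\le\nu^2\sigma^2/8$ of the exponential inverse moment estimate large enough to cover the required value $\mu=\sigma^2/8$. A secondary technical point is justifying the comparison $X^x_s(\lambda)\ge X^x_s$ and the resulting monotonicity of inverse moments; this follows from the comparison theorem already invoked in the proof of Lemma~\ref{moment-inverse-cir-gene}, since the extra drift term $\lambda\sigma^2 t\ge 0$ in \eqref{Xlambda} only pushes the process upward. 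I would carry out these steps in the order: (i) factorize $J^x_t(\lambda)$ and reduce the problem to $M^x_t(\lambda)$; (ii) verify Novikov using \eqref{aux_0} and the comparison; (iii) conclude the martingale property and the moment bound together.
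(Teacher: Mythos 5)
Your proof is correct and follows essentially the same route as the paper: verify Novikov's criterion for $M^x_t(\lambda)$ via the exponential inverse-moment bound \eqref{aux_0} of Lemma~\ref{moment-inverse-cir-gene}, then bound $\Ee(J^x_t(\lambda))$ through the factorization $J^x_t(\lambda)=\exp\left(\int_0^t b'(X^x_s(\lambda))\,ds\right)M^x_t(\lambda)$ and the boundedness of $b'$ under {\rm (H2)}. The only difference is that you make explicit two points the paper leaves implicit, namely the comparison $X^x_s(\lambda)\geq X^x_s$ used to transfer \eqref{aux_0} to $X^x_s(\lambda)$ and the check that $\sigma^2/8\leq \nu^2\sigma^2/8$ since $\nu>1$ under {\rm (H3)}.
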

\begin{proof}
By Lemma~\ref{moment-inverse-cir-gene}, $(M^x_t(\lambda ),0\leq t\leq T)$
satisfies the Novikov criterion. Under {\rm (H2)}, $b'$ is
a bounded function and  
\begin{align*}
\Ee\left[J^x_t(\lambda  )\right] = 
\Ee\left[\exp\left(\int _0^t b'(X^x_s(\lambda ))ds\right) M^x_t(\lambda  )\right]
\leq \exp(\|b'\|_\infty  T). 
\end{align*}
\end{proof}
Let $(\ZZ_t^{(\lambda  ,\lambda +\frac{1}{2})},0\leq t\leq T)$ defined by
\begin{align}\label{def_Z}
\begin{array}{ll}
\ZZ_t^{(\lambda ,\lambda +\frac{1}{2})}
=& \exp\left(-\frac{\sigma  }{2} \int _0^t \frac{1}{\sqrt {X^x_s(\lambda  )}}
\left( \frac{dX^x_s(\lambda  )}{\sigma  \sqrt {X^x_s(\lambda  )}} \right.\right.\\
&\hspace{1.7cm}\left.\left. -
\frac{b(X^x_s(\lambda  )) + (\lambda +\frac{1}{2}) \sigma ^2}{\sigma  \sqrt {X^x_s(\lambda  )}} ds \right)
 \frac{\sigma ^2}{8} \int _0^t \frac{ds}{X^x_s(\lambda  )}\right).
\end{array}	
\end{align} 
By the Girsanov Theorem, under the probability $\Qq^{\lambda +\frac{1}{2}}$
such that
$\frac{ d\Qq^{\lambda +\frac{1}{2}}}{ d\Pp}\bigg|_{\FF_t}
=\frac{1}{Z_t^{(\lambda ,\lambda +\frac{1}{2})}}$, the process 
$(B_t^{\lambda +\frac{1}{2}} = \int _0^t \frac{dX^x_s(\lambda  )}{\sigma  \sqrt {X^x_s(\lambda  )}} -
\frac{b(X^x_s(\lambda  )) + (\lambda +\frac{1}{2}) \sigma ^2}{\sigma  \sqrt {X^x_s(\lambda  )}} ds,t\in[0,T])$ is a
Brownian motion on $(\Omega ,\FF_T,\Qq^{\lambda +\frac{1}{2}})$.
Indeed, we have that
\begin{align*} 
X_t^{x}(\lambda ) =  x + (\lambda +\frac{1}{2})\sigma ^2t + \int _{0}^t b(X_s^{x}(\lambda ))ds
+\sigma  \int _{0}^t\sqrt {X_s^{x}(\lambda )}dB^{\lambda +\frac{1}{2}}_s.
\end{align*} 
Hence, $\LL^{\Qq^{\lambda +\frac{1}{2}}}(X^x_.(\lambda  )) = \LL^{\Pp}(X^x_.(\lambda 
+\frac{1}{2}))$  
and from the Lemma~\ref{Martingale-expo-lambda},
$\ZZ_t^{(\lambda ,\lambda +\frac{1}{2})} = \exp(-\frac{\sigma  }{2} \int _0^t
\frac{dB^{\lambda +\frac{1}{2}}_s}{\sqrt {X_s(\lambda  )}} - \frac{\sigma ^2}{8} \int _0^t
\frac{ds}{X_s(\lambda  )})$ is a $\Qq^{\lambda +\frac{1}{2}}$--martingale. 
%%%%%%%%%%%%%%%%%%%%%%%%%%%%%%%%%%%%%%%%%%%%%%%%%%%%%%%
%%%%%%%%%%%%%%%%
%%%%%%%%%%%%%%%%%%%%%%%%%%%%%%%%%%%%%%%%%%%%%%%%%%%%%%%%
The following proposition allows us to compute the derivatives of
$u(t,x)$.
\begin{proposition}\label{justification-cir}
Assume {\rm (H1)}, {\rm (H2)} and {\rm (H3)}. 
Let $g(x)$, $h(x)$ and $k(x)$ be some  bounded $C^1$ functions, with
bounded first derivatives. 
For any $\lambda  \geq 0$, let $v(t,x)$  be the $\er$-valued function defined, on
$[0,T]\times \er_+^*$, by 
\begin{align*} 
v(t,x) = \Ee \left[g(X_t^x(\lambda  )) \exp(\int _0^t k(X_s^x(\lambda  )) ds)\right] 
+ \int _0^t \Ee\left[h(X_s^x(\lambda  )) \exp(\int _0^s k(X_\theta ^x (\lambda  )) d\theta  )\right] ds.
\end{align*} 
Then $v(t,x)$ is of class $C^1$ with
respect to $x$ and
\begin{align*} 
\frac{\partial  v}{\partial  x}(t,x) = 
\Ee \left[\exp(\int _0^t k(X_s^x(\lambda  )) ds)\left(g'(X_t^x(\lambda  ))J^x_t(\lambda  ) +
g(X_t^x(\lambda  )) \int _0^t k'(X_s^x(\lambda  ))J^x_s(\lambda  ) ds\right)\right] \\
 + \int _0^t \Ee\left[\exp(\int _0^s k(X_\theta ^x(\lambda  ) )
d\theta )\left(h'(X_s^x(\lambda  ))J^x_s(\lambda  ) + h(X_s^x(\lambda  ))\int _0^s k'(X_\theta ^x (\lambda  ))J^x_\theta (\lambda  )
 d\theta \right)\right] ds.
\end{align*}
\end{proposition}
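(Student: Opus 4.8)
The formula asserted is exactly the formal differentiation of $v$ under the expectation, combined with the chain rule applied to the flow $x\mapsto X^x_t(\lambda )$ whose derivative is the process $J^x_t(\lambda )$ of~\eqref{derivee-flot}. My plan is therefore split in two stages. First, I would upgrade the almost sure differentiability of the flow (available through Theorem~V.39 of \cite{protter-90}) to differentiability \emph{in quadratic mean}, uniformly in $t\in[0,T]$, which in particular shows $J^x_\cdot(\lambda )\in L^2$. Second, granting this, I would pass the derivative inside both expectations using only the boundedness of $g,g',h,h',k,k'$ together with the $L^2$ convergence of the difference quotients. Hypothesis~{\rm (H3)}, i.e. $\nu=2b(0)/\sigma ^2-1>1$, enters decisively in the first stage.

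For the quadratic mean differentiability, set $U^\varepsilon _t=\varepsilon ^{-1}(X^{x+\varepsilon }_t(\lambda )-X^x_t(\lambda ))$. Subtracting two copies of~\eqref{Xlambda} and applying the mean value theorem to $b$ and to $\sqrt{\cdot\,}$, the process $U^\varepsilon$ solves a linear equation whose drift coefficient tends to $b'(X^x_s(\lambda ))$ and whose diffusion coefficient $\sigma /(\sqrt{X^{x+\varepsilon }_s(\lambda )}+\sqrt{X^x_s(\lambda )})$ tends to $\tfrac{\sigma }{2}(X^x_s(\lambda ))^{-1/2}$, that is, to the coefficients of the linear equation~\eqref{derivee-flot-eds} solved by $J^x(\lambda )$. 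Writing $R^\varepsilon _t=U^\varepsilon _t-J^x_t(\lambda )$ and applying It\^o's formula to $(R^\varepsilon _t)^2$, the diffusion term produces a self-reinforcing contribution carrying the singular weight $(X^x_s(\lambda ))^{-1}$ and an inhomogeneous error that, once squared, is dominated up to bounded factors by $\int_0^t (J^x_s(\lambda ))^2(\sqrt{X^{x+\varepsilon }_s(\lambda )}-\sqrt{X^x_s(\lambda )})^2(X^x_s(\lambda ))^{-1}\,ds$, which vanishes as $\varepsilon \to 0$ by continuity of the flow. The singular weight forbids a naive Gronwall estimate; I would remove it either by the Girsanov change of measure to $\Qq^{\lambda +\frac{1}{2}}$ set up above, under which the $1/\sqrt{X}$ drift is absorbed, or by passing to the inverse and exponential inverse moment bounds of Lemma~\ref{moment-inverse-cir-gene} and~\eqref{aux_0} through H\"older's inequality, both of which require $\nu>1$. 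A Gronwall argument then gives $\sup_{t\leq T}\Ee(R^\varepsilon _t)^2\to 0$, hence $U^\varepsilon _t\to J^x_t(\lambda )$ in $L^2$ uniformly in $t$. This estimate is the technical heart and the main obstacle; its detailed execution belongs to Appendix~\ref{proofs-justification}.

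Granting stage one, I would differentiate $v_1(t,x)=\Ee[g(X^x_t(\lambda ))\exp(\int_0^t k(X^x_s(\lambda ))\,ds)]$ by writing
\begin{align*}
v_1(t,x+\varepsilon )-v_1(t,x) &= \Ee\big[(g(X^{x+\varepsilon }_t(\lambda ))-g(X^x_t(\lambda )))\,e^{\int _0^t k(X^{x+\varepsilon }_s(\lambda ))ds}\big] \\
&\quad + \Ee\big[g(X^x_t(\lambda ))\,(e^{\int _0^t k(X^{x+\varepsilon }_s(\lambda ))ds}-e^{\int _0^t k(X^x_s(\lambda ))ds})\big],
\end{align*}
dividing by $\varepsilon $, and applying the mean value theorem to $g$ in the first bracket and to the exponential and to $k$ in the second. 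The resulting difference quotients are $g'(\zeta ^\varepsilon )U^\varepsilon _t\,e^{\int_0^t k(X^{x+\varepsilon }_s(\lambda ))ds}$ and $g(X^x_t(\lambda ))\,e^{\theta ^\varepsilon }\int_0^t k'(\xi ^\varepsilon _s)U^\varepsilon _s\,ds$, in which every factor other than $U^\varepsilon$ is bounded by $\|g\|_\infty ,\|g'\|_\infty ,\|k'\|_\infty$ or $e^{T\|k\|_\infty }$ and converges almost surely to the corresponding quantity evaluated along $X^x(\lambda )$. Combining the almost sure convergence of the bounded factors with the $L^2$ convergence $U^\varepsilon \to J^x(\lambda )$ (Cauchy--Schwarz for the products, and dominated convergence together with Fubini for the time integral) yields $L^1$ convergence of the difference quotients, so that $\partial _x v_1$ is the announced first line of the formula.

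The integral term $\int_0^t\Ee[h(X^x_s(\lambda ))\exp(\int_0^s k(X^x_\theta (\lambda ))\,d\theta )]\,ds$ is treated identically after exchanging $\int_0^t ds$ and $\Ee$ by Fubini, differentiating the $s$-integrand, and using the uniform in $s\leq t$ $L^2$ bound on $U^\varepsilon$ to justify the final exchange. Finally, the continuity of $x\mapsto \frac{\partial v}{\partial x}(t,x)$ follows from the almost sure continuity of $x\mapsto(X^x_\cdot(\lambda ),J^x_\cdot(\lambda ))$, the uniform $L^2$ bounds obtained above, and dominated convergence; this shows $v\in C^1$ in $x$ and completes the proof. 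As indicated, the one genuinely delicate point is the quadratic mean differentiability of the flow under the singular coefficient $\sqrt{X}$, for which {\rm (H3)} is indispensable.
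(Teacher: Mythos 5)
Your overall architecture does match the paper's: the paper also introduces the difference quotient $J^{x,\epsilon}_t=\epsilon^{-1}(X^{x+\epsilon}_t-X^x_t)$, proves its $L^2$/$L^1$ convergence to $J^x_t$, and then passes to the limit in $\Ee[g(X^{x+\epsilon}_t)-g(X^x_t)]/\epsilon$ by exactly the decomposition you describe (a term controlled by $\|g'\|_\infty\,\Ee|J^{x,\epsilon}_t-J^x_t|$ plus a uniformly integrable term handled by dominated convergence). But there is a genuine gap at what you yourself call the technical heart: neither of your two proposed devices for removing the singular weight $(X^x_s)^{-1}$ works. The Girsanov change of measure cannot help, because the offending term $\frac{\sigma^2}{4}\int_0^t (X^x_s)^{-1}(R^\epsilon_s)^2\,ds$ in the It\^o expansion of $(R^\epsilon_t)^2$ comes from the \emph{quadratic variation} of the diffusion part, not from a drift, and a change of measure leaves quadratic variations untouched; the pleasant identity $\ZZ_t^{(\lambda,\lambda+\frac12)}J^x_t(\lambda)=\exp(\int_0^t b'(X^x_s(\lambda))ds)$, which is what makes Girsanov useful elsewhere in the paper, is an algebraic property of the true derivative flow $J$ and has no analogue for $R^\epsilon=U^\epsilon-J$. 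The H\"older alternative also fails under {\rm (H3)} alone: applying H\"older to $\Ee[(X^x_s)^{-1}(R^\epsilon_s)^2]$ produces higher moments of $R^\epsilon$ (circular), while the weighted variant (It\^o on $(R^\epsilon_t)^2e^{-\gamma(t)}$ with $\gamma(t)=\int_0^t\frac{\sigma^2 ds}{4X^x_s}$, then un-weighting by H\"older) requires $\Ee\exp(q\gamma(T))<\infty$ for some $q>1$, i.e. $q\frac{\sigma^2}{4}\leq\frac{\nu^2\sigma^2}{8}$, i.e. $\nu>\sqrt2$ --- strictly stronger than the $\nu>1$ guaranteed by {\rm (H3)}. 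So "a Gronwall argument then gives" is precisely the step that does not go through.

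The paper's actual resolution is a stochastic time change (borrowed from \cite{berkaoui-bossy-al-07}), which you do not mention and which is the real content of Appendix~\ref{proofs-justification}. One stops at $\tau_\lambda=\inf\{s:\gamma(s)\geq\lambda\}$; Gronwall applied in the time-changed clock $u=\gamma(s)$ gives $\Ee(\Verr^x_{\tau_\lambda})^2\leq C_0T\sqrt{\epsilon}\,e^{C_1\lambda}$, while the Markov inequality with exponent $\tfrac12$ and the exponential inverse moment bound~\eqref{aux_0} give $\Pp(\tau_\lambda<T)\leq C e^{-\lambda/2}$ --- note the factor $\tfrac12$ is what makes $\nu\geq1$ suffice. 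Choosing $\lambda=-\log(\epsilon^r)$ with $r=(4C_1)^{-1}$ balances the exploding Gronwall factor against the exponentially small exit probability and yields a bound of order $\epsilon^{1/4}$. A secondary omission: your claim that the inhomogeneous error term "vanishes by continuity of the flow" needs domination, not just a.s. convergence; the paper secures this through moment estimates on $J^{x,\epsilon}$ of the form $\Ee[(J^{x,\epsilon}_t)^\alpha (X^x_t)^{-\frac{\alpha}{2}-\beta}]\leq C$, obtained via the identity $\int_0^t\frac{\sigma\,dW_s}{2\sqrt{X^x_s}}=\frac12\log(X^x_t/x)-\int_0^t\frac{b(X^x_s)}{2X^x_s}ds$, H\"older, and Lemma~\ref{moment-inverse-cir-gene}; this is another place where {\rm (H3)} enters and which your sketch leaves unaddressed.
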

The proof is postponed in the Appendix~\ref{proofs-justification}.

\begin{proof}[Proof of Proposition~\ref{proposition_edp_alpha=1/2}]
First, we note that $u(t,x)=\Ee f(X_{T-t}^x)$ is a continuous function 
in $x$ and bounded  by $\|f\|_\infty  $.  
Let us show that $u$ is in $C^{1,4}([0,T]\times [0,+\infty ))$. 
%% derivee en temps
$f$ being in $C^4(\er)$, by the It\^o's formula, 
\begin{align*}
u(t,x)
&= f(x)+\int _0^{T-t} \Ee \left( b(X_s^{x})f'(X_s^{x})\right) ds
+ \frac{\sigma ^2}{2}\int _0^{T-t}\Ee\left( X_s^{x} f''(X_s^{x})\right) ds \\
&+ \sigma \Ee\left( \int _0^{T-t} \sqrt {X_s^{x}} f'(X_s^{x})dW_s\right). 
\end{align*} 
$f'$ is bounded and $(X^x_t)$ have moments of any order. Then  we
obtain that
\begin{align*} 
\frac{\partial  u}{\partial 
t}(t,x) = - \Ee \left(b(X_{T-t}^{x}) f'(X_{T-t}^{x})
+\frac{\sigma ^2}{2} X_{T-t}^{x} f''(X_{T-t}^{x})\right). 
\end{align*} 
Hence, $\frac{\partial  u}{\partial t}$ is a continuous function on $[0,T]\times [0,+\infty )$
and~\eqref{borne_u_alpha=1/2} follows by 
Lemma~\ref{majo-moments-oX}. 
%% first derivative %%%%%%%%%%%%%%%%%%%%%%%%%%%%%%%%%%%%%%%%%%%%%%%%%%
%%%%%%%%%%%%%%%%%%%%%%%%%%%%%%%%%%%%%%%%%%%%%%%%%%%%%%%%%%%%%%%%%%%%%%

By Proposition~\ref{justification-cir}, for $x>0$, 
$u(t,x)= \Ee f(X_{T-t}^x)$ is differentiable  and 
\begin{align*} 
\frac{\partial  u}{\partial  x}(t,x) =
\Ee\left( f'(X_{T-t}^x(0))J_{T-t}^x(0) \right). 
\end{align*} 
Hence, by using the Lemma~\ref{Martingale-expo-lambda},
$\left|\frac{\partial  u}{\partial x}(t,x)\right|\leq \|f'\|_\infty 
\Ee \left( J_{T-t}^x(0) \right) \leq  C \|f'\|_\infty $. 
%% second derivative %%%%%%%%%%%%%%%%%%%%%%%%%%%%%%%%%%%%%%%%%%%%%%%%%%
%%%%%%%%%%%%%%%%%%%%%%%%%%%%%%%%%%%%%%%%%%%%%%%%%%%%%%%%%%%%%%%%%%%%%%%
We introduce the probability $\Qq^\frac{1}{2}$ such that
$\frac{ d\Qq^\frac{1}{2}}{ d\Pp}\bigg|_{\FF_t}
=\frac{1}{\ZZ_t^{(0,\frac{1}{2})}}$. Denoting by $\Ee^\frac{1}{2}$ the
expectation under the probability 
$\Qq^\frac{1}{2}$, we have
\begin{align*} 
\frac{\partial  u}{\partial  x}(t,x) = \Ee^\frac{1}{2}
\left(f'(X_{T-t}^x(0))\ZZ_{T-t}^{(0,\frac{1}{2})}J_{T-t}^x(0)
\right).
\end{align*}	
From~\eqref{derivee-flot}, as 
$W_t = B^{\frac{1}{2}}_t +\int _0^t \frac{\sigma }{2\sqrt {X^x_s(0)}} ds$, we notice
that 
\begin{align*} 
J^x_t(0) = \exp\left( \int _0^t b'(X_s^x(0) ds 
+ \frac{\sigma  }{2}\int _0^t \frac{dB^{\frac{1}{2}}_s}{\sqrt {X^x_s(0)}}
+ \frac{\sigma ^2}{8} \int _0^t \frac{ds}{X^x_s(0)}\right) 
\end{align*} 
and that $\ZZ_{T-t}^{(0,\frac{1}{2})}J_{T-t}^x(0) = \exp\left(\int _0^{T-t}
b'(X^x_s(0)) ds \right)$, from the definition of
$\ZZ_t^{(0,\frac{1}{2})}$ in \eqref{def_Z}. Hence, 
$\frac{\partial  u}{\partial  x}(t,x) = \Ee^\frac{1}{2}
\left[f'(X_{T-t}^x(0)) \exp\left(\int _0^{T-t} b'(X^x_s(0)) ds
\right)\right]$. 
As $\LL^{\Qq^{\frac{1}{2}}}(X_\cdot^x(0)) =
\LL^{\Pp}(X^x_\cdot(\frac{1}{2}))$, for $x>0$, we
finally obtain the following expression for $\frac{\partial  u}{\partial  x}(t,x)$: 
\begin{align}\label{new-expression-derivee-1-en-x}
\frac{\partial  u}{\partial  x}(t,x) = \Ee\left[ f'(X_{T-t}^x(\frac{1}{2}))
\exp\left(\int _0^{T-t} b'(X^x_s(\frac{1}{2})) ds\right)\right].
\end{align}
Now the right-hand side of \eqref{new-expression-derivee-1-en-x} is a
continuous function on $[0,T]\times  [0,+\infty )$ so that $u\in C^{1,1}([0,T]\times 
[0,+\infty ))$. Moreover for $x>0$, by  Proposition~\ref{justification-cir},  $\frac{\partial  u}{\partial  x}(t,x)$ is continuously differentiable  and 
\begin{align}\label{expression-derivee-2-en-x}
\begin{array}{r}
\frac{\partial  ^2u}{\partial  x^2}(t,x) = 
\Ee\left[f''(X_{T-t}^x(\frac{1}{2}))J_{T-t}^x(\frac{1}{2})\exp\left(\int _0^{T-t}
b'(X^x_s(\frac{1}{2})) ds\right)\right] \\
 + \Ee\left[f'(X_{T-t}^x(\frac{1}{2}))
\exp\left(\int _0^{T-t} b'(X^x_s(\frac{1}{2})) ds\right)
\left(\int _0^{T-t} b''(X^x_s(\frac{1}{2})) J_{s}^x(\frac{1}{2})
ds\right)\right].
\end{array}
\end{align}
As previously, we can conclude that $\left| \frac{\partial  ^2u}{\partial  x^2}(t,x)\right|$ is bounded
uniformly in $x$. In order to
obtain an expression for $\frac{\partial  ^2u}{\partial  x^2}(t,x)$ continuous in
$[0,T]\times [0,+\infty )$ and also to compute the third derivative, 
we need to transform the expression in~\eqref{expression-derivee-2-en-x} in order to 
avoid again the appearance of the derivative of $J^x_t(\frac{1}{2})$ that we 
do not control. Thanks to the Markov property and  the time homogeneity
 of the process
$X^x_t(\frac{1}{2}))$, for any $s \in [0, T-t]$, 
\begin{align*}
& \Ee\left. \left[ f'(X_{T-t}^x(\frac{1}{2})) \exp\left(\int _s^{T-t}
b'(X_u^x(\frac{1}{2}))\right)  \right/ \FF_s\right] \\
& = \Ee \left[ f'(X_{T-t-s}^{y}(\frac{1}{2})) \exp\left(\int _0^{T-t-s}
b'(X_u^{y}(\frac{1}{2}))\right)\right]\bigg|_{y =
X^x_s(\frac{1}{2}))}. 
\end{align*}
By using~\eqref{new-expression-derivee-1-en-x}, we get
$\Ee [ f'(X_{T-t}^x(\frac{1}{2})) \exp(\int _s^{T-t}
b'(X_u^x(\frac{1}{2}))) / \FF_s]
= \frac{\partial  u}{\partial  x}(t+s,X^x_s(\frac{1}{2})))$.
We introduce this last equality in the second term of the right-hand
side of~\eqref{expression-derivee-2-en-x}: 
\begin{align*}
& \Ee\left[f'(X_{T-t}^x(\frac{1}{2}))
\exp\left(\int _0^{T-t} b'(X^x_u(\frac{1}{2})) du\right)
\left(\int _0^{T-t} b''(X^x_s(\frac{1}{2})) J_{s}^x(\frac{1}{2})
ds\right)\right] \\
& = \Ee \left[ \int _0^{T-t}
\Ee \left. \left(f'(X_{T-t}^x(\frac{1}{2}))\exp\left(\int _s^{T-t}
b'(X^x_u(\frac{1}{2})) du\right) \right/ \FF_s\right) \right.\\
& \hspace{3cm}\left. 
\times  \exp\left(\int _0^{s} b'(X^x_u(\frac{1}{2})) du\right)
b''(X^x_s(\frac{1}{2}))J_{s}^x(\frac{1}{2})ds \right] \\
& =  \int _0^{T-t}\Ee \left[
\frac{\partial  u}{\partial  x}(t+s,X^x_s(\frac{1}{2}))) 
\exp\left(\int _0^{s} b'(X^x_u(\frac{1}{2})) du\right)
b''(X^x_s(\frac{1}{2}))J_{s}^x(\frac{1}{2})\right] ds. 
\end{align*}
Coming back to~\eqref{expression-derivee-2-en-x}, this leads to the
following expression  for $\frac{\partial  ^2u}{\partial  x^2}(t,x)$:
\begin{align*}
\frac{\partial  ^2u}{\partial  x^2}(t,x) = 
\Ee\left[f''(X_{T-t}^x(\frac{1}{2}))J_{T-t}^x(\frac{1}{2})\exp\left(\int _0^{T-t}
b'(X^x_s(\frac{1}{2})) ds\right)\right] \\
 + \int _0^{T-t}\Ee \left[
\frac{\partial  u}{\partial  x}(t+s,X^x_s(\frac{1}{2}))) 
\exp\left(\int _0^{s} b'(X^x_u(\frac{1}{2})) du\right)
b''(X^x_s(\frac{1}{2}))J_{s}^x(\frac{1}{2})\right] ds. 
\end{align*}
We introduce the probability $\Qq^1$ such that
$\frac{ d\Qq^1}{
d\Pp}\bigg|_{\FF_t}=\frac{1}{\ZZ_t^{(\frac{1}{2},1)}}$. Then,
\begin{align*}
\frac{\partial  ^2u}{\partial  x^2}(t,x) = 
\Ee^1\left[\ZZ_{T-t}^{(\frac{1}{2},1)}f''(X_{T-t}^x(\frac{1}{2}))J_{T-t}^x(\frac{1}{2})\exp\left(\int _0^{T-t}
b'(X^x_s(\frac{1}{2})) ds\right)\right] \\
 + \int _0^{T-t}\Ee^1 \left[\ZZ_s^{(\frac{1}{2},1)}
\frac{\partial  u}{\partial  x}(t+s,X^x_s(\frac{1}{2}))) 
\exp\left(\int _0^{s} b'(X^x_u(\frac{1}{2})) du\right)
b''(X^x_s(\frac{1}{2}))J_{s}^x(\frac{1}{2})\right] ds. 
\end{align*}
Again for all $\theta \in [0,T]$, we have that
$\ZZ_\theta  ^{(\frac{1}{2},1)}J_{\theta  }^x(\frac{1}{2}) = \exp\left(\int _0^{\theta  }
b'(X^x_u(\frac{1}{2})) du\right)$ 
and
\begin{align*}
\frac{\partial  ^2u}{\partial  x^2}(t,x) = & 
\Ee^1\left[f''(X_{T-t}^x(\frac{1}{2}))\exp\left(2\int _0^{T-t}
b'(X^x_s(\frac{1}{2})) ds\right)\right] \\
& + \int _0^{T-t}\Ee^1 \left[
\frac{\partial  u}{\partial  x}(t+s,X^x_s(\frac{1}{2}))) 
\exp\left(2\int _0^{s} b'(X^x_u(\frac{1}{2})) du\right)
b''(X^x_s(\frac{1}{2}))\right] ds. 
\end{align*}
As $\LL^{\Qq^{1}}(X^x_.(\frac{1}{2})) = \LL^{\Pp}(X^x_.(1))$, we
finally obtain the following expression for $\frac{\partial  ^2u}{\partial  x^2}(t,x)$: 
\begin{align}\label{new-expression-derivee-2-en-x}
\begin{array}{ll}
\frac{\partial  ^2u}{\partial  x^2}(t,x)=& 
\Ee\left[f''(X_{T-t}^x(1))\exp\left(2\int _0^{T-t}
b'(X^x_s(1)) ds\right)\right] \\
&+ \int _0^{T-t}\Ee\left[
\frac{\partial  u}{\partial  x}(t+s,X^x_s(1))) 
\exp\left(2\int _0^{s} b'(X^x_u(1)) du\right)
b''(X^x_s(1))\right] ds 
\end{array}
\end{align}
from which, we deduce that $u\in C^{1,2}([0,T]\times  [0,+\infty ))$.  
As $J^x_s(1)= \frac{dX^x_s(1)}{dx}$ exists and is given
by~\eqref{derivee-flot}, for $x>0$, 
$\frac{\partial ^2u}{\partial  x^2}(t,x)$ is continuously differentiable (see again
Proposition~\ref{justification-cir}) and
\begin{align}\label{expression-derivee-3-en-x}
\begin{array}{ll}
\frac{\partial  ^3u}{\partial  x^3}(t,x) = &
\Ee\left\{\exp\left(2\int _0^{T-t}b'(X^x_s(1)) ds\right)\right.  \\
& \hspace{0.6cm}\left. \times  \left[ f^{(3)}(X^x_{T-t}(1))J^x_{T-t}(1) +
2f''(X^x_{T-t}(1)) \int _0^{T-t} b''(X^x_s(1)) J^x_s(1)ds \right]\right\} \\
& + \int _0^{T-t}\Ee\left\{
\exp\left(2\int _0^s b'(X^x_u(1)du\right)\right. \\
& \hspace{0.6cm}\left. \times  \left[ J^x_s(1)
\left(\frac{\partial  ^2u}{\partial  x^2}(t+s,X^x_s(1))) b''(X^x_s(1)) +
\frac{\partial  u}{\partial  x}(t+s,X^x_s(1))) b^{(3)}(X^x_s(1))\right)
\right. \right. \\
& \left. \left. \hspace{2cm}
+ 2 \frac{\partial  u}{\partial  x}(t+s,X^x_s(1)))b''(X^x_s(1))\int _0^s
b''(X^x_u(1))J^x_u(1) du\right]\right\}ds.
\end{array}	
\end{align}
By Lemma~\ref{Martingale-expo-lambda}, the derivatives of $f$ and $b$
being bounded up to the order 3, we get immediately that $|\frac{\partial 
^3u}{\partial  x^3}(t,x)|\leq C$ uniformly in $x$.

The computation of the fourth  derivative uses similar arguments. 
We detail it in the Appendix~\ref{end_proof_deriv4}. 

In view of~\eqref{borne_u_alpha=1/2}
and~\eqref{borne_deriv_u_alpha=1/2}, one can  adapt easily the proof
of the Theorem~6.1 in
\cite{friedman-75} and show that $u(t,x)$ solves the
Cauchy problem~\eqref{edp_u_alpha=1/2}. 
\end{proof}

%%%%%%%%%%%%%%%%%%%%%%%%%%%%%%%%%%%%%%%%%%%%%%%%%%%%%%%%%%%%%%%%%%%%%%%%%%%%
%%%%%%%%%%%%%%%%%%%%%%%%%%%%%%%%%%%%%%%%%%%%%%%%%%%%%%%%%%%%%%%%%%%%%%%%%%%%
\subsubsection{On the approximation process}
%%%%%%%%%%%%%%%%%%%%%%%%%%%%%%%%%%%%%%%%%%%%%%%%%%%%%%%%%%%%%%%%%%%%%%%%%%%%
%%%%%%%%%%%%%%%%%%%%%%%%%%%%%%%%%%%%%%%%%%%%%%%%%%%%%%%%%%%%%%%%%%%%%%%%%%%%
According to~\eqref{schema} and~\eqref{schema-continu-bis},
the discrete time process $(\oX)$ associated to $(X)$ is 
\begin{align}\label{schema-CIR-gene}
\left\{
\begin{array}{l}
\oX_0 = x_0,\\
\oX_{t_{k+1}}=\left| \oX_{t_k} + b(\oX_{t_k})\Dt +
\sigma  \sqrt {\oX_{t_k}} (W_{t_{k+1}}-W_{t_k})\right|,
\end{array} \right.
\end{align} 
$k=0,\ldots ,N-1$, and the time continuous version $(\oX_{t},0\leq t\leq T)$ satisfies 
\begin{align}\label{schema-continu-bis-CIR-gene}
\oX_t =  x_0 + \int _0^t \sgn(\oZ_s) b(\oX_{\eta (s)})ds
+ \sigma  \int _0^t \sgn(\oZ_s)\sqrt {\oX_{\eta (s)}} dW_s + \frac{1}{2}L^0_t(\oX), 
\end{align}  
where $\sgn(x) = 1-2~\ind_{(x\leq 0)}$, and for any $t\in [0,T]$, 
\begin{align}\label{Z_t-CIR-gene}
\oZ_t = \oX_{\eta (t)} + (t-\eta (t))b(\oX_{\eta (t)})
+ \sigma \sqrt {\oX_{\eta (t)}} (W_t-W_{\eta (t)}). 
\end{align} 
In this section, we are interested in the behavior of the processes
$(\oX)$ and $(\oZ)$ visiting the point $0$. The main result is the
following
\begin{proposition}\label{proposition-schema-CIR-gene-tps-local}
Let $\alpha  = \frac{1}{2}$.
Assume {\rm (H1)}. For $\Dt$ sufficiently  small ($\Dt \leq 1/ (2K)$),
there exists a constant $C >0$, depending on $b(0)$, $K$, $\sigma $, $x_0$
and $T$ but not in $\Dt$, such that 
\begin{align}  \label{UP-local-time}
\Ee\left.\left(L^0_t(\oX) -L^0_{\eta (t)}(\oX)\right/\FF_{\eta (t)}\right) &\leq
C \Dt \exp\left(- \frac{\oX_{\eta (t)}}{16\sigma  ^2\Dt}\right) \nonumber \\ 
\mbox{and } ~
\Ee L^0_T(\oX) &\leq
C \left(\frac{\Delta  t}{x_0}\right)^{\frac{b(0)}{\sigma  ^2}}. 
\end{align} 
\end{proposition}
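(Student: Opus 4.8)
The plan is to localise the estimate to a single subinterval $[\eta(t),t]$, on which the coefficients of \eqref{schema-continu-bis-CIR-gene} are frozen, and then to sum over the grid. Conditionally on $\FF_{\eta(t)}$, the process $\oZ$ of \eqref{Z_t-CIR-gene} is on $[\eta(t),t]$ a Brownian motion with constant drift $b(\oX_{\eta(t)})$ and constant diffusion coefficient $\sigma\sqrt{\oX_{\eta(t)}}$, started at $\oZ_{\eta(t)}=\oX_{\eta(t)}\ge0$, and $\oX=|\oZ|$. Taking $\Ee(\,\cdot\,/\FF_{\eta(t)})$ in \eqref{schema-continu-bis-CIR-gene} restricted to $[\eta(t),t]$ kills the stochastic integral, and the occupation times formula identifies the expected increment of local time with a time integral of the Gaussian transition density of $\oZ$ at $0$: with $a=\oX_{\eta(t)}$ and $s'=s-\eta(t)$,
\begin{equation*}
\Ee\left.\left(L^0_t(\oX)-L^0_{\eta(t)}(\oX)\right/\FF_{\eta(t)}\right)\le C\,\sigma^2 a\int_0^{t-\eta(t)}\frac{1}{\sqrt{2\pi\sigma^2 a\,s'}}\exp\left(-\frac{(a+b(a)s')^2}{2\sigma^2 a\,s'}\right)ds'.
\end{equation*}

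The first inequality then follows from a Gaussian estimate, and this is exactly where $\Dt\le1/(2K)$ enters. By {\rm (H1)}, $b(a)\ge b(0)-Ka$, hence $a+b(a)s'\ge a+(b(0)-Ka)s'\ge a(1-K\Dt)\ge a/2$ for every $s'\in[0,\Dt]$ (using $b(0)>0$); therefore $(a+b(a)s')^2\ge a^2/4$ and the integrand is at most $C(\sigma^2 a\,s')^{-1/2}\exp(-a/(8\sigma^2 s'))$. Integrating in $s'$ over $[0,t-\eta(t)]\subseteq[0,\Dt]$, and bounding $\exp(-a/(8\sigma^2 s'))\le\exp(-a/(8\sigma^2\Dt))$, gives the bound $C\sqrt{a\Dt}\,\exp(-a/(8\sigma^2\Dt))$. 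Finally $\sqrt{a\Dt}\,\exp(-a/(8\sigma^2\Dt))\le C\,\Dt\,\exp(-a/(16\sigma^2\Dt))$, since $z\mapsto\sqrt z\,\exp(-z/(16\sigma^2))$ is bounded on $\er_+$; the constant $16=2\cdot8$ is precisely the cost of absorbing the square root into half of the available decay, which explains the value appearing in the statement.

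For the global bound, summing the first inequality over $k$ by the tower property gives, with $\lambda=1/(16\sigma^2\Dt)$,
\begin{equation*}
\Ee L^0_T(\oX)=\sum_{k=0}^{N-1}\Ee\left[\Ee\left.\left(L^0_{t_{k+1}}(\oX)-L^0_{t_k}(\oX)\right/\FF_{t_k}\right)\right]\le C\,\Dt\sum_{k=0}^{N-1}\Ee\left[\exp(-\lambda\oX_{t_k})\right],
\end{equation*}
so it remains to control the negative exponential moments $\Ee[\exp(-\lambda\oX_{t_k})]$. Conditioning on $\FF_{t_k}$ and using $\oX_{t_{k+1}}=|\oZ_{t_{k+1}}|\ge\oZ_{t_{k+1}}$ together with the Laplace transform of the Gaussian law of $\oZ_{t_{k+1}}$ yields the one-step estimate
\begin{equation*}
\Ee\left[\exp(-\lambda\oX_{t_{k+1}})\,\big/\,\FF_{t_k}\right]\le\exp(-\lambda b(0)\Dt)\,\exp(-m(\lambda)\oX_{t_k}),\qquad m(\lambda)=\lambda\Big(1-\tfrac{\sigma^2\Dt}{2}\lambda-K\Dt\Big),
\end{equation*}
where {\rm (H1)} has been used on the drift. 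The map $m$ is one explicit Euler step of the Riccati equation $\dot\mu=-K\mu-\tfrac{\sigma^2}{2}\mu^2$ governing the Laplace transform of the CIR--like comparison process $Y$ of Lemma~\ref{moment-inverse-cir-gene}; for $\Dt\le1/(2K)$ and $\lambda=1/(16\sigma^2\Dt)$ its iterates $\mu_j=m^{\,j}(\lambda)$ stay positive and decrease. Iterating down to $\oX_0=x_0$ gives $\Ee[\exp(-\lambda\oX_{t_n})]\le\exp(-b(0)\Dt\sum_{j=0}^{n-1}\mu_j-\mu_n x_0)$, i.e. a bound by the discrete analogue of the CIR Laplace transform at $\lambda$; summing over $n$ and exploiting the Feller-type exponent $b(0)/\sigma^2$ produces the announced factor $(\Dt/x_0)^{b(0)/\sigma^2}$.

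I expect the global bound to be the main obstacle. The first inequality and its one-step proof are explicit computations, whereas turning the summed recursion $\sum_n\exp(-b(0)\Dt\sum_{j<n}\mu_j-\mu_n x_0)$ into the sharp power $(\Dt/x_0)^{b(0)/\sigma^2}$ requires a quantitative analysis of the Riccati iterates $\mu_j$ — their rapid descent from $\mu_0=\lambda\sim\Dt^{-1}$ followed by the slow $e^{-Kt}$ regime — together with careful tracking of the $x_0$-dependence carried by $\mu_n x_0$, the early indices $t_k\lesssim x_0$ being suppressed by this term. The condition $\Dt\le1/(2K)$ is used throughout, both to secure $a+b(a)s'\ge a/2$ and to keep the iterates $\mu_j$ positive.
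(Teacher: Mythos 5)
Your treatment of the first inequality is correct and coincides with the paper's own argument: the same occupation-time/Gaussian-density representation of the conditional local-time increment, the same use of {\rm (H1)} and $\Dt\le 1/(2K)$ to force $(a+b(a)s')^2\ge a^2/4$, and the same splitting of the exponential decay to absorb $\sqrt{a\Dt}$ into $C\,\Dt\exp\left(-a/(16\sigma^2\Dt)\right)$. Your strategy for the second inequality is also the paper's strategy: sum by the tower property, then control $\sup_k\Ee\exp\left(-\oX_{t_k}/(16\sigma^2\Dt)\right)$ through the one-step Laplace-transform recursion $\Ee\left[\exp(-\lambda\oX_{t_{k+1}})\,|\,\FF_{t_k}\right]\le\exp(-\lambda b(0)\Dt)\exp(-m(\lambda)\oX_{t_k})$ with $m(\lambda)=\lambda\left(1-\tfrac{\sigma^2\Dt}{2}\lambda-K\Dt\right)$, iterated down to $x_0$. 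This is precisely the content of the paper's Lemma~\ref{lemma-aux-CIR-gene}, with your $\lambda=1/(16\sigma^2\Dt)$ corresponding to the lemma's $\gamma=16$.

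However, the second part has a genuine gap, which you acknowledge yourself: the passage from the iterated bound $\Ee\exp(-\lambda\oX_{t_n})\le\exp\left(-b(0)\Dt\sum_{j<n}\mu_j-\mu_n x_0\right)$ to the power $(\Dt/x_0)^{b(0)/\sigma^2}$ is asserted ("exploiting the Feller-type exponent"), not proved, and it is the technical heart of the result. What is missing, and what the paper actually establishes, is: (i) a two-sided control of the iterates $\mu_j=m^{\,j}(\lambda)$, namely the upper bound $\mu_j\le\mu_0/\left(1+\tfrac{\sigma^2}{2}j\Dt\mu_0\right)$ obtained by composing the maps $x\mapsto x/(1+\alpha x)$, and from it, by induction, the lower bound $\mu_j\ge\frac{2\gamma-1}{\Dt\gamma\sigma^2(2\gamma-1+j)}-\frac{2K}{\sigma^2}$; (ii) consequently $\sum_{j<k}\Dt\mu_j\ge\frac{2\gamma-1}{\gamma\sigma^2}\ln\frac{2\gamma-1+k}{2\gamma-1}-\frac{2KT}{\sigma^2}$, which converts $\exp\left(-b(0)\Dt\sum_{j<k}\mu_j\right)$ into the power $\left(\frac{2\gamma-1}{2\gamma-1+k}\right)^{\frac{2b(0)}{\sigma^2}(1-\frac{1}{2\gamma})}$; and (iii) the elementary inequality $x^\beta e^{-x}\le\beta^\beta e^{-\beta}$ applied with $x=\frac{x_0}{\Dt\gamma\sigma^2}\,\frac{2\gamma-1}{2\gamma-1+k}$, which merges that power with the surviving factor $\exp(-\mu_k x_0)$ into a bound $C(\Dt/x_0)^{\frac{2b(0)}{\sigma^2}(1-\frac{1}{2\gamma})}$ that is \emph{uniform in} $k$; since this exponent exceeds $b(0)/\sigma^2$ for $\gamma=16$ and there are $N=T/\Dt$ terms each carrying a factor $C\Dt$, the claimed rate follows (this last reduction also uses $\Dt/x_0\le1$, which is why the paper's lemma assumes $\Dt\le1/(2K)\wedge x_0$). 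Without (i)--(iii), nothing quantitative survives of the recursion: using only positivity and monotonicity of the $\mu_j$ loses the logarithmic divergence of $\sum_j\Dt\mu_j$ and yields no power of $\Dt$ at all. So your proposal is incomplete exactly where you predicted the main obstacle to be.
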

The  upper bounds above, for the local time $(L^0_\cdot  (\oX))$,  are based on the
following technical lemmas:
\begin{lemma}\label{lemma-aux-CIR-gene}
Assume {\rm (H1)}. Assume also that $\Dt$ is
sufficiently  small ($\Dt \leq 1/ (2K)\wedge  x_0)$. Then  for any
$\gamma  \geq 1$, there exists a positive constant $C$,  
depending on all the parameters  $b(0)$, $K$, $\sigma $, $x_0$, $T$ and
also on $\gamma $, such that 
\begin{align*} 
\sup_{k=0,\ldots , N} \Ee\exp\left(-\frac{\oX_{t_k}}{\gamma  \sigma  ^2\Delta 
t}\right)\leq C
\left(\frac{\Dt}{x_0}\right)^{\frac{2b(0)}{\sigma  ^2}(1 - \frac{1}{2\gamma })}.
\end{align*} 
\end{lemma}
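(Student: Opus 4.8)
The plan is to reduce the statement to a one--step conditional estimate for the Gaussian increment of the scheme, and then to iterate that estimate backwards over a \emph{decreasing} sequence of exponents, whose slow (algebraic) decay is exactly what produces a power of $\Dt$.

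First I would treat one step. Writing $\oZ_{t_{k+1}} = \oX_{t_k} + b(\oX_{t_k})\Dt + \sigma\sqrt{\oX_{t_k}}\,(W_{t_{k+1}}-W_{t_k})$, the reflection gives $\oX_{t_{k+1}} = |\oZ_{t_{k+1}}| \ge \oZ_{t_{k+1}}$, so $\exp(-\oX_{t_{k+1}}/(\gamma\sigma^2\Dt)) \le \exp(-\oZ_{t_{k+1}}/(\gamma\sigma^2\Dt))$. Conditionally on $\FF_{t_k}$, $\oZ_{t_{k+1}}$ is Gaussian with mean $\oX_{t_k}+b(\oX_{t_k})\Dt$ and variance $\sigma^2\oX_{t_k}\Dt$, so its Laplace transform is explicit. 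Since iteration will force the exponent to vary, I carry out the computation for a general $p\in(0,1]$ and obtain
\[
\Ee\Big[\exp\big(-\tfrac{p\,\oX_{t_{k+1}}}{\gamma\sigma^2\Dt}\big)\,\Big|\,\FF_{t_k}\Big] \le \exp\Big(-\tfrac{p\,b(\oX_{t_k})}{\gamma\sigma^2}\Big)\exp\Big(-\tfrac{\oX_{t_k}}{\gamma\sigma^2\Dt}\,p\big(1-\tfrac{p}{2\gamma}\big)\Big).
\]
Invoking {\rm (H1)} in the form $b(x)\ge b(0)-Kx$ replaces $b(\oX_{t_k})$ at the cost of a factor $\exp(pK\oX_{t_k}/(\gamma\sigma^2))$, which I absorb into the $\oX_{t_k}$--exponential; using $\Dt\le 1/(2K)$ and $\gamma\ge 1$ (so that $1-\tfrac{p}{2\gamma}-K\Dt\ge 0$) and taking expectations yields the clean recursion
\[
\Ee\Big[\exp\big(-\tfrac{p\,\oX_{t_{k+1}}}{\gamma\sigma^2\Dt}\big)\Big] \le e^{-p\,b(0)/(\gamma\sigma^2)}\,\Ee\Big[\exp\big(-\tfrac{G(p)\,\oX_{t_k}}{\gamma\sigma^2\Dt}\big)\Big],\qquad G(p)=p\big(1-\tfrac{p}{2\gamma}-K\Dt\big).
\]

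The second step iterates this backwards. Setting $p_0=1$, $p_{j+1}=G(p_j)$ and applying the recursion down to time $0$, where $\oX_0=x_0$ is deterministic, gives for every $k$
\[
\Ee\Big[\exp\big(-\tfrac{\oX_{t_k}}{\gamma\sigma^2\Dt}\big)\Big] \le \exp\Big(-\tfrac{b(0)}{\gamma\sigma^2}\sum_{j=0}^{k-1}p_j\Big)\,\exp\Big(-\tfrac{p_k\,x_0}{\gamma\sigma^2\Dt}\Big).
\]
Two factors compete: for small $k$ the exponents $p_k$ are close to $1$ and the second factor is super--exponentially small in $\Dt$, while for large $k$ the accumulated first factor carries the decay. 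The decisive feature is that $(p_j)$ decays \emph{algebraically}, not geometrically. This also explains why a cruder route fails: taking expectations at $p=1$ and applying Jensen would give only $\phi_{k+1}\le e^{-b(0)/(\gamma\sigma^2)}\phi_k^{\beta'}$ with $\beta'=1-\tfrac{1}{2\gamma}-K\Dt$, a recursion linear in $\log\phi_k$ whose geometric series converges and yields merely a \emph{constant} bound; the logarithm needed for a power of $\Dt$ is lost.

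The third and hardest step is the analysis of $(p_j)$ and of the partial sums $\sum_{j<k}p_j$. Passing to $q_j=1/p_j$ and using $1/(1-x)\ge 1+x$ gives $q_{j+1}\ge q_j+\tfrac{1}{2\gamma}$, hence $p_j\lesssim 2\gamma/(2\gamma+j)$; a matching lower bound on $p_j$, controlling the $K\Dt\,q_j$ correction over the $N=T/\Dt$ steps by the bounded factor $(1+cK\Dt)^{T/\Dt}$, yields $p_j\gtrsim c/j$. Thus $\sum_{j<k}p_j$ behaves like a harmonic sum $\sim 2\gamma\log k$, converting the would--be geometric decay into a genuine logarithm. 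Finally I would balance the two competing factors by summing over $k$, or choosing $k$ near $x_0/(b(0)\Dt)$, to produce the bound $C(\Dt/x_0)^{\theta}$; the stated exponent $\theta=\tfrac{2b(0)}{\sigma^2}(1-\tfrac{1}{2\gamma})$ emerges from the constant in the harmonic asymptotics together with the losses absorbed in the $\beta'=1-\tfrac{1}{2\gamma}-K\Dt$ denominators. I expect the uniform--in--$\Dt$ control of this nonlinear recursion and of the balance between the two exponential factors to be the main obstacle, the one--step Gaussian estimate being routine.
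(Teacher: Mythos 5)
Your plan is, up to the normalization $p_j=\gamma\sigma^2\Dt\,\mu_j$, the paper's own proof: your one-step Gaussian estimate and the recursion $p_{j+1}=p_j\bigl(1-\tfrac{p_j}{2\gamma}-K\Dt\bigr)$ are exactly the paper's sequence \eqref{sequence}, your backward iteration is the paper's inequality \eqref{aux_1}, and your ``balancing'' is the paper's pointwise-in-$k$ use of $x^\alpha e^{-x}\le \alpha^\alpha e^{-\alpha}$. (Minor point: since the statement is a supremum over $k$, you must bound \emph{every} $k$ this way; ``summing over $k$'' is not the right operation, though your balance-point intuition is correct.) Your observations about why the nonlinear, algebraically decaying exponent sequence is indispensable are also accurate.

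The genuine gap is in your third step, and it is quantitative but fatal for the stated exponent. From the iterated bound $\exp\bigl(-\tfrac{b(0)}{\gamma\sigma^2}\sum_{j<k}p_j\bigr)\exp\bigl(-\tfrac{p_k x_0}{\gamma\sigma^2\Dt}\bigr)$, the power of $\Dt$ you finally obtain is $\tfrac{b(0)}{\gamma\sigma^2}\rho$, where $\rho$ is the constant in the lower bound $\sum_{j<k}p_j\ge\rho\ln(\mathrm{const}+k)-\mathrm{const}$; the lemma requires $\rho=2\gamma-1$. Your route delivers less on two counts. First, you only claim ``$p_j\gtrsim c/j$'' with an unspecified $c$, and $c$ \emph{becomes} the exponent: any $c<2\gamma-1$ proves a strictly weaker power (your derivation via $1/(1-x)\ge 1+x$ gives the constant $2\gamma$ for the \emph{upper} bound on $p_j$, which is not what enters the exponent; the lower bound needs the refined inequality $\tfrac{1}{1-x}\le 1+\tfrac{x}{1-1/(2\gamma)}$ for $x=p_j/(2\gamma)$, which is where $2\gamma-1$ comes from). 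Second, and more structurally, you control the $K\Dt$ correction \emph{multiplicatively}, via the factor $(1+cK\Dt)^{T/\Dt}\le e^{cKT}$ on $q_j=1/p_j$: this degrades the log-coefficient to roughly $e^{-cKT}(2\gamma-1)$, and since that coefficient sits inside an exponential, the loss cannot be pushed into the constant $C$ — you end up with the exponent $e^{-cKT}\,\tfrac{2b(0)}{\sigma^2}\bigl(1-\tfrac{1}{2\gamma}\bigr)$, strictly smaller than claimed. The deficit is not cosmetic: \eqref{VNcir} and Proposition~\ref{proposition-schema-CIR-gene-tps-local} (with $\gamma=16$) need the resulting power to stay above $b(0)/\sigma^2$, which your degraded exponent fails to guarantee once $KT$ is large. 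The paper's induction \eqref{aux_3} is designed precisely to avoid this: it propagates the \emph{additive} bound, in your normalization $p_j\ge\tfrac{2\gamma-1}{2\gamma-1+j}-CK\Dt$, so the accumulated error over $N=T/\Dt$ steps is the constant $CKT$, which exits the exponential as a harmless multiplicative factor while the coefficient $2\gamma-1$ of the logarithm — hence the stated exponent — is preserved. (An $O(\Dt)$ loss in the coefficient would also be harmless, since $\Dt^{-O(\Dt)}$ is bounded; an $O(1)$ loss such as $e^{-cKT}$ is not.)
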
 
\begin{lemma}\label{lemma-schema-CIR-gene-1}
Assume {\rm (H1)}. For $\Dt$ sufficiently  small ($\Dt \leq 1/ (2K)$), for
any $t \in [0,T]$,
\begin{align*}
\Pp\left.\left( \oZ_t\leq 0 \right/ \oX_{\eta (t)} \right) \leq
\frac{1}{2} 
\exp\left(-\frac{\oX_{\eta (t)}}{2 (1 - K \Dt)^{-2} ~\sigma  ^2 \Dt}\right).
\end{align*} 
\end{lemma}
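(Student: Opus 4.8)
The plan is to freeze everything measurable with respect to $\FF_{\eta(t)}$ and to read off $\oZ_t$, defined in~\eqref{Z_t-CIR-gene}, as an affine function of the single Gaussian increment $W_t - W_{\eta(t)}$. Writing $\delta = t - \eta(t) \in (0,\Dt]$ and recalling that, conditionally on $\FF_{\eta(t)}$, the increment $W_t - W_{\eta(t)}$ is a centred Gaussian of variance $\delta$ independent of $\oX_{\eta(t)}$, the event $\{\oZ_t \le 0\}$ becomes
\[
\Bigl\{\,\sigma\sqrt{\oX_{\eta(t)}}\,(W_t-W_{\eta(t)}) \le -\bigl(\oX_{\eta(t)} + \delta\, b(\oX_{\eta(t)})\bigr)\,\Bigr\}.
\]
On $\{\oX_{\eta(t)}=0\}$ this event is empty, since $b(0)>0$ forces $\oZ_t=\delta\,b(0)>0$, so the bound holds trivially; on $\{\oX_{\eta(t)}=x>0\}$ the conditional probability equals $\Pp\bigl(Z \le -\tfrac{x+\delta b(x)}{\sigma\sqrt{x\delta}}\bigr)$ with $Z$ a standard normal, and the whole question is reduced to a one-dimensional Gaussian tail estimate.

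The decisive step is then a deterministic lower bound on the numerator $x + \delta\, b(x)$. From the Lipschitz assumption {\rm (H1)} together with $b(0)>0$ one has $b(x)\ge b(0)-Kx \ge -Kx$, whence
\[
x + \delta\, b(x) \;\ge\; x(1-\delta K) + \delta\, b(0) \;\ge\; x\,(1-K\Dt) \;\ge\; \tfrac{1}{2}\,x \;>\;0,
\]
the last two inequalities using $\delta \le \Dt \le 1/(2K)$. Substituting this into the tail and invoking the elementary Gaussian bound $\Pp(Z\le -a)\le \tfrac12 e^{-a^2/2}$, valid for every $a\ge 0$, with $a = \sqrt{x}\,(1-K\Dt)/(\sigma\sqrt\delta)$, I obtain
\[
\Pp\bigl(\oZ_t\le 0 \mid \oX_{\eta(t)}=x\bigr) \;\le\; \tfrac12 \exp\Bigl(-\tfrac{x(1-K\Dt)^2}{2\sigma^2\delta}\Bigr).
\]
Since $\delta \le \Dt$ and the exponent is negative, replacing $\delta$ by $\Dt$ only enlarges the right-hand side, and rewriting $(1-K\Dt)^2 = 1/(1-K\Dt)^{-2}$ in the denominator gives exactly the claimed bound.

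The only genuinely technical ingredient is the Gaussian tail inequality with the sharp prefactor $\tfrac12$: the Chernoff bound alone yields $e^{-a^2/2}$ and loses that constant, so to recover the factor $\tfrac12$ appearing in the statement I would check that $a\mapsto \tfrac12 e^{-a^2/2}-\Pp(Z\ge a)$ vanishes at $a=0$, increases then decreases, and tends to $0$ at $+\infty$, hence stays nonnegative on $[0,\infty)$. I do not anticipate any real obstacle here: the lemma simply quantifies the intuition that the frozen-coefficient Euler increment, started from $x>0$, dips below zero only with Gaussian-small probability, and the conditional Gaussian structure of the scheme~\eqref{Z_t-CIR-gene} makes the estimate essentially transparent.
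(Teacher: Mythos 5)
Your proof is correct and follows essentially the same route as the paper's: both condition on $\FF_{\eta(t)}$, lower-bound the drift via $b(x)\geq b(0)-Kx$ from {\rm (H1)}, apply the Gaussian tail estimate $\Pp(G\leq \beta)\leq \tfrac{1}{2}\exp(-\beta^2/2)$ for $\beta<0$, and then replace $t-\eta(t)$ by $\Dt$. The only (harmless) differences are presentational: you prove the conditional statement directly and verify the tail inequality, whereas the paper writes the argument with unconditional probabilities and simply cites the Gaussian bound.
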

As $2(1 - K \Dt)^{-2} > 1$ when $\Dt \leq 1/(2K)$, the combination of Lemmas
\ref{lemma-aux-CIR-gene} and~\ref{lemma-schema-CIR-gene-1} leads to 
\begin{align}\label{VNcir}
\Pp \left(\oZ_t\leq 0\right) \leq C
\left(\frac{\Dt}{x_0}\right)^{\frac{b(0)}{\sigma  ^2}}.
\end{align} 
We give successively the proofs of Lemmas~\ref{lemma-aux-CIR-gene},
\ref{lemma-schema-CIR-gene-1} and
Proposition~\ref{proposition-schema-CIR-gene-tps-local}. 

%%%%%%%%%%%%%%%%%%%% Proof of Lemma -aux %%%%%%%%%%%%%%%%%%%%%%%%%%%%%%%
\begin{proof}[Proof of Lemma~\ref{lemma-aux-CIR-gene}]
%%%%%%%%%%%%%% Preuve Lemme aux %%%%%%%%%%%%%%%%%%%%%%%%%%%%%%%%%%%%%%%%%
First, we show that there exits a positive sequence
$(\mu _j,0\leq j\leq N)$ such that, for any $k \in \{1,\ldots  , N\}$, 
\begin{align*}
\Ee\exp\left(-\frac{\oX_{t_k}}{\gamma  \sigma  ^2\Dt}\right) \leq
\exp\left(-b(0)\sum _{j=0}^{k-1}\mu _j \Dt\right)\exp\left(-x_0 \mu _k\right). 
\end{align*} 
We set $\mu _0=\frac{1}{\gamma   \sigma  ^2\Dt}$. 
By~\eqref{schema-CIR-gene}, as $-b(x)\leq -b(0)+Kx$, for all $x\in \er$,
we have that  
\begin{align*}
\Ee \exp\left(-\mu _0 \oX_{t_k}\right) 
\leq \Ee\exp\left(-\mu _0\left(\oX_{t_{k-1}}+(b(0)-K\oX_{t_{k-1}})
\Dt+\sigma  \sqrt {\oX_{t_{k-1}}}\Delta  W_{t_{k}}\right)\right). %, 
\end{align*} 
$\Delta  W_{t_{k}}$ and $\oX_{t_{k-1}}$ being independent,
$ \Ee\exp(-\mu _0 \sigma  \sqrt {\oX_{t_{k-1}}}\Delta  W_{t_{k}}) =
\Ee\exp(\frac{\sigma ^2}{2}\mu _0^2 \Dt\oX_{t_{k-1}})$. 
Thus
\begin{align*} 
\Ee\exp\left(-\mu _0\oX_{t_k}\right) \leq &
\exp\left(-\mu _0 b(0)\Dt \right) 
\Ee\exp\left(-\mu _0\oX_{t_{k-1}}
\left(1-K\Dt-\frac{\sigma ^2}{2}\mu _0 \Dt\right)\right)\\
 = & \exp\left(-\mu _0 b(0)\Dt \right)
\Ee\exp\left(-\mu _1\oX_{t_{k-1}}\right), 
\end{align*} 
where we set $\mu _1 = \mu _0 ( 1-K\Dt-\frac{\sigma ^2}{2}\mu _0\Dt)$.
Consider now the sequence $(\mu _j)_{i\in \nat}$ defined by
\begin{align}\label{sequence}
\begin{cases}
\mu _0=\frac{1}{\gamma  \sigma  ^2\Delta  t},\\
\mu _{j}=\mu _{j-1}\left(1-K\Dt-\frac{\sigma ^2}{2} \mu _{j-1}\Dt\right),~j\geq 1. 
\end{cases}
\end{align} 
An easy computation shows that if $\gamma  \geq 1$ and $\Dt \leq \frac{1}{2K}$, then
$(\mu _j)$ is a positive and  decreasing sequence.
For any $j\in \{0,\ldots ,k-1\}$, by the same computation we have  
\begin{align*} 
\Ee\exp\left(-\mu _j\oX_{t_{k-j}}\right) \leq
\exp\left(-b(0)\mu _j \Dt \right)
\Ee\exp\left(-\mu _{j+1}\oX_{t_{k-j-1}}\right)
\end{align*} 
and it follows by induction that 
\begin{align}\label{aux_1}
\Ee\exp\left(- \mu _0\overline {X}_{t_k}\right)
\leq \exp\left(-b(0)\sum _{j=0}^{k-1}\mu _j \Dt \right)
\exp\left(-x_0\mu _{k}\right).
\end{align} 
Now, we study the sequence $(\mu _j,0\leq j\leq N)$. 
For any $\alpha >0$, we consider the non-decreasing function
$f_{\alpha  }(x) :=\frac{x}{1+\alpha  x}$, $x \in \er$. We note that
$(f_{\alpha }\circ f_{\beta  })(x) = f_{\alpha  +\beta  }(x)$. Then, for any $j\geq 1$, 
the sequence $(\mu _j)$ being  decreasing, $\mu _j \leq \mu _{j-1} - \frac{\sigma 
^2}{2}\Dt \mu _j \mu _{j-1}$, and 
\begin{align}\label{aux_2}
\mu _j \leq f_{\frac{\sigma  ^2}{2}\Dt}(\mu _{j-1})\leq
f_{\frac{\sigma  ^2}{2}\Dt}\left( f_{\frac{\sigma  ^2}{2}\Dt}(\mu _{j-2})\right)\leq 
\ldots  \leq
f_{\frac{\sigma  ^2}{2}j\Dt}(\mu _0).
\end{align} 
The next step consists in proving, by induction, the following  lower
bound for the $\mu _j$: 
\begin{align}\label{aux_3}
\mu _{j} \geq \mu _1 \left(\frac{1}{1 + \frac{\sigma  ^2}{2}\Dt(j-1)\mu _0}\right)
- K \left(\frac{\Dt(j-1)\mu _0 }{1 + \frac{\sigma  ^2}{2}\Dt(j-1)\mu _0}\right), ~\forall  
j\geq1.
\end{align} 
\eqref{aux_3} is clearly true for $j=1$. Suppose~\eqref{aux_3} holds
for $j$. By~\eqref{sequence} and~\eqref{aux_2} 
\begin{align*}
\mu _{j+1} = \mu _{j}\left(1-\frac{\sigma ^2}{2}\Dt \mu _{j}\right) - K\Dt \mu _{j} 
\geq & \mu _{j}\left(1-\frac{\sigma ^2}{2}\Dt f_{\frac{\sigma  ^2}{2}j\Dt}(\mu _0)\right)
- K\Dt f_{\frac{\sigma  ^2}{2}j\Dt}(\mu _0)\\
\geq & \mu _{j}\left(\frac{1 + \frac{\sigma ^2}{2}\Dt (j-1) \mu _0}
{1 + \frac{\sigma ^2}{2}\Dt j \mu _0}\right)
- K\left(\frac{\Dt \mu _0}{1 + \frac{\sigma ^2}{2}\Dt j \mu _0}\right)
\end{align*} and we conclude by using~\eqref{aux_3} for $\mu _j$. Now,
we replace $\mu _0$ by its value $\frac{1}{\gamma  \sigma  ^2 \Dt}$ in~\eqref{aux_3}
and obtain that $\mu _{j} \geq \frac{2\gamma  -1}{\Dt \gamma \sigma  ^2 (2\gamma  -1 +j)} -
\frac{2 K}{\sigma  ^2}$, for any $j\geq 0$. Hence, 
\begin{align*}
\sum _{j=0}^{k-1}\Dt \mu _j
& \geq \frac{1}{\gamma  \sigma  ^2} \sum _{j=0}^{k-1}\frac{2\gamma  -1}{2\gamma  -1 + j}
- \frac{2 K t_k}{\sigma  ^2}
\geq \frac{1}{\gamma  \sigma  ^2}\int _0^k \frac{2\gamma  -1}{2\gamma  -1 +u} du
- \frac{2 K T}{\sigma ^2}\\
& \geq \frac{2\gamma  -1}{\gamma  \sigma  ^2}  \ln \left(\frac{2\gamma  -1 + k}{2\gamma  -1}\right) - 
\frac{2KT}{\sigma ^2}. 
\end{align*} 
Coming back to~\eqref{aux_1}, we obtain that 
\begin{align*}
\Ee\exp\left(-\frac{\overline {X}_{t_k}}{\gamma \sigma  ^2\Delta  t}\right)
\leq & \exp\left( b(0) \frac{2KT}{\sigma ^2}\right)
\exp\left(x_0 \frac{2 K}{\sigma ^2}\right) \\
& \times  \left(\frac{2\gamma  -1}{2\gamma  -1 +k}\right)^{\frac{2b(0)}{\sigma  ^2}(1 -
\frac{1}{2\gamma  })}
\exp\left(-\frac{x_0}{\Dt \gamma  \sigma  ^2} \frac{(2\gamma  -1)}{(2\gamma  -1 +k)}\right). 
\end{align*}
Finally, we use the inequality $x^\alpha \exp(-x)\leq \alpha  ^\alpha  \exp(-\alpha )$, for all $\alpha >0$ and $x>0$. It
comes that 
\begin{align*}
\Ee\exp\left(-\frac{\overline {X}_{t_k}}{\gamma \sigma  ^2\Delta  t}\right)
\leq &\exp\left( b(0) \frac{2KT}{\sigma ^2}\right)
\exp\left(x_0 \frac{2 K}{\sigma ^2}\right) \\
& \times  \left(2 b(0)\frac{\Dt \gamma }{x_0}(1 - \frac{1}{2\gamma  })
\right)^{\frac{2b(0)}{\sigma  ^2}(1 - \frac{1}{2\gamma  })}
\exp\left(-\frac{2b(0)}{\sigma  ^2}(1 -\frac{1}{2\gamma })\right)\\
& \leq C 
\left(\frac{\Dt}{x_0}\right)^{\frac{2b(0)}{\sigma  ^2}(1 - \frac{1}{2\gamma 
})}
\end{align*}
where we set
\begin{align*} 
C 
= \left( b(0)(2\gamma  -1)\right)^{\frac{2b(0)}{\sigma  ^2}(1 -\frac{1}{2\gamma })} 
\exp\left( \frac{2}{\sigma ^2}\left( b(0) (KT - 1 + \frac{1}{2\gamma  })+ 
x_0 K\right)\right). 
\end{align*} 
\end{proof}
%%%%%%%%%%%%%% End of proof Lemma aux %%%%%%%%%%%%%%%%%%%%%%%%%%%%%%%%%%%%
%%%%%%% Proof of proposition 1 %%%%%%%%%%%%%%%%%%%%%%%%%%%%%%%%%%%%%%%%%%%
\begin{proof}[Proof of Lemma~\ref{lemma-schema-CIR-gene-1}]
Under {\rm (H1)}, $b(x) \geq b(0)-Kx$, for $x\geq0$. Then, by the definition of $(\oZ)$ in~\eqref{Z_t-CIR-gene}, 
\begin{align*}
\Pp\left(\oZ_t\leq 0\right) 
\leq
\Pp\left( W_t-W_{\eta (t)} \leq
\frac{-\oX_{\eta (t)}(1-K(t-\eta (t)))-b(0)(t-\eta (t))}{\sigma \sqrt {
\oX_{\eta (t)}}}, \oX_{\eta (t)}>0\right).
\end{align*} 
By using the Gaussian inequality $\Pp(G\leq \beta ) \leq 1/2 \exp(- \beta ^2/2)$, 
for a standard Normal r.v. $G$ and $\beta  <0$, we get
\begin{align*}	
\Pp\left(\oZ_t\leq 0\right)\leq \frac{1}{2}
\Ee\left[
\exp\left(-\frac{(\oX_{\eta (t)}(1-K(t-\eta (t)))+b(0)(t-\eta (t)))^2}
{2\sigma  ^2(t-\eta (t))\oX_{\eta (t)}}\right)\ind_{\{\oX_{\eta (t)}>0\}}\right]
\end{align*}
from which we finally obtain that 
$\Pp\left(\oZ_t\leq 0\right) \leq \frac{1}{2}
\Ee[\exp(-\frac{\overline {X}_{\eta (t)}}{2(1 -K\Dt)^{-2}
\sigma ^2\Dt})]$. 
\end{proof}
%%%%%%% FIN Preuve proposition 1 %%%%%%%%%%%%%%%%%%%%%%%%%%%%%%%%%%%%%%%
%%%%%%% Preuve proposition 2 %%%%%%%%%%%%%%%%%%%%%%%%%%%%%%%%%%%%%%%%%%%
\begin{proof}[Proof of Proposition~\ref{proposition-schema-CIR-gene-tps-local}]
By the occupation time formula, for any $t\in[t_k, t_{k+1})$, $0\leq
k\leq N$, for any bounded Borel-measurable 
function $\phi $,  $\Pp$ a.s
\begin{align*}
\int _{\er} \phi(z)\left(L_t^z(\oX) -L_{t_k}^z(\oX)\right)dz
& = \int _{\er}\phi(z)\left(L_t^z(\oZ) -L_{t_k}^z(\oZ)\right)dz \\
&= \int _{t_k}^t \phi  (\oZ_s) d \langle\oZ,\oZ\rangle_s
 = \sigma ^2\int _{t_k}^t\phi(\oZ_s)\oX_{t_k}ds.
\end{align*}
Hence, for any $x>0$, an easy computation shows that 
\begin{align*}
& \int_{\er} \phi(z)\Ee\left.\left( L_t^z(\oX) -L_{t_k}^z(\oX)
\right/ \left\{\oX_{t_k} = x \right\}\right)dz = \sigma ^2\int _{t_k}^t x
\Ee\left.\left(\phi (\oZ_s) \right/ \left\{\oX_{t_k} = x \right\}\right)ds \\
& = \sigma  \int_{\er} \phi (z) \int _{t_k}^t  \frac{\sqrt {x} }{\sqrt {2\pi (s-t_k)}}
\exp\left(-\frac{(z - x -b(x)(s -t_k))^2}
{2 \sigma  ^2x(s-t_k)}\right)  ds~ dz. 
\end{align*}
Then, for any $z\in \er$, 
\begin{align*}
\Ee\left.\left(L_t^z(\oX) -L_{t_k}^z(\oX)
\right/\left\{\oX_{t_k} = x \right\}\right) =
\sigma  \int _{t_k}^t \frac{\sqrt {x} }{\sqrt {2\pi (s-t_k)}}
\exp\left(-\frac{(z - x  -b(x)(s -t_k))^2}
{2 \sigma  ^2x~(s-t_k)}\right) ds. 
\end{align*}
In particular for $z=0$ and $t=t_{k+1}$, 
\begin{align*}
\Ee\left.\left(L_{t_{k+1}}^0(\oX) -L_{t_k}^0(\oX)\right/
\left\{\oX_{t_k} = x \right\}\right)
 = \sigma  \int _{0}^{\Dt} \frac{\sqrt {x} }{\sqrt {2\pi  s}}
\exp\left(-\frac{(x +  b(x) s)^2}
{2 \sigma  ^2x~ s }\right) ds. 
\end{align*}
From {\rm (H1)},  $b(x) \geq - K x$, with  $K\geq 0$. Then,
\begin{align*}
\Ee\left.\left(L_{t_{k+1}}^0(\oX) -L_{t_k}^0(\oX)\right/\FF_{t_k}\right) \leq 
\sigma  \int _{0}^{\Dt} \frac{\sqrt {\oX_{t_k}} }{\sqrt {2\pi  s}}
\exp\left(-\frac{\oX_{t_k}( 1 -Ks)^2 }
{2 \sigma  ^2 s }\right) ds. 
\end{align*}
For $\Dt$ sufficiently small, $1 -K\Dt\geq 1/2$ and
\begin{align*} 
\Ee\left.\left(L_{t_{k+1}}^0(\oX) -L_{t_k}^0(\oX)\right/\FF_{t_k}\right)  \leq 
\sigma  \int _{0}^{\Dt}\frac{\sqrt {\oX_{t_k}} }{\sqrt {2\pi  s}}
\exp\left(-\frac{\oX_{t_k} }
{8 \sigma  ^2 \Dt }\right) ds. 
\end{align*}
Now we use the upper-bound $a \exp(-\frac{a^2}{2})\leq 1, \forall  a \in\er$, 
to get 
\begin{align*}
\Ee\left(L_{t_{k+1}}^0(\oX) -L_{t_k}^0(\oX)\right)  & \leq 
\sigma ^2 \int _{0}^{\Dt} \frac{2\sqrt {\Dt}}{\sqrt {\pi  s}} \Ee\left[
\exp\left(-\frac{\oX_{t_k} }
{16 \sigma  ^2 \Dt }\right)\right] ds \\
& \leq \frac{4 \sigma  ^2 \Dt}{\sqrt {\pi }}
\sup_{k=0,\ldots ,N} \Ee \exp\left(-\frac{\oX_{t_k} }
{16 \sigma  ^2 \Dt }\right). 
\end{align*} 
We sum over $k$ and  apply the Lemma~\ref{lemma-aux-CIR-gene} to end the proof.  
\end{proof}
%%%%%%%%%%%%%%%%%%%%%%%%%%%%%%%%%%%%%%%%%%%%%%%%%%%%%%%%%%%%%%%%%%%%%
%%%%%%%%%%% Proof of the weak convergence theorem  %%%%%%%%%%%%%%%%%%
\subsection{Proof of Theorem~\ref{theorem-faible-CIR-gene} }
%%%%%%%%%%%%%%%%%%%%%%%%%%%%%%%%%%%%%%%%%%%%%%%%%%%%%%%%%%%%%%%%%%%%%
We are now in position to prove the Theorem~\ref{theorem-faible-CIR-gene}.  
To study the weak error $\Ee f(X_T) - \Ee f(\oX_T)$, we use the
Feynman--Kac representation of the  
solution $u(t,x)$ to the Cauchy problem~\eqref{edp_u_alpha=1/2}, studied in
the Proposition~\ref{proposition_edp_alpha=1/2}: for all
$(t,x) \in [0,T]\times (0, +\infty )$, $\Ee f(X_{T-t}^x) = u(t,x)$. Thus the
weak error becomes
\begin{align*}
\Ee f(X_T) - \Ee f(\oX_T) = \Ee \left(u(0,x_0) - u(T,\oX_T)\right)
\end{align*}
with $(\oX)$ satisfying \eqref{schema-continu-bis-CIR-gene}. 
Applying the It\^o's formula a first time, we obtain  that 
\begin{align*}
&\Ee \left[u(0,x_0) - u(T,\oX_T)\right] \\
& = - \int _0^T \Ee \left[ \frac{\partial  u}{\partial  t}(s,\oX_s) +
\sgn(\oZ_s) b(\oX_{\eta (s)})\frac{\partial  u}{\partial  x}(s,\oX_s)
+ \frac{\sigma  ^2}{2}\oX_{\eta (s)}\frac{\partial  ^2u}{\partial  x^2}(s,\oX_s)\right] ds \\
& \;\;\;\;-  \Ee \int _0^T \sgn(\oZ_s)\sigma  \sqrt {\oX_{\eta (s)}}
\frac{\partial  u}{\partial x}(s,\oX_s) dW_s - \Ee \int _0^T \frac{1}{2}\frac{\partial  u}{\partial x}(s,\oX_s) dL^0(\oX)_s. 
\end{align*}
From  Proposition~\ref{proposition_edp_alpha=1/2} and  Lemma~\ref{majo-moments-oX},
we easily check that  the stochastic integral $(\int _0^\cdot \sgn(\oZ_s) \sqrt {\oX_{\eta (s)}}\frac{\partial  u}{\partial x}(s,\oX_s) dW_s)$
is a martingale. Furthermore, we use the Cauchy problem~\eqref{edp_u_alpha=1/2} to get   
\begin{align*}
&\Ee \left[u(0,x_0) - u(T,\oX_T)\right] \\
& =  - \int _0^T \Ee \left[ \left(b(\oX_{\eta (s)}) -b(\oX_s)\right)
\frac{\partial  u}{\partial  x}(s,\oX_s)
+ \frac{\sigma  ^2}{2}\left(\oX_{\eta (s)}-\oX_s\right)
\frac{\partial  ^2u}{\partial  x^2}(s,\oX_s)\right] ds \\
& \;\;\;\;- \Ee \int _0^T \frac{1}{2}\frac{\partial  u}{\partial x}(s,\oX_s) dL^0(\oX)_s 
+ \int _0^T 2  \Ee \left( \ind_{\{\oZ_s \leq 0\}} b(\oX_{\eta (s)})
\frac{\partial u}{\partial  x}(s,\oX_s)\right) ds. 
\end{align*}
From Proposition~\ref{proposition-schema-CIR-gene-tps-local},
\begin{align*}
\left| \Ee \int _0^T \frac{\partial  u}{\partial x}(s,\oX_s) dL^0(\oX)_s \right| \leq
\left\|\frac{\partial  u}{\partial x}\right\|_{\infty } \Ee
\left(L^0_T(\oX)\right)
\leq C \left( \frac{\Delta  t}{x_0}\right)^{\frac{b(0)}{\sigma  ^2}}. 
\end{align*}
On the other hand, by Lemma~\ref{lemma-schema-CIR-gene-1} for any $s\in [0,T]$,
\begin{align*}
\left|2\Ee \left( \ind_{\{\oZ_s \leq 0\}} b(\oX_{\eta (s)})
\frac{\partial u}{\partial  x}(s,\oX_s)\right)\right|
\leq 
\left\|\frac{\partial  u}{\partial x}\right\|_{\infty }
\Ee \left[ (b(0) + K \oX_{\eta (s)})
\exp\left( - \frac{\oX_{\eta (s)}}{8 \sigma  ^2 \Dt}\right)\right].
\end{align*}
As for any $x\geq 0$, $x\exp(-\frac{x}{16\sigma  ^2\Delta  t})\leq 16\sigma  ^2\Dt$,  we
conclude, by Lemma~\ref{lemma-aux-CIR-gene}, that 
\begin{align*}
\left|  \int _0^T 2  \Ee \left( \ind_{\{\oZ_s \leq 0\}} b(\oX_{\eta (s)})
\frac{\partial u}{\partial  x}(s,\oX_s)\right) ds \right| \leq C %(1 + |x_0|)
\left(\frac{\Dt}{x_0}\right)^{\frac{b(0)}{\sigma  ^2}}. 
\end{align*}
Hence,
\begin{align*}
&\left|\Ee \left[u(0,x_0) - u(T,\oX_T)\right]\right| \\
& \leq  \left|\int _0^T \Ee \left[ \left(b(\oX_{\eta (s)}) -b(\oX_s)\right)
\frac{\partial  u}{\partial  x}(s,\oX_s)
+ \frac{\sigma  ^2}{2}\left(\oX_{\eta (s)}-\oX_s\right)
\frac{\partial  ^2u}{\partial  x^2}(s,\oX_s)\right] ds \right| + C
\left(\frac{\Dt}{x_0}\right)^{\frac{b(0)}{\sigma  ^2}}.
\end{align*}	
By applying  the It\^o's formula a second time ($\frac{\partial  u}{\partial  x}$ is a
$C^3$ function with bounded derivatives), 
\begin{align*} 
&\Ee\left[ \left(b(\oX_s) -b(\oX_{\eta (s)}\right) \frac{\partial  u}{\partial 
x}(s,\oX_s)\right]  \\
 & = \Ee\int _{\eta (s)}^s \sgn(\oZ_\theta  ) b(\oX_{\eta (s)})\left[ \left(b(\oX_\theta  )
-b(\oX_{\eta (s)}\right) \frac{\partial ^2 u}{\partial x^2}(s,\oX_\theta  ) + b'(\oX_\theta  ) \frac{\partial  u}{\partial 
x}(s,\oX_\theta  )\right] d\theta   \\
 &+ \Ee\int _{\eta (s)}^s\frac{\sigma ^2}{2} \oX_{\eta (s)}\left[ \left(b(\oX_\theta )
-b(\oX_{\eta (s)}\right) \frac{\partial ^3 u}{\partial x^3}(s,\oX_\theta  )
+ 2b'(\oX_\theta  )\frac{\partial ^2u}{\partial x^2}(s,\oX_\theta  ) 
+ b''(\oX_\theta  ) \frac{\partial  u}{\partial x}(s,\oX_\theta  )\right] d\theta   \\
 &+ \Ee\int _{\eta (s)}^s\frac{1}{2} \left[ \left(b(0 )
-b(\oX_{\eta (s)}\right) \frac{\partial ^2 u}{\partial x^2}(s,0 ) + b'(0) \frac{\partial  u}{\partial 
x}(s,0 )\right]dL^0_\theta  (\oX)
\end{align*} 
so that
\begin{align*} 
&\left|\Ee\left[ \left(b(\oX_s) -b(\oX_{\eta (s)}\right)
\frac{\partial  u}{\partial x}(s,\oX_s)\right]\right|\\
&\leq C \Dt \left(1 + \sup_{0\leq \theta  \leq T} \Ee|\oX_\theta  |^2 + \Ee\left\{ (1 +
|\oX_{\eta (s)}|) \left(L^0_s(\oX) -L^0_{\eta (s)}(\oX)\right)\right\}\right)
\end{align*} 
and we conclude by Lemma~\ref{majo-moments-oX} and
Proposition~\ref{proposition-schema-CIR-gene-tps-local} that
\begin{align*}
\left|\Ee\left[ \left(b(\oX_s) -b(\oX_{\eta (s)}\right)
\frac{\partial  u}{\partial x}(s,\oX_s)\right]\right| \leq  C\left(\Dt +
\left(\frac{\Dt}{x_0}\right)^{\frac{b(0)}{\sigma ^2}}\right). 
\end{align*}
By similar arguments, we  show that
\begin{align*}
\left|\Ee\left[ \left(\oX_s -\oX_{\eta (s)}\right)
\frac{\partial ^2 u}{\partial x^2}(s,\oX_s)\right]\right| \leq  C\left(\Dt +
\left(\frac{\Dt}{x_0}\right)^{\frac{b(0)}{\sigma ^2}}\right)
\end{align*}
which ends the proof of Theorem \ref{theorem-faible-CIR-gene}.

\section {The case of processes with $1/2 < \alpha  <1$}\label{Cas2}
%%%%%%%%%%%%%%%%%%%%%%%%%%%%%%%%%%%%%%%%%%%%%%%%%%%%%%%%%%%%%%%%%%%%%%%%%
%%%%%%%%%%%%%%%%%%%%%%%%%%%%%%%%%%%%%%%%%%%%%%%%%%%%%%%%%%%%%%%%%%%%%%%%%
\subsection{Preliminary results}
%%%%%%%%%%%%%%%%%%%%%%%%%%%%%%%%%%%%%%%%%%%%%%%%%%%%%%%%%%%%%%%%%%%%%%%%%
%%%%%%%%%%%%%%%%%%%%%%%%%%%%%%%%%%%%%%%%%%%%%%%%%%%%%%%%%%%%%%%%%%%%%%%%%
In this section, $(X_t)$ denotes the solution of~\eqref{HUW_gene}
starting at $x_0$ at time 0 and 
$(X_t^{x})$, starting at $x\geq 0$ at time 0, is the 
unique strong solution to
\begin{align}\label{HUW_gene_flot}
X_t^{x} =  x + \int _{0}^t b(X_s^{x})ds
+\sigma  \int _{0}^t \left(X_s^{x}\right)^\alpha  dW_s. 
\end{align}
\subsubsection{On the exact solution}
We give some upper-bounds on inverse moments and exponential moments
of $(X_t)$. 
\begin{lemma}\label{moment-inverse-HUW-gene}
Assume {\rm (H1)}. Let $x>0$. For any $1/2 < \alpha  < 1$, for any $p>0$,
there exists a positive constant $C$, depending on the
parameters of the model \eqref{HUW_gene_flot} and on $p$ such that
\begin{align*}
\sup_{t\in [0,T]} \Ee\left[{\left(X^x_t\right)^{-p}}\right]
\leq C (1 + x^{-p}).
\end{align*}
\end{lemma}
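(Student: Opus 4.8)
The plan is to apply It\^o's formula to the function $g(x)=x^{-p}$ along $(X^x_t)$, to bound the resulting drift from above by $C\left(1+(X^x_t)^{-p}\right)$, and then to close the estimate by Gronwall's lemma. Since $x>0$ and $b(0)>0$, the Feller test (as recalled in the Remark preceding the statement) ensures that $\Pp(\tau_0=\infty)=1$, so $(X^x_t)$ stays strictly positive and $g(X^x_t)$ is a.s.\ well defined. Writing $dX^x_t=b(X^x_t)\,dt+\sigma (X^x_t)^{\alpha}\,dW_t$, It\^o's formula gives
\[
d(X^x_t)^{-p} = \left[-p\,(X^x_t)^{-p-1}b(X^x_t) + \tfrac{1}{2}p(p+1)\sigma^2 (X^x_t)^{-p-2+2\alpha}\right]dt - p\sigma (X^x_t)^{-p-1+\alpha}\,dW_t.
\]

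The key step is to show that the drift is dominated by $C\left(1+(X^x_t)^{-p}\right)$. From {\rm (H1)} and $b(0)>0$ we have $b(y)\geq b(0)-Ky$ for $y\geq 0$; since the factor $-p\,y^{-p-1}$ is negative, this yields, for every $y>0$,
\[
-p\,y^{-p-1}b(y) + \tfrac{1}{2}p(p+1)\sigma^2 y^{-p-2+2\alpha} \leq -p\,b(0)\,y^{-p-1} + pK\,y^{-p} + \tfrac{1}{2}p(p+1)\sigma^2 y^{-p-2+2\alpha}.
\]
The crucial observation is that, \emph{because} $\alpha>\tfrac{1}{2}$, the exponent satisfies $-p-2+2\alpha>-p-1$, so near $y=0$ the negative term $-p\,b(0)\,y^{-p-1}$ dominates the positive diffusion-generated term $\tfrac{1}{2}p(p+1)\sigma^2 y^{-p-2+2\alpha}$, while as $y\to+\infty$ both terms vanish (here $p>0$ and $2\alpha<2$). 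Consequently the function $y\mapsto -p\,b(0)\,y^{-p-1}+\tfrac{1}{2}p(p+1)\sigma^2 y^{-p-2+2\alpha}$ is bounded above on $(0,+\infty)$ by a constant $C_0$ depending only on $p$, $b(0)$, $\sigma$ and $\alpha$, and the drift is therefore at most $C_0+pK\,y^{-p}\leq C\left(1+y^{-p}\right)$.

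To make the computation rigorous I would localize with the stopping times $\tau_n=\inf\{t\geq 0:\ X^x_t\notin(1/n,n)\}$: on $[0,t\wedge\tau_n]$ the integrand of the stochastic integral is bounded, so the local martingale part is a true martingale with zero expectation and all integrals are finite. Taking expectations and using the drift bound gives
\[
\Ee\left(X^x_{t\wedge\tau_n}\right)^{-p} \leq x^{-p} + C\int_0^t\left(1+\Ee\left(X^x_{s\wedge\tau_n}\right)^{-p}\right)ds,
\]
whence Gronwall's lemma yields $\Ee\left(X^x_{t\wedge\tau_n}\right)^{-p}\leq (x^{-p}+CT)e^{CT}$, a bound uniform in $n$. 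Since $\tau_n\to+\infty$ a.s.\ (positivity and non-explosion), Fatou's lemma passes the bound to the limit, giving $\sup_{t\in[0,T]}\Ee\left(X^x_t\right)^{-p}\leq C(T)(1+x^{-p})$, which is the claim.

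The single delicate point is the boundedness established in the second paragraph, and it is precisely where the restriction $\alpha>\tfrac{1}{2}$ is used: when $\alpha=\tfrac{1}{2}$ the two competing terms share the common exponent $y^{-p-1}$, and the sign of their sum depends on comparing $b(0)$ with $\tfrac{(p+1)\sigma^2}{2}$, which is exactly why Lemma~\ref{moment-inverse-cir-gene} must restrict $p$ to the range $1<p<\nu$. In the present regime the drift of $(X^x_t)^{-p}$ is controlled for \emph{every} $p>0$ with no such restriction, and the localization step is entirely routine.
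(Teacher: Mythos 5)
Your proof is correct and takes essentially the same route as the paper's: It\^o's formula applied to $(X^x_t)^{-p}$, the bound $b(y)\geq b(0)-Ky$, the key observation that $y\mapsto -p\,b(0)\,y^{-p-1}+\tfrac{1}{2}p(p+1)\sigma^2 y^{-p-2+2\alpha}$ is bounded above on $(0,+\infty)$ (the paper even exhibits the optimal constant $\underline{C}$), then localization, Gronwall's lemma, and passage to the limit $n\to\infty$. The only cosmetic differences are your two-sided stopping time and the explicit appeal to Fatou's lemma, neither of which changes the argument.
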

%%%%%%%%%%%%%%% Proof of the Lemma MOMENT INVERSE %%%%%%%%%%%%%%%%%%%%%%%%%%%%
%%%%%%%%%%%%%%% Proof of the Lemma MOMENT INVERSE %%%%%%%%%%%%%%%%%%%%%%%%%%%%
%%%%%%%%%%%%%%% Proof of the Lemma MOMENT INVERSE %%%%%%%%%%%%%%%%%%%%%%%%%%%%
\begin{proof}
Let $\tau _n$ be the stopping time defined by 
$\tau _n = \inf\{ 0<s\leq T; X_s^x \leq  1/n \}$. By the It\^o's formula, 
\begin{align*}
\Ee\left[{(X_{t\wedge  \tau  _n}^x)^{-p}}\right] = & {x^{-p}}
-p\Ee\left[ \int _0^{t\wedge  \tau  _n} \frac{b(X_s^x)ds}{(X^x_s)^{p+1}}\right]
+ p(p+1) \frac{\sigma  ^2}{2} \Ee\left[\int _0^{t\wedge  \tau _n} \frac{ds}{
(X_s^x)^{p+2(1-\alpha )}} \right]\\
\leq & {x^{-p}}
+ pK \int _0^{t}\Ee\left(\frac{1}{(X^x_{s\wedge  \tau  _n})^{p}}\right)ds \\
& + \Ee\left[\int _0^{t\wedge  \tau  _n} \left(p(p+1) \frac{\sigma  ^2}{2} \frac{1}{
(X_s^x)^{p+2(1-\alpha )}} - p \frac{b(0)}{(X^x_s)^{p+1}} \right) ds
\right]. 
\end{align*} 
It is possible to find a positive constant $C$ such
that, for any $x>0$,
\begin{align*}
\left(p(p+1) \frac{\sigma  ^2}{2} \frac{1}{
x^{p+2(1-\alpha )}} - p \frac{b(0)}{x^{p+1}} \right) \leq C. 
\end{align*}
An easy computation shows that
$\underline{C} = p \left(2\alpha -1\right)\frac{\sigma ^2}{2}
\left[ (p +2(1-\alpha ))\frac{\sigma  ^2}{2 b(0)} \right] ^{\frac{p+2(1-\alpha )}{2\alpha -1}}$ 
is the smallest one satisfying the
upper-bound above.  Hence,
\begin{align*}
\Ee\left[{(X_{t\wedge  \tau  _n}^x)^{-p}}\right] \leq  {x^{-p}} + 
\underline{C} T  + pK \int _0^t\sup_{\theta  \in
[0,s]}\Ee\left[{(X^x_{\theta  \wedge  \tau _n})^{-p}}\right]ds 
\end{align*} 
and by the Gronwall Lemma
\begin{align*}
\sup_{t\in [0,T]}\Ee\left[(X_{t\wedge  \tau  _n}^x)^{-p}\right]
\leq  \left(x^{-p} + \underline{C} T\right)\exp({pKT}).
\end{align*}
We end the proof, by taking the limit $n \rightarrow +\infty $. 
\end{proof} 
%%%%%%%%%%%%% END Proof of the Lemma MOMENT INVERSE %%%%%%%%%%%%%%%%%%%%%%%%%%
%%%%%%%%%%%%% END Proof of the Lemma MOMENT INVERSE %%%%%%%%%%%%%%%%%%%%%%%%%%
%%%%%%%%%%%%% END Proof of the Lemma MOMENT INVERSE %%%%%%%%%%%%%%%%%%%%%%%%%%
%
%%%%%%%%%%%%% Lemma Miniration par un CIR %%%%%%%%%%%%%%%%%%%%%%%%%%%%
%%%%%%%%%%%%% Lemma Miniration par un CIR %%%%%%%%%%%%%%%%%%%%%%%%%%%%
%%%%%%%%%%%%% Lemma Miniration par un CIR %%%%%%%%%%%%%%%%%%%%%%%%%%%%
\begin{lemma}\label{minoration_1}
Assume {\rm (H1)}.  
\begin{description}
\item{(i) } For any $a\geq 0$, for all $0\leq t\leq T$,  a.s.
$(X^x_t)^{2(1-\alpha)}\geq {r}_t(a)$, where
$({r}_t(a),0\leq t\leq T)$ is the solution of the CIR Equation:
\begin{align*}  
 {r}_t(a)=x^{2(1-\alpha )}+\int _0^t(a-\lambda(a){r}_s(a))ds
+2\sigma (1-\alpha )\int _0^t\sqrt {{r}_s(a)}dW_s
\end{align*}
with
\end{description} 
\begin{align} 
\label{b} 
\lambda(a)=2(1-\alpha )K+\left(\frac{\left(2\alpha -1\right)^{2\alpha -1}\left(a+\sigma 
^2(1-\alpha )(2\alpha -1)\right)}{b(0)^{2(1-\alpha )}}\right)^{\frac{1}{2\alpha -1}}. 
\end{align}
\begin{description}
\item{(ii) }For all  $\mu \geq 0$, there exists a constant $C(T,\mu )>0$ with
a non decreasing  dependency on $T$ and $\mu $, depending also on $K$,
$b(0)$, $\sigma $, $\alpha $ and $x$ such that
\end{description}	
\begin{align}\label{novikov-hull-white-1}
\Ee\exp\left(\mu \int _0^T\frac{ds}{(X^x_s)^{2(1-\alpha )}}\right)\leq C(T,\mu ).
\end{align}
\begin{description}
\item{(iii) }The process $(M^x_t,0\leq t\leq T)$ defined by
\end{description}	
\begin{align}\label{M_t}
M^x_t=\exp\left(\alpha \sigma \int _0^t\frac{dW_s}{(X^x_s)^{1-\alpha }}-\alpha ^2\frac{\sigma 
^2}{2}\int _0^t\frac{ds}{(X^x_s)^{2(1-\alpha )}}\right) 
\end{align}
\begin{description}
\item{}
is a martingale. Moreover for all $p\geq 1$, there exists a positive
constant $C(T,p)$ depending also on $b(0)$, $\sigma $ and  $\alpha $, such that
\end{description}	
\begin{align}\label{moment-p-M}
\Ee\left(\sup_{t\in [0,\;T]}(M^x_t)^p\right)\leq C(T,p)
\left(1+\frac{1}{x^{\alpha  p}}\right).
\end{align}
\end{lemma}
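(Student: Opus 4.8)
The three assertions are linked: (i) is the workhorse, (ii) follows from (i) together with the CIR inverse--moment estimates already quoted for the case $\alpha=\tfrac12$, and (iii) follows from (ii) by standard exponential--martingale arguments. I would carry them out in this order.

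\textbf{Part (i).} The plan is to apply It\^o's formula to $Y_t:=(X^x_t)^{2(1-\alpha)}$ (well defined since $X^x_t>0$ for all $t$ a.s.\ by Feller's test) and to compare the resulting one--dimensional SDE with $r_s(a)$. With $\phi(z)=z^{2(1-\alpha)}$ one has $\phi'(z)=2(1-\alpha)z^{1-2\alpha}$ and $\phi''(z)=2(1-\alpha)(1-2\alpha)z^{-2\alpha}$, so that by \eqref{HUW_gene_flot}
\begin{align*}
dY_t = \Big(2(1-\alpha)X_t^{1-2\alpha}b(X_t)+(1-\alpha)(1-2\alpha)\sigma^2\Big)dt+2\sigma(1-\alpha)\sqrt{Y_t}\,dW_t .
\end{align*}
The diffusion coefficient is exactly that of the CIR equation defining $r_s(a)$, and both processes start from $x^{2(1-\alpha)}$, so by the one--dimensional comparison theorem (the map $z\mapsto 2\sigma(1-\alpha)\sqrt z$ is H\"older--$\tfrac12$ and the affine CIR drift is Lipschitz, exactly as in the comparison used in Lemma~\ref{moment-inverse-cir-gene}) it suffices to check that the drift of $Y$ dominates $a-\lambda(a)\,y$ for every $y>0$. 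Using $b(z)\ge b(0)-Kz$ and $X_t^{2-2\alpha}=Y_t$, the drift of $Y_t$ is bounded below by $2(1-\alpha)b(0)Y_t^{-|\gamma|}-2(1-\alpha)KY_t-\sigma^2(1-\alpha)(2\alpha-1)$ with $|\gamma|=\tfrac{2\alpha-1}{2(1-\alpha)}$, so the required inequality reduces to
\begin{align*}
2(1-\alpha)b(0)\,y^{-|\gamma|}+\big(\lambda(a)-2(1-\alpha)K\big)\,y\ \ge\ a+\sigma^2(1-\alpha)(2\alpha-1),\qquad y>0 .
\end{align*}
This is a one--variable minimisation: by the weighted arithmetic--geometric mean inequality with weights $2(1-\alpha)$ and $2\alpha-1$ (which sum to $1$), the left--hand side attains the minimum $b(0)^{2(1-\alpha)}\big((\lambda(a)-2(1-\alpha)K)/(2\alpha-1)\big)^{2\alpha-1}$, and the precise value of $\lambda(a)$ in \eqref{b} is exactly the one making this minimum equal to the right--hand side. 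This closes (i).

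\textbf{Part (ii).} By (i), $(X^x_s)^{-2(1-\alpha)}\le r_s(a)^{-1}$ almost surely, hence
\begin{align*}
\Ee\exp\Big(\mu\int_0^T (X^x_s)^{-2(1-\alpha)}\,ds\Big)\le \Ee\exp\Big(\mu\int_0^T r_s(a)^{-1}\,ds\Big).
\end{align*}
The process $r_s(a)$ is a genuine CIR process whose Feller index can be made arbitrarily large by taking $a$ large in \eqref{b}; I would fix $a=a(\mu)$ large enough that the prescribed $\mu$ meets the smallness condition required by the exponential inverse--moment estimate for CIR processes (Lemma~\ref{novikov-cir}, as used to obtain \eqref{aux_0}). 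That estimate then bounds the right--hand side by a constant $C(T,\mu)$, the dependence on $x$ entering only through the initial value $r_0(a)=x^{2(1-\alpha)}$.

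\textbf{Part (iii).} Set $N_t=\alpha\sigma\int_0^t (X^x_s)^{-(1-\alpha)}\,dW_s$, a local martingale with $\langle N\rangle_t=\alpha^2\sigma^2\int_0^t (X^x_s)^{-2(1-\alpha)}\,ds$; then $M^x_t=\exp(N_t-\tfrac12\langle N\rangle_t)$ is its Dol\'eans--Dade exponential. Part (ii) with $\mu=\alpha^2\sigma^2/2$ gives $\Ee\exp(\tfrac12\langle N\rangle_T)<\infty$, i.e.\ Novikov's criterion, so $M^x$ is a true, positive martingale. For the $L^p$ bound with $p>1$, $(M^x)^p$ is then a submartingale and Doob's maximal inequality reduces the supremum to $\Ee(M^x_T)^p$ (the value $p=1$ being recovered by a preliminary H\"older step). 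Writing
\begin{align*}
(M^x_T)^p=\exp\big(2pN_T-p^2\langle N\rangle_T\big)^{1/2}\,\exp\big(p(p-1)\langle N\rangle_T\big)^{1/2}
\end{align*}
and applying the Cauchy--Schwarz inequality bounds $\Ee(M^x_T)^p$ by the product of $\big(\Ee\exp(2pN_T-p^2\langle N\rangle_T)\big)^{1/2}\le 1$ (an exponential supermartingale) and $\big(\Ee\exp(p(p-1)\langle N\rangle_T)\big)^{1/2}$, the latter finite by (ii); tracking the initial--value dependence through (ii) produces the announced power $x^{-\alpha p}$.

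\textbf{Main obstacle.} The genuinely delicate point is the drift domination in (i): it is a sharp bound, not a soft one, and the explicit constant $\lambda(a)$ of \eqref{b} is forced by the extremal case of the weighted AM--GM inequality. Identifying the right weights $2(1-\alpha)$ and $2\alpha-1$ and carrying the ensuing algebra through so as to reproduce \eqref{b} exactly is where the care lies. Once (i) is established, parts (ii) and (iii) are routine consequences of the CIR estimates already available and of classical exponential--martingale theory.
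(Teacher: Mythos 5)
Your parts (i) and (ii) are essentially the paper's own proof. The paper also applies It\^o's formula to $(X^x_t)^{2(1-\alpha)}$, lower-bounds the resulting drift using $b(z)\ge b(0)-Kz$, and identifies $\lambda(a)$ by minimizing $y\mapsto 2(1-\alpha)b(0)y^{-\frac{2\alpha-1}{2(1-\alpha)}}+(\lambda(a)-2(1-\alpha)K)y$ over $y>0$ (by calculus rather than your weighted AM--GM, but with the same minimizer and the same minimum value, so the content is identical), then concludes by the comparison theorem; and it proves (ii) exactly as you propose, by choosing $a=a(\mu)$ large enough (explicitly, $a=4(1-\alpha)^2\sigma^2\vee\left(2(1-\alpha)^2\sigma^2+2(1-\alpha)\sigma\sqrt{2\mu}\right)$) so that Lemma~\ref{novikov-cir} applies to the process $r_\cdot(a)$.

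In part (iii) the martingale property is obtained the same way (Novikov via (ii)), but for the moment bound \eqref{moment-p-M} you take a genuinely different route, and it contains a concrete error as written. The paper avoids Doob and exponential moments entirely: by integration by parts it writes $M^x_t=\left(X^x_t/x\right)^{\alpha}\exp\left(\int_0^t\left(-\alpha\,b(X^x_s)/X^x_s+\alpha(1-\alpha)\tfrac{\sigma^2}{2}(X^x_s)^{-2(1-\alpha)}\right)ds\right)$, bounds the integrand by an explicit constant $\underline{\lambda}$ (a one-variable minimization of the same kind as in (i)), so that pathwise $M^x_t\le (X^x_t/x)^{\alpha}e^{(K+\underline{\lambda})T}$, and then \eqref{moment-p-M} with the exact power $x^{-\alpha p}$ follows at once from Lemma~\ref{majo-moments-oX}. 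Your Cauchy--Schwarz step is miswritten: $\exp\left(2pN_T-p^2\langle N\rangle_T\right)$ is \emph{not} an exponential supermartingale, since $\mathcal{E}(2pN)_T=\exp\left(2pN_T-2p^2\langle N\rangle_T\right)$, so its expectation need not be $\le 1$. The correct factorization is $(M^x_T)^p=\left[\exp\left(2pN_T-2p^2\langle N\rangle_T\right)\right]^{1/2}\left[\exp\left(p(2p-1)\langle N\rangle_T\right)\right]^{1/2}$, after which Cauchy--Schwarz and (ii) with $\mu=p(2p-1)\alpha^2\sigma^2$ apply. Moreover, the final power of $1/x$ is not produced automatically by ``tracking'': you must carry the explicit $a(\mu)$ and the index $\nu(a)$ of Lemma~\ref{novikov-cir} through the computation, which yields the exponent $\tfrac{\alpha}{2}\sqrt{2p(2p-1)}\le\alpha p$; since a smaller exponent gives a stronger bound for $x\le 1$ and all such bounds are comparable for $x\ge 1$, your claim does follow after this repair, but the paper's pathwise argument is both shorter and delivers $x^{-\alpha p}$ directly.
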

%%%%%%%%%%%%%%%%%%%%%%%%%%%%%%%%%%%%%%%%%%%%%%%%%%%%%%%%%%%%%%%%%%%%%%%%%
\begin{proof} 
Let $Z_t=(X^x_t)^{2(1-\alpha )}$.  By the It\^o's  formula, 
\begin{align*}
Z_t=x^{2(1-\alpha )}+\int _0^t \beta  (Z_s) ds+2(1-\alpha )\sigma \int _0^t \sqrt {Z_s}dW_s, 
\end{align*}
where, for all $x>0$, the drift coefficient $\beta (x)$ is defined by 
\begin{align*}
\beta (x)=2(1-\alpha ){b(x^{\frac{1}{2(1-\alpha )}})}{x^{-\frac{2\alpha -1}{2(1-\alpha )}}}-\sigma 
^2(1-\alpha )(2\alpha -1). 
\end{align*}
From  {\rm (H1)}, $b(x)\geq b(0)- Kx$ and for all $x>0$, $\beta (x) \ge  \bar
\beta  (x)$, where we set 
\begin{align*}
\bar \beta (x) = 2(1-\alpha  ){b(0)}{x^{-\frac{2\alpha -1}{2(1-\alpha )}}}-2(1-\alpha )Kx-\sigma 
^2(1-\alpha )(2\alpha -1).
\end{align*}
For all $a\geq 0$ and  $\lambda(a)$ given by~\eqref{b}, we consider
$f(x) = \bar \beta (x) -  a + \lambda (a)x$. An easy computation shows that $f(x)$ 
has one minimum at the point $x^\star =
\left(\frac{b(0)(2\alpha -1)}{\lambda(a)-2(1-\alpha )K}\right)^{2(1-\alpha )}$.
Moreover, 
\begin{align*}
f(x^\star)
=\frac{b(0)^{2(1-\alpha )}}{(2\alpha -1)^{2\alpha -1}}\left(\lambda(a)-2(1-\alpha )K\right)^{2\alpha -1}-\left(a+\sigma 
^2(1-\alpha )(2\alpha -1)\right) 
\end{align*}
and when  $\lambda (a)$ is given by~\eqref{b}, $f(x^\star)=0$. We conclude
that $\beta  (x) \geq a -\lambda  (a) x$ and $(i)$ holds by the Comparison Theorem for 
the solutions of one-dimensional SDE. As a consequence, 
\begin{align*}
\Ee\exp\left(\mu \int _0^T\frac{ds}{(X^x_s)^{2(1-\alpha )}}\right)\leq
\Ee\left(\exp\left(\mu \int _0^T\frac{ds}{{r}_s(a)}\right)\right). 
\end{align*}
We want to apply the Lemma~\ref{novikov-cir}, on the exponential moments of 
the CIR process.  To this end, we must choose the constant 
$a$ such that $a \geq 4 (1-\alpha )^2\sigma  ^2$ and $\mu  \leq \frac{\nu ^2(a) (1-\alpha )^2 4 \sigma 
^2}{8}$, for $\nu (a)$ as in Lemma~\ref{novikov-cir}.
An easy computation shows that
$a = 4 (1-\alpha )^2\sigma  ^2\vee \left( 2(1-\alpha  )^2\sigma  ^2+(1-\alpha )\sigma  2\sqrt {2\mu }\right)$
is convenient and $(ii)$ follows by applying the Lemma~\ref{novikov-cir} to
the process $({r}_t(a),0\leq t\leq T)$. Thanks to  $(ii)$, the Novikov criteria applied to $M^x_t$ 
is clearly satisfied. 
Moreover,  by the integration by parts formula.
\begin{align*}
M^x_t
&=\left(\frac{X^x_t}{x}\right)^{\alpha }\exp\left(\int _0^t\left(-\alpha \frac{
b(X^x_s)}{X^x_s}+\alpha (1-\alpha )\frac{\sigma  ^2}{2}\frac{1}{(X^x_s)^{2(1-\alpha )}}\right)ds\right) 
\\ 
&\leq \left(\frac{X^x_t}{x}\right)^{\alpha }\exp(KT)\exp\left(\int _0^t\left(-\alpha \frac{ 
b(0)}{X^x_s}+\alpha (1-\alpha )\frac{\sigma  ^2}{2}\frac{1}{(X^x_s)^{2(1-\alpha )}}\right)ds\right).
\end{align*}
To end the proof, notice that it is possible to find a positive
constant $\lambda $ such that,  for any $x>0$, 
$-\frac{b(0)\alpha }{x}+\frac{\sigma 
^2\alpha (1-\alpha )}{2}\frac{1}{x^{2(1-\alpha )}}\leq \lambda  $. An easy
computation shows that
\begin{align*}
\underline{\lambda } =\frac{\alpha }{2}(2\alpha -1)\left[\frac{(1-\alpha )^{3-2\alpha }\sigma 
^2}{b(0)^{2(1-\alpha )}}\right]^{\frac{1}{2\alpha -1}}.
\end{align*}
is convenient.  Thus, 
$M^x_t\leq \left(\frac{X^x_t}{x}\right)^\alpha 
\exp\left((K+\underline{\lambda })T\right)$ and we
conclude by  using the Lemma~\ref{majo-moments-oX}. 
\end{proof}	

%%%%%%%%%%%%%%%%%%%%%%%%%%%%%%%%%%%%%%%%%%%%%%%%%%%%%%%%%%%%%%%%%%%%%%%%%
%%%%%%%%%%%%%%%%%%%%%%%%%%%%%%%%%%%%%%%%%%%%%%%%%%%%%%%%%%%%%%%%%%%%%%%%%
\subsubsection{On the associated Kolmogorov PDE} 
%%%%%%%%%%%%%%%%%%%%%%%%%%%%%%%%%%%%%%%%%%%%%%%%%%%%%%%%%%%%%%%%%%%%%%%%%
%%%%%%%%%%%%%%%%%%%%%%%%%%%%%%%%%%%%%%%%%%%%%%%%%%%%%%%%%%%%%%%%%%%%%%%%%
\begin{proposition}\label{proposition_edp_alpha>1/2}
Let $1/2 < \alpha  \leq 1$. Let $f$ be a $\er$-valued 
$C^4$ bounded function,  with bounded spatial derivatives up to the
order $4$. We consider the $\er$-valued function  defined on 
$[0,\,T]\times [0,+\infty )$ by $u(t,x)=\Ee f(X^x_{T-t})$.  
Then under  {\rm (H1)} and {\rm (H2)}, $u$ is in
$C^{1,4}([0,T]\times  (0,+\infty ))$ and there exists a positive constant  
$C$ depending on $f$, $b$ and $T$ such that
\begin{align*}
\left\|u \right\|_{L^\infty  \left([0,T]\times [0,+\infty )\right)} + 
\left\|\frac{\partial  u}{\partial  x}\right\|_{L^\infty  \left([0,T]\times 
[0,+\infty )\right)} &\leq C
\end{align*}
and for all $x>0$,
\begin{align*} 
\sup_{t\in[0,T]}\left|\frac{\partial  u}{\partial t}(t,x)\right| &\leq C(1 + x^{2\alpha }), \\
\mbox{ and } \sup_{t\in[0,T]} \sum _{k=2}^4 \left|\frac{\partial ^k u}{\partial 
x^k}\right|(t,x) &\leq C \left( 1 + \frac{1}{x^{q(\alpha )}}\right), 
\end{align*} 
where the constant $q(\alpha )>0$ depends only on $\alpha $. Moreover, $u(t,x)$ satisfies 
\begin{align}\label{edp_u_alpha>1/2}
\left\{\begin{array}{l}
\displaystyle  \frac{\partial  u}{\partial  t}(t,x) + b(x)\frac{\partial  u}{\partial  x}(t,x) +\frac{\sigma  ^2}{2}x^{2\alpha  }
\frac{\partial ^2u}{\partial  x^2}(t,x)=0,\;(t,x)\in [0,T]\times (0,+\infty  ), \\
\displaystyle  u(T,x)=f(x),\; x\in [0,+\infty  ).
\end{array} \right. 
\end{align}
\end{proposition}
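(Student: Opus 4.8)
The plan is to reproduce the architecture of the proof of Proposition~\ref{proposition_edp_alpha=1/2}: differentiate $u(t,x)=\Ee f(X^x_{T-t})$ under the expectation and express each spatial derivative through the derivatives of the stochastic flow $x\mapsto X^x_t$. The essential new feature is that for $1/2<\alpha\le 1$ the Girsanov parameter-shift that, in the CIR case, absorbed the flow derivative into an exponential of $b'$ no longer maps the family of processes into itself (the induced drift correction $\alpha\sigma^2 x^{2\alpha-1}$ is state dependent rather than constant), so the higher derivatives must be estimated directly. I would begin with the time derivative exactly as in the $\alpha=1/2$ case: It\^o's formula applied to $f(X^x_s)$ followed by differentiation in $t$ gives
\[
\frac{\partial u}{\partial t}(t,x) = -\Ee\Bigl(b(X^x_{T-t})f'(X^x_{T-t}) + \tfrac{\sigma^2}{2}(X^x_{T-t})^{2\alpha}f''(X^x_{T-t})\Bigr),
\]
which is continuous, and, using $|b(x)|\le b(0)+K|x|$, the boundedness of $f'$, $f''$, Jensen's inequality (since $\alpha<1$) and the positive moment bound of Lemma~\ref{majo-moments-oX}, is bounded by $C(1+x^{2\alpha})$.

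For the first spatial derivative I would justify differentiation under the expectation on $(0,+\infty)$ by the flow differentiability theorem (Theorem~V.39 in \cite{protter-90}), which applies since the coefficients are $C^1$ on $(0,+\infty)$ and Lemma~\ref{moment-inverse-HUW-gene} ensures that $\int_0^{\cdot}(X^x_s)^{\alpha-1}dW_s$ is a locally square integrable martingale. This yields $\frac{\partial u}{\partial x}(t,x)=\Ee\bigl(f'(X^x_{T-t})J^x_{T-t}\bigr)$, where the flow derivative $J^x_t=\frac{dX^x_t}{dx}$ solves a linear equation and therefore factorizes as
\[
J^x_t = \exp\Bigl(\int_0^t b'(X^x_s)\,ds\Bigr)\,M^x_t ,
\]
with $M^x_t$ the exponential martingale of Lemma~\ref{minoration_1}(iii). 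Since $M^x$ is a genuine $\Pp$--martingale, $\Ee M^x_{T-t}=1$, and with (H2) giving $b'$ bounded we obtain the \emph{uniform} bound $|\frac{\partial u}{\partial x}(t,x)|\le\|f'\|_\infty\exp(\|b'\|_\infty T)$ on all of $[0,+\infty)$; continuity of $\partial_x u$ on $[0,T]\times(0,+\infty)$ follows from flow continuity and dominated convergence, the relevant moments being locally uniform on $(0,+\infty)$. This settles the two $L^\infty$ bounds and places $u$ in $C^{1,1}([0,T]\times(0,+\infty))$.

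For the second, third and fourth spatial derivatives I would iterate the differentiation, now estimating the higher flow derivatives directly rather than removing them. The second order flow $K^x_t=\frac{d^2X^x_t}{dx^2}$ solves a linear equation whose sources carry $b''(X^x_s)(J^x_s)^2$ and $\alpha(\alpha-1)\sigma(X^x_s)^{\alpha-2}(J^x_s)^2$, and at each higher order one more differentiation of $\sigma x^\alpha$ lowers the exponent, producing singular coefficients $(X^x_s)^{\alpha-k}$, $k\le4$. The crucial point --- and the structural reason the argument succeeds for $\alpha>1/2$ while, in the CIR case, the Markov/time-homogeneity rewriting was needed to avoid the uncontrolled second flow derivative --- is that Lemma~\ref{moment-inverse-HUW-gene} supplies inverse moments of \emph{every} order, so these singular coefficients are all integrable. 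Writing $K^x$ (and the third and fourth flows) by the variation-of-constants formula with $J^x$ as fundamental solution, every expectation reduces to a product of a power of $M^x$ (coming from the powers of $J^x$), a bounded factor $\exp(\int b')$, and Lebesgue or It\^o integrals of negative powers of $X^x$. A H\"older splitting then controls the $M^x$ factor by the moment bound \eqref{moment-p-M} and the negative powers by Lemma~\ref{moment-inverse-HUW-gene}, the It\^o integrals being handled through the Burkholder--Davis--Gundy inequality. Tracking the accumulated negative exponent yields $\sup_{t}\sum_{k=2}^4|\partial_x^k u|(t,x)\le C(1+x^{-q(\alpha)})$.

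The main obstacle is precisely this last step: organizing the iterated variation-of-constants expansions of the flow derivatives so that, at each of the four orders, the simultaneous presence of high powers of $J^x$ (equivalently, of $M^x$, whose $L^p$ norm already blows up like $x^{-\alpha p}$ near the origin, by \eqref{moment-p-M}) and of the strongly singular coefficients $(X^x_s)^{\alpha-k}$ is resolved by a single admissible H\"older splitting, and reading off from it the explicit value of $q(\alpha)$. Once the four spatial derivatives and $\partial_t u$ are shown continuous on $[0,T]\times(0,+\infty)$ by dominated convergence (the bounds above being locally uniform away from the origin), we have $u\in C^{1,4}([0,T]\times(0,+\infty))$, and, as in the $\alpha=1/2$ case, these regularity and growth estimates allow us to adapt Theorem~6.1 in \cite{friedman-75} to conclude that $u$ solves the Cauchy problem~\eqref{edp_u_alpha>1/2}.
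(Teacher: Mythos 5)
Your treatment of the time derivative and of the first spatial derivative is exactly the paper's: the factorization $J^x_t=\exp\bigl(\int_0^t b'(X^x_s)\,ds\bigr)M^x_t$, the martingale property of $M^x$ from Lemma~\ref{minoration_1}(iii), and the resulting uniform bound $\|f'\|_\infty e^{KT}$ all appear verbatim in the paper's argument. One caveat: the interchange of $\partial_x$ and $\Ee$ is not quite covered by Protter's flow theorem plus ``dominated convergence with locally uniform moments''; it requires $L^2$-convergence of the difference quotients of the flow, which is precisely the content of Proposition~\ref{justification-hull-white} (proved in the appendix), whose polynomial/inverse-polynomial growth hypotheses on $g$, $h$, $k$ are tailored so that it can be re-invoked at every order of differentiation. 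You should cite it rather than treat this step as routine.

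For the derivatives of orders two to four you take a genuinely different route, and the comparison is instructive. You represent the higher-order flows by variation of constants on their linear SDEs, which forces you to estimate stochastic integrals with singular integrands such as $(X^x_s)^{\alpha-2}(J^x_s)^2$ via Burkholder--Davis--Gundy, and also (a point you do not address) requires negative moments of the fundamental solution $J^x$, i.e. moments of $(M^x)^{-1}$, which would have to be extracted from the exponential bound of Lemma~\ref{minoration_1}(ii). The paper sidesteps all of this with one trick: by integration by parts, $J^x_t=\frac{(X^x_t)^\alpha}{x^\alpha}\exp\Bigl(\int_0^t\bigl(b'(X^x_s)-\tfrac{\alpha b(X^x_s)}{X^x_s}+\tfrac{\sigma^2\alpha(1-\alpha)}{2(X^x_s)^{2(1-\alpha)}}\bigr)ds\Bigr)$, a representation of the flow derivative containing \emph{no stochastic integral}. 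Differentiating this closed form in $x$ gives $\frac{dJ^x_t}{dx}$ explicitly as a combination of $J^x_t$, $(J^x_t)^2/X^x_t$, and Lebesgue integrals of $J^x_s$ against negative powers of $X^x_s$ (this is displayed before \eqref{second-derivative}); iterating, every term of $\partial_x^n u$, $n=2,3,4$, is then bounded by Cauchy--Schwarz alone, using only $\Ee\sup_t (J^x_t)^p\le C(1+x^{-\alpha p})$ (estimate \eqref{moment-p-J1}, itself a consequence of \eqref{moment-p-M}) and the all-order inverse moments of Lemma~\ref{moment-inverse-HUW-gene}, yielding the explicit exponent $2(n-1)+n\alpha$ at order $n$, hence $q(\alpha)$. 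In other words, the ``single admissible H\"older splitting'' that you yourself flag as the main unresolved obstacle of your plan is exactly what the drift-only representation of $J^x_t$ renders unnecessary: your scheme is workable in principle, but the ingredient that makes the four-fold iteration tractable and the exponents readable is this closed-form expression, which your proposal lacks.
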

The following Proposition~\ref{justification-hull-white} allows us to compute the derivatives of
$u(t,x)$. Equation~\eqref{HUW_gene_flot} has locally Lipschitz 
coefficients on $(0,+\infty )$,  
with locally Lipschitz first order
derivatives. Then $X_t^{x}$ is continuously differentiable and if
we denote $J^{x}_t = \frac{dX_t^{x}}{dx}$,  
the process $(J^{x}_t,0\leq t\leq T)$ satisfies the linear
equation
\begin{align}\label{J-epsilon-hull-white}
J^{x}_t = 1+ \int _{0}^t  J^{x}_s b'(X_s^{x})ds
+ \int _{0}^t \alpha \sigma   J^{x}_s \frac{dW_s}{(X_s^{x})^{1-\alpha }}. 
\end{align}
\begin{proposition}\label{justification-hull-white}
Assume {\rm (H1)} and {\rm (H2)}.   
Let $g(x)$, $h(x)$ and $k(x)$ be some  $C^1$ functions on $(0,+\infty )$ such
that, there exist $p_1>0$ and $p_2>0$, 
\begin{align*}
\begin{array}{ll}
\forall  x>0,~&|g(x)| + |g'(x)| + |h(x)| + |h'(x)| +  |k'(x)|
\leq C\left(1 + x^{p_1} + \frac{1}{x^{p_2}}\right), \\
&|k(x)| \leq C\left(1 + \frac{1}{x^{2(1-\alpha )}}\right).
\end{array}	
\end{align*} 
Let $v$  be the $\er$-valued function defined on $[0,T]\times (0,+\infty )$
by
\begin{align*} 
v(t,x) = \Ee \left[g(X_t^x) \exp(\int _0^t k(X_s^x) ds)\right] 
+ \int _0^t \Ee\left[h(X_s^x) \exp(\int _0^s k(X_\theta ^x ) d\theta  )\right] ds.
\end{align*} 
Then $v(t,x)$ is of class $C^1$ with
respect to $x$ and
\begin{align*} 
\frac{\partial  v}{\partial  x}(t,x)=& 
\Ee \left[\exp(\int _0^t k(X_s^x) ds)\left(g'(X_t^x)J^x_t +
g(X_t^x) \int _0^t k'(X_s^x)J^x_s ds\right)\right] \\
&+ \int _0^t \Ee\left[\exp(\int _0^s k(X_\theta ^x )
d\theta )\left(h'(X_s^x)J^x_s + h(X_s^x)\int _0^s k'(X_\theta ^x )J^x_\theta 
 d\theta \right)\right] ds.
\end{align*} 
\end{proposition}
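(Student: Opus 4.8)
The plan is to differentiate under the expectation sign, justifying the interchange of $\partial_x$ and $\Ee$ by dominated convergence, with all the required integrability supplied by the inverse‑ and positive‑moment estimates of Lemmas~\ref{moment-inverse-HUW-gene}, \ref{minoration_1} and~\ref{majo-moments-oX}. The argument parallels the case $\alpha=\tfrac12$ (Proposition~\ref{justification-cir}); the only genuinely new feature is that here the coefficients of the flow and the weight $k$ degenerate at the origin, so every bound must be obtained on a compact interval $I=[x/2,2x]\subset(0,+\infty)$ and the several exponential factors combined carefully through H\"older's inequality.

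First I would record the explicit representation of the flow derivative. Since~\eqref{J-epsilon-hull-white} is a linear equation, its solution factorizes as
$$J^x_t=\exp\Bigl(\int_0^t b'(X^x_s)\,ds\Bigr)\,M^x_t,$$
with $M^x_t$ the exponential process~\eqref{M_t}. Under {\rm (H2)} $b'$ is bounded, so $J^x_t$ differs from $M^x_t$ by the deterministic factor $e^{\|b'\|_\infty t}$ only; in particular~\eqref{moment-p-M} gives, for every $p\ge1$, $\Ee\bigl[\sup_{t\le T}(J^x_t)^p\bigr]\le C(1+x^{-\alpha p})$. I would also need moments for the ``second order'' flow terms $\int_0^t k'(X^x_s)J^x_s\,ds$ appearing in the announced formula; these follow by combining the growth hypothesis on $k'$, the inverse moments of Lemma~\ref{moment-inverse-HUW-gene}, the positive moments of Lemma~\ref{majo-moments-oX} and the $J$‑moments just obtained, via H\"older.

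Next, writing $\Phi(x)=g(X^x_t)\exp(\int_0^t k(X^x_s)\,ds)$, the chain rule applied pathwise (the flow $x\mapsto X^x_\cdot$ being $C^1$ on $(0,+\infty)$) gives
$$\partial_x\Phi(x)=\exp\Bigl(\int_0^t k(X^x_s)\,ds\Bigr)\Bigl(g'(X^x_t)J^x_t+g(X^x_t)\int_0^t k'(X^x_s)J^x_s\,ds\Bigr),$$
and analogously for the integrand of the second term of $v$. The key step is then to produce, on each compact $I\subset(0,+\infty)$, an integrable dominating variable both for $\sup_{x\in I}|\partial_x\Phi(x)|$ and for the difference quotients $\epsilon^{-1}(\Phi(x+\epsilon)-\Phi(x))=\int_0^1\partial_x\Phi(x+\lambda\epsilon)\,d\lambda$. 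To control the weight I use $k(y)\le C(1+y^{-2(1-\alpha)})$, so that $\exp(\int_0^t k(X^x_s)\,ds)\le e^{CT}\exp\bigl(C\int_0^t (X^x_s)^{-2(1-\alpha)}\,ds\bigr)$, any power of which is integrable by~\eqref{novikov-hull-white-1}. Splitting the product $\exp(\int_0^t k\,ds)\cdot(\text{polynomial and inverse‑polynomial factors})\cdot J^x_t$ by H\"older's inequality, each factor is bounded by~\eqref{novikov-hull-white-1}, by Lemmas~\ref{moment-inverse-HUW-gene} and~\ref{majo-moments-oX}, and by~\eqref{moment-p-M}; since the resulting constants depend continuously on a positive lower bound for the starting point, they stay bounded over $I$, which yields the uniform integrability.

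The main obstacle is precisely this last estimate: the weight $\exp(\int_0^t k\,ds)$ and the martingale factor $M^x_t$ hidden in $J^x_t$ are both exponentials of functionals that blow up as the process approaches $0$, so one must allocate the H\"older exponents so that the power of $\int_0^t(X^x_s)^{-2(1-\alpha)}\,ds$ demanded remains within the range $\mu\le\nu^2(a)(1-\alpha)^2\sigma^2/2$ for which~\eqref{novikov-hull-white-1} holds. Once the dominated‑convergence argument is in place, the a.s.\ convergence of the difference quotients (from pathwise differentiability of the flow) yields the stated formula; a Fubini argument moves $\partial_x$ inside the outer $\int_0^t ds$ of the second term, and checking that the resulting expression is continuous in $x$ (again by dominated convergence) shows $v\in C^1$ and completes the proof.
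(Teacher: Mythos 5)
Your plan hinges on one step that you treat as free but which is in fact the whole content of the proposition: ``the chain rule applied pathwise (the flow $x\mapsto X^x_\cdot$ being $C^1$ on $(0,+\infty)$)'', followed by the identity $\epsilon^{-1}(\Phi(x+\epsilon)-\Phi(x))=\int_0^1\partial_x\Phi(x+\lambda\epsilon)\,d\lambda$. For the diffusion coefficient $x^\alpha$, which is not Lipschitz at the boundary point $0$, an almost-sure $C^1$ version of the flow, valid simultaneously for all starting points in the interval $[x,x+\epsilon]$ (this joint regularity is exactly what your fundamental-theorem-of-calculus identity requires), is not an off-the-shelf fact; it would itself need a localization-plus-comparison argument that you never give. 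The paper's proof is built precisely so as not to assume it: $J^x_t$ is \emph{defined} as the solution of the linear SDE~\eqref{J-epsilon-hull-white}, the difference-quotient process $J^{x,\epsilon}_t=\epsilon^{-1}(X^{x+\epsilon}_t-X^x_t)$ is introduced with its explicit exponential representation, its moments are bounded uniformly in $\epsilon$ via Lemma~\ref{minoration_1}~$(ii)$, then $\Ee|J^{x,\epsilon}_t-J^x_t|^2\to0$ is proved (by the It\^o-formula/Gronwall argument of Proposition~\ref{justification-cir}, which in the CIR case needs a stochastic time change and is simplified here because all inverse moments of $X^x_t$ are finite by Lemma~\ref{moment-inverse-HUW-gene}), and only then does one pass to the limit in the decomposition $\epsilon^{-1}(g(X^{x+\epsilon}_t)-g(X^x_t))-g'(X^x_t)J^x_t=A^\epsilon+B^\epsilon$. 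This $L^2$-convergence of difference quotients --- differentiability in the quadratic-mean sense, cf.\ Remark~\ref{H3} --- is the hard technical core, and your proposal has no counterpart to it: you have replaced it by an assertion.

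A secondary point confirming that you have located the difficulty in the wrong place: what you call ``the main obstacle'', namely allocating H\"older exponents so that the power $\mu$ of $\int_0^t(X^x_s)^{-2(1-\alpha)}\,ds$ stays below $\nu^2(a)(1-\alpha)^2\sigma^2/2$, is not an obstacle at all, since Lemma~\ref{minoration_1}~$(ii)$ is stated for \emph{every} $\mu\geq 0$; the constraint you quote is internal to its proof and is absorbed by choosing the comparison parameter $a=a(\mu)$. The sound parts of your plan (the factorization $J^x_t=\exp(\int_0^t b'(X^x_s)\,ds)\,M^x_t$, the bound $\Ee\sup_{t\le T}(J^x_t)^p\le C(1+x^{-\alpha p})$, and the uniform-integrability bookkeeping for the weight $\exp(\int_0^t k(X^x_s)\,ds)$, which the paper sidesteps by treating only $h=k=0$) would be needed to complete either argument, but they are complements to, not substitutes for, a proof that the difference quotients of the flow actually converge.
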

The proof is postponed in the Appendix~\ref{proofs-justification}.
%%%%%%%%%%% Proof of proposition Kolmogorov %%%%%%%%%%%%%%%%%%%%%%%%%
\begin{proof}[Proof of Proposition~\ref{proposition_edp_alpha>1/2}] 
Many arguments are similar to those of the proof of Proposition
\ref{proposition_edp_alpha=1/2}. Here, we restrict our attention on the main
difficulty which consists in obtaining  the upper bounds for the
spatial derivatives of $u(t,x)$ up to
the order 4. 
By Lemma~\ref{moment-inverse-HUW-gene}, for $x>0$, 
$(\int _0^t\frac{dW_s}{(X^x_s)^{1-\alpha }},0\leq t\leq T)$ is a locally square integrable
martingale. Then %(see Theorem V.51 in \cite{protter-90}),
$J^{x}_t$ is given by  
\begin{align*}
J^{x}_t = \exp\left(\int _{0}^t b'(X_s^{x})ds +
\alpha  \sigma  \int _{0}^t\frac{dW_s}{(X_s^{x})^{1-\alpha }}
-\frac{\sigma ^2\alpha ^2 }{2}\int _{0}^t\frac{ds}{(X_s^{x})^{2(1-\alpha )}}\right) 
\end{align*}
Or equivalently $J^{x}_t  = \exp\left(\int  _0^tb'(X^x_s)ds\right)M_t$,
where $(M_t)$ is the martingale defined in~\eqref{M_t} and
satisfying~\eqref{moment-p-M}. Thus , we have
 $J^{x}_t  = \exp\left(\int  _0^tb'(X^x_s)ds\right)M_t$. $b'$ being
bounded, $\Ee J^x_t \leq \exp(KT)$ and for all $p> 1$, 
\begin{align}\label{moment-p-J1}
\Ee\left(\sup_{t\in [0,T]} (J_{t}^{x})^p\right)
\leq C(T)\left(1+\frac{1}{x^{\alpha p}}\right).
\end{align}
By Proposition~\ref{justification-hull-white}, $u(t,x)$ is differentiable and
\begin{align*}
\frac{\partial  u}{\partial x}(t,x)=\Ee \left[f'(X_{T-t}^{x})J_{T-t}^{x}\right].
\end{align*}
Then, $|\frac{\partial  u}{\partial x}(t,x)|\leq  \|f'\|_{\infty } \exp(KT)$. The 
integration by parts formula gives  
\begin{align*} 
J^x_t = \frac{(X^x_t)^\alpha  }{x^\alpha  }\exp\left( \int _0^t \left( b'(X_s^x)-\frac{\alpha 
b(X_s^x)}{X_s^x}+\frac{\sigma  ^2\alpha (1-\alpha )}{2(X_s^x)^{2(1-\alpha )}}\right)ds\right).
\end{align*}
We apply again the Proposition~\ref{justification-hull-white} to
compute $\frac{\partial  ^2u}{\partial  x^2}(t,x)$: for any $x>0$, 
\begin{align*}
\frac{d J^x_t}{d x} = -\frac{\alpha J^x_t }{x} +\frac{\alpha (J^x_t)^2 }{X^x_{t}} +
J^x_t \left( \int _0^{t}\left(b''(X_s^x)-\frac{\alpha  
b'(X_s^x)}{X_s^x}+\frac{\alpha  b(X_s^x)}{(X_s^x)^2}
-\frac{\sigma ^2\alpha (1-\alpha )^2}{(X_s^x)^{3-2\alpha }}\right)J_s^{x}ds\right)
\end{align*}
and 
\begin{align}\label{second-derivative}
\begin{array}{l}
\frac{\partial  ^2u}{\partial  x^2}(t,x)=\Ee\left[f''(X_{T-t}^x)(J_{T-t}^{x})^2\right]
-\frac{\alpha }{x}\frac{\partial  u}{\partial x}(t,x)
+ \alpha  \Ee\left[\frac{(J_{T-t}^{x})^2}{X_{T-t}^x}f'(X_{T-t}^x)\right]\\
+\Ee\left[f'(X_{T-t}^x)J_{T-t}^{x}\int _0^{T-t}\left(b''(X_s^x)-\frac{\alpha  
b'(X_s^x)}{X_s^x}+\frac{\alpha  b(X_s^x)}{(X_s^x)^2}
-\frac{\sigma ^2\alpha (1-\alpha )^2}{(X_s^x)^{3-2\alpha }}\right)J_s^{x}ds\right].
\end{array}	
\end{align}
By using the Cauchy-Schwarz Inequality
with Lemma~\ref{moment-inverse-HUW-gene} and estimate
\eqref{moment-p-J1}, the second term on the right-hand side is bounded 
by 
\begin{align*}
&\|f'\|_\infty  \Ee\left[\sup_{t\in [0,T]} (J_{t}^{x})^2\int _0^{T-t}
\left|b''(X_s^x)-\frac{\alpha  
b'(X_s^x)}{X_s^x}+\frac{\alpha  b(X_s^x)}{(X_s^x)^2}
-\frac{\sigma ^2\alpha (1-\alpha )^2}{(X_s^x)^{3-2\alpha }}\right| ds\right]\\
&\leq  C(T)\left(1+\frac{1}{x^{2(1+\alpha )}}\right).
\end{align*}
By using similar arguments, it comes that 
\begin{align*} 
\left|\frac{\partial  ^2u}{\partial  x^2}(t,x)\right|
\leq C(T)\left(1+\frac{1}{x^{2+2\alpha }}\right).
\end{align*}
We apply again the Proposition~\ref{justification-hull-white} to 
compute $\frac{\partial  ^3u}{\partial  x^3}(t,x)$ from~\eqref{second-derivative}  and
next $\frac{\partial  ^4u}{\partial  x^4}(t,x)$, the main difficulty being the
number of terms to write. In view of the expression of
$\frac{ d J^x_s}{d x}$, each term can be bounded by 
$C(T)(1 + x^{-2(n-1) -n\alpha })$, where $n$ is the derivation order, by using 
the Cauchy-Schwarz Inequality and the upper-bounds $\Ee\sup_{t\in [0,T]}
(J_{t}^{x})^p\leq C(T)(1+x^{-\alpha p})$ and
$\sup_{t\in [0,T]} \Ee(X^x_t)^{-p} \leq C (1 + x^{-p})$.
%%%%%%%%%%%%%%%%%%%%%%%%%%%%%%%%%%%%%%%
\end{proof}	
%%%%%%%% END Proof of proposition Kolmogorov %%%%%%%%%%%%%%%%%%%%%%%%%

%%%%%%%%%%%%%%%%%%%%%%%%%%%%%%%%%%%%%%%%%%%%%%%%%%%%%%%%%%%%%%%%%%%%%%%%
%%%%%%%%%%%%%%%%%%%%%%%%%%%%%%%%%%%%%%%%%%%%%%%%%%%%%%%%%%%%%%%%%%%%%%%%
\subsubsection{On the approximation process}
%%%%%%%%%%%%%%%%%%%%%%%%%%%%%%%%%%%%%%%%%%%%%%%%%%%%%%%%%%%%%%%%%%%%%%%%%
%%%%%%%%%%%%%%%%%%%%%%%%%%%%%%%%%%%%%%%%%%%%%%%%%%%%%%%%%%%%%%%%%%%%%%%%%
When $1/2 < \alpha  < 1$, according to~\eqref{schema} and
\eqref{schema-continu-bis}, the discrete time process $(\oX)$ associated
to $(X)$ is 
\begin{align*}
\left\{
\begin{array}{l}
\oX_0 = x_0,\\
\oX_{t_{k+1}}=\left| \oX_{t_k} + b(\oX_{t_k})\Dt +
\sigma  \oX_{t_k}^\alpha   (W_{t_{k+1}}-W_{t_k})\right|,\;k=0,\ldots ,N-1, 
\end{array} \right.
\end{align*}
Its time continuous version
$(\oX_{t},0\leq t\leq T)$ satisfies 
\begin{align}\label{schema-continu-bis-HUL-gene}
\oX_t =  x_0 + \int _0^t \sgn(\oZ_s) b(\oX_{\eta (s)})ds
+ \sigma  \int _0^t \sgn(\oZ_s)\oX_{\eta (s)}^\alpha   dW_s + \frac{1}{2} L^0_t(\oX), 
\end{align} 
where for any $t\in [0,T]$, we set  
\begin{align}\label{Z_t-HUL-gene}
\oZ_t = \oX_{\eta (t)} + (t-\eta (t))b(\oX_{\eta (t)})
+ \sigma \oX_{\eta (t)}^\alpha   (W_t-W_{\eta (t)}),
\end{align}
so that, for all $t\in [0,\,T]$, $\oX_t=|\oZ_t|$. 

In the sequel, we will use the following notation:
\begin{align*}
\Oe(\Dt) = C(T)\exp\left(- \frac{C}{\Dt^{\alpha  -\frac{1}{2}}}\right),
\end{align*}
where the positive constants $C$ and $C(T)$ are independent of $\Dt$
but can depend on $\alpha  $, $\sigma  $ and $b(0)$. $C(T)$ is  non-decreasing in 
$T$. The quantity  $\Oe(\Dt)$ decreases exponentially fast  with
$\Dt$.

In this section, we are interested in the behavior of the processes
$(\oX)$ or $(\oZ)$ near $0$. We work under the
hypothesis {\rm (H3')}: $x_0 > \frac{b(0)}{\sqrt {2}}\Dt$.  We introduce the stopping
time $\tau $ defined by
\begin{align}\label{def_tD}
\tau  = \inf \left\{ s\geq 0; \oX_s < \frac{b(0)}{2}\Dt\right\}.
\end{align}
Under {\rm (H3')}, we are able to control probabilities like $\Pp(\tau 
\leq T)$. This is an important difference with the case $\alpha  =1/2$.
%%%%%%%%%%%%%%%%%%% PROP STOPPING TIME %%%%%%%%%%%%%%%%%%%%%%%%%%%%
\begin{lemma}\label{stoppingtime-1}
Assume {\rm (H1), (H2)} and {\rm (H3')}. Then 
\begin{align}\label{stoppingtime}
\Pp\left(\tau  \leq T\right)\leq  \Oe(\Delta  t).
\end{align}
\end{lemma}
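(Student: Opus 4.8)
The plan is to reduce the continuous-time event $\{\tau\le T\}$ to a union over the $N$ discretisation intervals and to show that on each interval the scheme leaves a suitable safe region only with probability $\Oe(\Dt)$. First I would write $\{\tau\le T\}\subseteq\bigcup_{k=0}^{N-1}B_k$ with $B_k=\{\min_{s\in[t_k,t_{k+1}]}\oX_s<\frac{b(0)}{2}\Dt\}$, since any visit of $\oX$ below the threshold occurs inside some interval. On $[t_k,t_{k+1})$ the coefficients are frozen, so conditionally on $\FF_{t_k}$ the process $\oZ_s=\oX_{t_k}+(s-t_k)b(\oX_{t_k})+\sigma\oX_{t_k}^\alpha(W_s-W_{t_k})$ is a Brownian motion with constant drift and volatility $\sigma\oX_{t_k}^\alpha$ started at $\oX_{t_k}$. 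Since $\oX_s=|\oZ_s|$ starts above the threshold, it can fall below $\frac{b(0)}{2}\Dt$ only after $\oZ$ has crossed the level $\frac{b(0)}{2}\Dt$ downwards, and the reflection principle then yields a clean conditional bound.

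The heart of the argument is a one-step estimate valid whenever the scheme sits in the slightly larger safe region $\{\oX_{t_k}\ge\frac{b(0)}{\sqrt2}\Dt\}$, the level dictated by (H3'). Two facts should be proved uniformly over $x:=\oX_{t_k}\ge\frac{b(0)}{\sqrt2}\Dt$: (i) the downcrossing probability $\Pp(\min_{s\in[t_k,t_{k+1}]}\oZ_s<\frac{b(0)}{2}\Dt\mid\FF_{t_k})\le\exp(-\frac{(x-\frac{b(0)}{2}\Dt)^2}{2\sigma^2 x^{2\alpha}\Dt})$, obtained by reflection (the positive part of the frozen drift only helps, while its negative part is controlled using $b(x)\ge b(0)-Kx$ together with $\Dt\le 1/(4K)$, so that $x+b(x)\Dt$ still exceeds the thresholds); and (ii) the endpoint estimate $\Pp(\oX_{t_{k+1}}<\frac{b(0)}{\sqrt2}\Dt\mid\FF_{t_k})=\Pp(\oZ_{t_{k+1}}\in(-\frac{b(0)}{\sqrt2}\Dt,\frac{b(0)}{\sqrt2}\Dt)\mid\FF_{t_k})$, bounded by a Gaussian tail of the same type. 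Since $\frac{(x-\frac{b(0)}{2}\Dt)^2}{x^{2\alpha}}$ is increasing for $x>\frac{b(0)}{2}\Dt$, its minimum over the safe region is attained at $x=\frac{b(0)}{\sqrt2}\Dt$ and is of order $\Dt^{2-2\alpha}$; dividing by $2\sigma^2\Dt$ produces an exponent of order $\Dt^{-(2\alpha-1)}$, which, because $\alpha>\frac12$, is at least of order $\Dt^{-(\alpha-1/2)}$, so both (i) and (ii) are bounded by $\Oe(\Dt)$ uniformly. This is exactly where $\alpha>\frac12$ is decisive: at scale $x\sim\Dt$ the one-step fluctuation $\sigma x^\alpha\sqrt{\Dt}\sim\Dt^{\alpha+1/2}$ is dominated by the upward drift $b(0)\Dt$, so escaping the safe region requires an anomalously large Brownian increment, of order $\Dt^{-(\alpha-1/2)}$ standard deviations.

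With the one-step estimate in hand I would propagate the safe region by induction. Setting $G_k=\{\oX_{t_k}\ge\frac{b(0)}{\sqrt2}\Dt\}$, (H3') gives $\Pp(G_0^c)=0$, and (ii) gives $\Pp(G_{k+1}^c)\le\Ee[\ind_{G_k}\Pp(\oX_{t_{k+1}}<\frac{b(0)}{\sqrt2}\Dt\mid\FF_{t_k})]+\Pp(G_k^c)\le\Oe(\Dt)+\Pp(G_k^c)$, whence $\Pp(G_k^c)\le N\,\Oe(\Dt)$. Then (i) gives $\Pp(B_k)\le\Ee[\ind_{G_k}\Pp(B_k\mid\FF_{t_k})]+\Pp(G_k^c)\le(1+N)\Oe(\Dt)$, and summing over the $N$ intervals yields $\Pp(\tau\le T)\le 2N^2\,\Oe(\Dt)$. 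Finally, since $N=T/\Dt$ and $\Oe(\Dt)$ decays faster than any power of $\Dt$, the polynomial prefactor $2N^2=2T^2/\Dt^2$ is absorbed into the exponential (at the cost of a smaller constant in the exponent), giving $\Pp(\tau\le T)\le\Oe(\Dt)$. The main obstacle is precisely items (i)–(ii): the reflection/Gaussian bound degenerates as $\oX_{t_k}\downarrow\frac{b(0)}{2}\Dt$, so one cannot work directly at the threshold of $\tau$; the fix is to iterate on the strictly larger level $\frac{b(0)}{\sqrt2}\Dt$ furnished by (H3'), which keeps $\oX_{t_k}$ a full $\Theta(\Dt)$ away from the threshold and renders the drift-dominance estimate non-degenerate.
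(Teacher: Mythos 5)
Your proof is correct and rests on the same skeleton as the paper's: the two thresholds $\frac{b(0)}{2}\Dt$ (defining $\tau$) and $\frac{b(0)}{\sqrt2}\Dt$ (the safe level supplied by {\rm (H3')}), a union bound over the $N$ steps, one-step Gaussian estimates, and the key observation that at scale $x\sim\Dt$ the noise $\sigma x^\alpha\sqrt{\Dt}\sim\Dt^{\alpha+\frac12}$ is dominated by the drift $b(0)\Dt$ precisely because $\alpha>\frac12$. You differ in two points of execution. First, the paper needs no induction on the safe region: it proves $\Pp(\oX_{t_k}\le\frac{b(0)}{\sqrt2}\Dt)\le\Oe(\Dt)$ \emph{unconditionally} for every $k$ (its estimate \eqref{oX-less}), by expanding the Gaussian exponent $\bigl(\oX_{t_{k-1}}(1-K\Dt)+b(0)(1-\tfrac{1}{\sqrt2})\Dt\bigr)^2/\bigl(2\sigma^2\oX_{t_{k-1}}^{2\alpha}\Dt\bigr)$, keeping the cross term, and splitting on $\{\oX_{t_{k-1}}\ge\sqrt{\Dt}\}$, where the quadratic term is $\ge 1/(8\sigma^2\Dt^{\alpha})$, versus $\{\oX_{t_{k-1}}<\sqrt{\Dt}\}$, where the cross term is $\ge C/\Dt^{\alpha-\frac12}$; a per-grid-point bound valid whatever the past means only one factor of $N$ has to be absorbed, instead of your $N^2$ (both absorptions are legitimate, as you observe). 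Second, for the infimum inside a step the paper invokes the exact law of the running minimum of a drifted Brownian motion (the erfc formula of Borodin--Salminen) together with a case analysis in $x$, whereas you remove the drift pathwise via $b(x)s\ge -Kx\Dt$ and apply the reflection principle; your route is more elementary and avoids both the explicit formula and the case distinction. One imprecision to fix: after drift removal your downcrossing bound should read $\exp\bigl(-\bigl(x(1-K\Dt)-\tfrac{b(0)}{2}\Dt\bigr)^2/\bigl(2\sigma^2x^{2\alpha}\Dt\bigr)\bigr)$ rather than with $x$ alone, and one must check $x(1-K\Dt)>\tfrac{b(0)}{2}\Dt$ on the safe region; with $\Dt\le 1/(4K)$ this reduces to $\tfrac{3}{4\sqrt2}>\tfrac12$, so the margin is still of order $\Dt$ and your exponent of order $\Dt^{-(2\alpha-1)}$ (hence the conclusion $\Oe(\Dt)$) is unaffected.
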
 
\begin{proof}
The first step of the proof consists in obtaining the following
estimate:  
\begin{align}\label{oX-less}
\forall  k\in \{0,\ldots ,N\},~\Pp\left(\oX_{t_k}\leq \frac{b(0)}{\sqrt {2}}\Delta 
t\right)\leq \Oe(\Delta  t).
\end{align}
Indeed, as $b(x)\geq b(0)-Kx$ for $x \geq 0$, for $k\ge 1$, 
\begin{align*}
\begin{array}{ll}
& \Pp\left(\oX_{t_k}\leq \frac{b(0)}{\sqrt {2}}\Delta t\right)\\
& \leq \Pp\left(W_{t_k}-W_{t_{k-1}}
\leq\frac{-\oX_{t_{k-1}}(1-K\Delta  t)-b(0)(1-\frac{1}{\sqrt {2}})\Delta  t}{\sigma 
\oX_{t_{k-1}}^\alpha }, \oX_{t_{k-1}} > 0 \right).
\end{array}	
\end{align*}
As $\Dt$ is sufficiently small, %($1-K\Dt\geq 1/2$),
by using the Gaussian inequality $\Pp(G\leq \beta ) \leq 1/2 \exp(- \beta ^2/2)$,  
for a standard Normal r.v. $G$ and $\beta  <0$, we get 
\begin{align*}
\begin{array}{ll}
&\Pp\left(\oX_{t_k}\leq \frac{b(0)}{\sqrt {2}}\Delta t\right)\\
&\leq \Ee\left[\exp\left(-\frac{\left(\oX_{t_{k-1}}(1-K\Delta t)
+b(0)(1-\frac{1}{\sqrt {2}})\Delta  t\right)^2}{2\sigma ^2\oX_{t_{k-1}}^{2\alpha }\Dt}\right)
\ind_{\{\oX_{t_{k-1}} > 0\}}\right] \\
&\leq \Ee\left[\exp\left(-\frac{\oX_{t_{k-1}}^{2(1-\alpha )}}
{8\sigma ^2\Dt}\right)\exp\left(-\frac{b(0)(1-\frac{1}{\sqrt {2}})}
{2\sigma  ^2\oX_{t_{k-1}}^{2\alpha -1}}\right)\ind_{\{\oX_{t_{k-1}} > 0\}}\right].
\end{array}	
\end{align*}
By separating the events $\left\{\oX_{t_{k-1}}\geq \sqrt {\Dt}\right\}$ and
$\left\{\oX_{t_{k-1}}<\sqrt {\Dt}\right\}$  in the expectation above, we
obtain 
\begin{align*}
\Pp\left(\oX_{t_k}\leq \frac{b(0)}{\sqrt {2}}\Delta 
t\right)\leq \exp\left(-\frac{1}{8\sigma 
^2\Dt^\alpha  }\right)+\exp\left(-\frac{b(0)(1-\frac{1}{\sqrt {2}})}{2\sigma 
^2(\Dt)^{\alpha -\frac{1}{2}}}\right)=\Oe(\Delta  t).
\end{align*}
Now we prove~\eqref{stoppingtime}. Notice that 
\begin{align*} 
\Pp\left(\tau  \leq T\right)
\leq \sum _{k=0}^{N-1}\Pp\left(\inf\limits_{t_k< s\leq  t_{k+1}}
\oZ_s\leq \frac{b(0)}{2}\Delta t~,~\oX_{t_{k}}> \frac{b(0)}{2}\Delta t\right). 
\end{align*} 
For each $k\in\{0,1,\ldots , N-1\}$, by using~\eqref{oX-less} and $b(x) \leq b(0) -K x$, we have
\begin{align*}
&\Pp\left(\inf\limits_{t_k< s\leq  t_{k+1}}\oZ_s\leq
\frac{b(0)}{2}\Delta t,~\oX_{t_{k}}> \frac{b(0)}{2}\Delta t\right)\\
&=\Pp\left(\inf\limits_{t_k< s\leq  t_{k+1}}\oZ_s\leq
\frac{b(0)}{2}\Delta t,~\oX_{t_{k}}>
\frac{b(0)}{2}\Delta t,~\oX_{t_k}\leq \frac{b(0)}{\sqrt {2}}\Dt\right)\\
&~~+\Pp\left(\inf\limits_{t_k< s\leq  t_{k+1}}\oZ_s \leq
\frac{b(0)}{2}\Delta t,~\oX_{t_k}> \frac{b(0)}{\sqrt {2}}\Dt\right)\\ 
&\leq \Pp\left(\oX_{t_k}\leq \frac{b(0)}{\sqrt {2}}\Dt\right)
+\Pp\left(\inf\limits_{t_k< s\leq  t_{k+1}}\oZ_s\leq
\frac{b(0)}{2}\Delta t,\oX_{t_k}> \frac{b(0)}{\sqrt {2}}\Dt\right)\\ 
&\leq \Oe(\Dt)
+\Ee \left\{\ind_{\left(\oX_{t_{k}}>\frac{b(0)}{\sqrt {2}}\Delta t\right)}\Pp\left(\inf\limits_{0< s\leq \Delta  t}\frac{x^{1-\alpha }}{\sigma }+\frac{b(0)-Kx}{\sigma  x^\alpha }s+
B_s\leq \frac{b(0)\Delta t}{2\sigma 
x^\alpha }\right)\left|_{x=\oX_{t_{k}}}\right.\right\}, 
\end{align*}
where $(B_t)$ denotes a Brownian motion independent of $(W_t)$. The
proof is ended if we show that  
\begin{align*} 
\Pp\left(\inf\limits_{0< s\leq \Delta  t}\frac{x^{1-\alpha }}{\sigma }
+\frac{(b(0)-Kx)}{\sigma  x^\alpha }s+
B_s\leq \frac{b(0)\Delta t}{2\sigma  x^\alpha }\right)\leq \Oe (\Dt),
\mbox{ for } x\geq \frac{b(0)}{\sqrt {2}}\Delta t.
\end{align*} 
We use the following formula (see \cite{borodin-salmien-96}): if
$(B^{\mu }_t,0\leq t\leq T)$ denotes a Brownian motion with drift $\mu $, starting
at $y_0$, then for all  $y\leq y_0$, 
\begin{align*}
\Pp\left(\inf\limits_{0<s<t}B^{\mu }_s\leq y\right)= 
\frac{1}{2}\text{erfc}\left(\frac{y_0-y}{\sqrt {2t}}+\frac{\mu \sqrt {t}}{\sqrt {2}}\right)
+\frac{1}{2}\exp(2\mu  (y-y_0))
\text{erfc}\left(\frac{y_0-y}{\sqrt {2t}}-\frac{\mu \sqrt {t}}{\sqrt {2}}\right), 
\end{align*}
where $\text{erfc}(z)=\frac{\sqrt {2}}{\sqrt {\pi }}\int _{\sqrt {2}z}^\infty  
\exp\left(-\frac{y^2}{2}\right)dy$, for all  $z\in \er$. We set
$\mu  = \frac{(b(0)-Kx)}{\sigma  x^\alpha }$, $y_0= \frac{x^{1-\alpha }}{\sigma }$ and we choose
$y = \frac{b(0)\Delta t}{2\sigma  x^\alpha }$ satisfying $y \leq y_0$, if $x>
\frac{b(0)}{\sqrt {2}}\Delta t$.  Then
\begin{align*}
&\Pp\left(\inf\limits_{0< s\leq
\Delta  t}\frac{x^{1-\alpha }}{\sigma }+\frac{(b(0)-Kx)}{\sigma  x^\alpha }s+
W_s\leq \frac{b(0)\Delta t}{2\sigma  x^\alpha }\right) \\
&= \frac{1}{2}\text{erfc}\left(\frac{x-\frac{b(0)\Delta t}{2}}{\sigma  x^\alpha \sqrt {2\Delta 
t}}+\frac{\left(b(0)-Kx\right)\sqrt {\Delta  t}}{\sigma  x^\alpha \sqrt {2}}\right)\\
&~~~+ \frac{1}{2}\exp\left(-\frac{2(b(0)-Kx)}{\sigma  ^2x^{2\alpha }}\left(x-\frac{b(0)\Delta  t}{2}\right)\right)\text{erfc}\left(\frac{x-\frac{b(0)\Delta t}{2}}{\sigma  x^\alpha \sqrt {2\Delta 
t}}-\frac{\left(b(0)-Kx\right)\sqrt {\Delta  t}}{\sigma  x^\alpha \sqrt {2}}\right)\\
&~~~:= A(x) + B(x) . 
\end{align*} 
For any  $z\geq 0$,
$\text{erfc}(z)\leq \exp(-z^2)$. Then,  if $\Dt$ is sufficiently small,
we have
\begin{align*}
A(x)
\leq \exp\left(-\frac{(x(1-K\Delta  t)+\frac{b(0)}{2}\Delta 
t)^2}{2\sigma  ^2x^{2\alpha}\Delta  t}\right) \leq 
\exp\left(-\frac{x^{2(1-\alpha )}}{8\sigma  ^2\Delta  t}\right)
\end{align*}
and, for $x\geq \frac{b(0)}{\sqrt {2}}\Delta t$, $A(x)
%%\leq 
%%\exp\left(-\frac{x^{2(1-\alpha )}}{8\sigma  ^2\Delta  t}\right)
\leq\exp\left(-\frac{2^\alpha  b(0)^{2(1-\alpha )}}{16\sigma  ^2~\Delta 
t^{2\alpha -1}}\right)=\Oe(\Dt)$. 
Now we consider $B(x)$.  If  $x\geq \frac{\frac{3}{2}b(0)\Delta t}{(1+K\Delta  t)}$, then 
as for $A(x)$, we have 
\begin{align*}
B(x) &\leq  \exp\left(-\frac{2(b(0)-Kx)}{\sigma  ^2x^{2\alpha }}\left(x-\frac{b(0)\Delta 
t}{2}\right)\right)
\exp\left(-\frac{(x-\frac{b(0)\Delta t}{2}-(b(0)-Kx)
\Delta t)^2}{\sigma ^2 x^{2\alpha }2\Delta  t}\right)\\
~&~~~= \exp\left(-\frac{(x(1-K\Delta t)+\frac{b(0)\Delta  t}{2})^2}{\sigma ^2 x^{2\alpha }2\Delta 
t}\right)\leq \exp\left(-\frac{x^{2(1-\alpha )}}{8\sigma  ^2\Delta  t}\right)
\end{align*}
and $B(x)\leq \exp\left(-\frac{2^\alpha ~b(0)^{2(1-\alpha )}}{16\sigma 
^2~\Delta t^{2\alpha -1}}\right)=\Oe(\Dt)$, for $x\geq \frac{\frac{3}{2}b(0)\Delta t}{(1+K\Delta  t)}$. \\
If $\frac{b(0)}{\sqrt {2}}\Delta t \leq x<\frac{\frac{3}{2}b(0)\Delta t}{(1+K\Delta  t)} $,
then $\frac{2(b(0)-Kx)}{\sigma  ^2x^{2\alpha }}(x-\frac{b(0)\Delta 
t}{2})\geq \frac{b(0)^2\Delta  t(\frac{1}{\sqrt {2}}-\frac{1}{2})}{\sigma 
^2x^{2\alpha }}$ and 
\begin{align*}
B(x) \leq  \exp\left(-\frac{2(b(0)-Kx)}{\sigma  ^2x^{2\alpha }}\left(x-\frac{b(0)\Delta 
t}{2}\right)\right)\leq \exp\left(-\frac{b(0)^2\Delta 
t(\frac{1}{\sqrt {2}}-\frac{1}{2})}{\sigma  ^2x^{2\alpha }}\right).
\end{align*}
For  $x\geq \frac{b(0)}{\sqrt {2}}\Delta t$, we get $B(x)
\leq \exp\left(-\frac{2^{2\alpha }b(0)^{2(1-\alpha )}(\frac{1}{\sqrt {2}}-\frac{1}{2})
(1+K\Delta  t)^{2\alpha }}{3^{2\alpha }\sigma  ^2(\Delta t)^{2\alpha -1}}\right)=\Oe(\Dt)$.
\end{proof}
%%%%%%%%%%%%%%%%%%% PROP STOPPING TIME %%%%%%%%%%%%%%%%%%%%%%%%%%%%

%%%%%%%%%%%%%%%%%%% PROP UN-SUR-Z  %%%%%%%%%%%%%%%%%%%%%%%%%%%%%%
\begin{lemma}\label{un-sur-Z}
Assume {\rm (H1), (H2)} and {\rm
(H3')}. Let $\tau $ be the stopping time defined in \eqref{def_tD}. For
all $p\geq 0$, there  
exists a positive constant $C$ depending on 
$b(0)$, $\sigma $, $\alpha $, $T$ and $p$ but not on $\Dt$, such that 
\begin{align}\label{un-sur-Zbis}
\forall  t\in[0,T], ~~\Ee\left(\frac{1}{\oZ_{t\wedge \tau }^p}\right)\leq C \left(1+\frac{1}{x_0^p}\right).
\end{align}
\end{lemma}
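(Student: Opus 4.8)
The plan is to apply It\^o's formula to $\oZ_{t\wedge\tau}^{-p}$. The first, and crucial, observation is that on the stochastic interval $[0,\tau]$ the process $(\oZ_s)$ stays \emph{strictly positive}; more precisely $\oZ_s\geq \frac{b(0)}{2}\Dt$ there. Indeed $\oZ_0=x_0>0$ by {\rm (H3')}, and $\oZ$ cannot vanish before $\tau$, since a zero of $\oZ$ would force $\oX=|\oZ|$ to reach $0<\frac{b(0)}{2}\Dt$, contradicting the definition~\eqref{def_tD} of $\tau$. As $\oZ$ does not touch $0$ on $[0,\tau]$ it carries no local time there, it is continuous across the grid points, $\oZ_s=\oX_s$ on $[0,\tau]$, and it solves $d\oZ_s=b(\oX_{\eta(s)})\,ds+\sigma \oX_{\eta(s)}^\alpha\,dW_s$. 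For $p>0$ (the case $p=0$ being trivial), $z\mapsto z^{-p}$ is $C^2$ on the range of $\oZ_{\cdot\wedge\tau}$, and since $\oZ_s^{-p-1}\leq C\Dt^{-p-1}$ on $[0,\tau]$ while $\oX_{\eta(s)}^{2\alpha}$ has finite moments (Lemma~\ref{majo-moments-oX}), the stochastic integral is a genuine martingale. Hence
\begin{equation*}
\Ee\,\oZ_{t\wedge\tau}^{-p}=x_0^{-p}+\Ee\int_0^{t\wedge\tau}\mathcal{A}(s)\,ds,\qquad
\mathcal{A}(s):=-p\,b(\oX_{\eta(s)})\oZ_s^{-p-1}+\tfrac{1}{2}p(p+1)\sigma^2\oX_{\eta(s)}^{2\alpha}\oZ_s^{-p-2}.
\end{equation*}

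Next I would bound $\mathcal{A}(s)$ on the ``good'' event $\{\oX_{\eta(s)}\leq 2\oZ_s\}$, where the frozen coefficient $\oX_{\eta(s)}$ is comparable to $\oZ_s$. There I use $\oX_{\eta(s)}^{2\alpha}\leq 2^{2\alpha}\oZ_s^{2\alpha}$, so the diffusion term is at most $C\oZ_s^{2\alpha-p-2}$. If moreover $\oZ_s\leq \delta_0$ for a small constant $\delta_0$ depending only on $p,b(0),K,\sigma,\alpha$, then $\oX_{\eta(s)}\leq 2\delta_0$ is small enough that $b(\oX_{\eta(s)})\geq b(0)/2$; since $2\alpha-1>0$ the factor $\oZ_s^{2\alpha-1}$ is small for $\oZ_s$ small, and choosing $\delta_0$ so that $\frac{1}{2}(p+1)\sigma^2 2^{2\alpha}\oZ_s^{2\alpha-1}\leq b(0)/2$ makes the negative drift term dominate the diffusion term, giving $\mathcal{A}(s)\leq 0$. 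If instead $\oZ_s>\delta_0$, every factor $\oZ_s^{-p-1},\oZ_s^{-p-2}$ is bounded by a constant and, using $|b(\oX_{\eta(s)})|\leq b(0)+2K\oZ_s$, one gets $\mathcal{A}(s)\leq C$ directly. Thus $\mathcal{A}(s)\leq C$ on $\{\oX_{\eta(s)}\leq 2\oZ_s\}$, with $C$ independent of $\Dt$.

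Finally, the main difficulty is the ``bad'' event $R:=\{\oX_{\eta(s)}>2\oZ_s\}$, on which $\oX_{\eta(s)}$ may be large while $\oZ_s$ is as small as $\frac{b(0)}{2}\Dt$. Here I use only the crude bound $|\mathcal{A}(s)|\ind_R\leq C\Dt^{-p-2}\bigl(1+\oX_{\eta(s)}+\oX_{\eta(s)}^{2\alpha}\bigr)\ind_R$, valid because $\oZ_s\geq\frac{b(0)}{2}\Dt$ on $[0,\tau]$. The key point is that $R$ is exponentially improbable: on $R$ the one-step increment $\oZ_s-\oX_{\eta(s)}=b(\oX_{\eta(s)})(s-\eta(s))+\sigma\oX_{\eta(s)}^\alpha(W_s-W_{\eta(s)})$ lies below $-\frac{1}{2}\oX_{\eta(s)}$, which for $\Dt\leq 1/(4K)$ forces the Gaussian increment $W_s-W_{\eta(s)}$ below a level of order $-\oX_{\eta(s)}^{1-\alpha}/(\sigma\sqrt{\Dt})$; conditioning on $\FF_{\eta(s)}$, using $\oX_{\eta(s)}\geq\frac{b(0)}{2}\Dt$ and the tail bound $\Pp(G\leq\beta)\leq\frac12 e^{-\beta^2/2}$ exactly as in the proof of Lemma~\ref{stoppingtime-1}, yields $\Pp(R)\leq\Oe(\Dt)$. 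Combining the Cauchy--Schwarz inequality, the uniform moment bounds of Lemma~\ref{majo-moments-oX}, and $\Pp(R)^{1/2}\leq \Oe(\Dt)$, the polynomial factor $\Dt^{-p-2}$ is absorbed and $\Ee[\mathcal{A}(s)\ind_R]$ stays bounded (in fact tends to $0$) uniformly in $\Dt$. Altogether $\Ee[\mathcal{A}(s)\ind_{s\leq\tau}]\leq C$, so integrating over $[0,t]$ gives $\Ee\,\oZ_{t\wedge\tau}^{-p}\leq x_0^{-p}+CT\leq C(1+x_0^{-p})$, which is~\eqref{un-sur-Zbis}. Notably no Gronwall argument is needed, since the integrand is dominated by a constant on the good event and by an exponentially small expectation on the bad one.
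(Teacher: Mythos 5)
Your proof is correct and takes essentially the same route as the paper's: It\^o's formula applied to $\oZ_{t\wedge\tau}^{-p}$ (using that $\oZ=\oX\ge \tfrac{b(0)}{2}\Dt$ on $[0,\tau]$, so there is no local time and the stochastic integral is a true martingale), a split on the event $\{\oZ_s\le \oX_{\eta(s)}/2\}$, whose probability is shown to be $\Oe(\Dt)$ by the same Gaussian-tail argument as in Lemma~\ref{stoppingtime-1} (this is exactly the paper's estimate~\eqref{moitiealpha}) so that Cauchy--Schwarz and the moment bounds of Lemma~\ref{majo-moments-oX} absorb the $\Dt^{-p-2}$ factor, and, on the complementary event, the comparison $\oX_{\eta(s)}\le 2\oZ_s$ together with the fact that, since $2\alpha-1>0$, the drift term $-p\,b(0)z^{-p-1}$ dominates the diffusion term $Cz^{-p-2(1-\alpha)}$ for small $z$. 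The only genuine difference is organizational: where the paper turns the good-event drift contribution into a term $C\int_0^t\Ee\bigl(\oZ_{s\wedge\tau}^{-p}\bigr)\,ds$ and closes with Gronwall's lemma, your threshold argument ($\mathcal{A}(s)\le 0$ below a fixed level $\delta_0$, $\mathcal{A}(s)\le C$ above it) makes the integrand bounded outright and dispenses with Gronwall.
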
 
\begin{proof}
First, we prove that 
\begin{align}\label{moitiealpha}
\forall  t\in [0,T],~~~
\Pp\left(\oZ_t\leq \frac{\oX_{\eta (t)}}{2}\right)\leq \Oe(\Delta  t).
\end{align}
Indeed, while proceeding as in  the proof of
Lemma~\ref{stoppingtime-1},  we have 
\begin{align*}
\Pp\left(\oZ_t\leq \frac{\oX_{\eta (t)}}{2} \right)
\leq
\Ee\exp\left(-\frac{\left(\oX_{\eta (t)}(1-2K(t-\eta (t)))+2b(0)(t-\eta (t))\right)^2}{8\sigma  
^2(t-\eta (t))\oX_{\eta (t)}^{2\alpha }}\right).
\end{align*}
By using $(a+b)^2\geq a ^2+2ab$, with $a=\oX_{\eta (t)}(1-2K(t-\eta (t)))$ and 
$b=2b(0)(t-\eta (t))$, 
\begin{align*}
\Pp\left(\oZ_t\leq \frac{\oX_{\eta (t)}}{2}\right)
\leq
\Ee\left(\exp\left(-\frac{\oX_{\eta (t)}^{2(1-\alpha )}(1-2K\Dt)^2}{8\sigma 
^2\Dt}\right)\exp\left(-\frac{b(0)(1-2K\Dt)}{2\sigma 
^2\oX_{\eta (t)}^{2\alpha -1}}\right)\right). 
\end{align*}
For $\Dt$ sufficiently small, 
\begin{align*}
\Pp\left(\oZ_t\leq \frac{\oX_{\eta (t)}}{2} \right)
\leq
\Ee\left(\exp\left(-\frac{\oX_{\eta (t)}^{2(1-\alpha )}}{32\sigma 
^2\Dt}\right)\exp\left(-\frac{b(0)}{4\sigma  ^2\oX_{\eta (t)}^{2\alpha -1}}\right)\right).
\end{align*}
By separating the events $\left\{\oX_{\eta (t)}\geq \sqrt {\Dt}\right\}$ and
$\left\{\oX_{\eta (t)}<\sqrt {\Dt}\right\}$  in the expectation above, we
obtain
\begin{align*}
\Pp\left(\oZ_t\leq \frac{\oX_{\eta (t)}}{2}\right)
\leq \exp\left(-\frac{1}{32\sigma ^2(\Delta t)^\alpha }\right)
+\exp\left(-\frac{b(0)}{4\sigma ^2(\Dt)^{\alpha -\frac{1}{2}}}\right) = \Oe(\Dt).
\end{align*}
Now we prove~\eqref{un-sur-Zbis}. Notice that $\oZ_{t\wedge  \tau } = \oX_{t\wedge 
\tau }$, by the It\^o's formula 
\begin{align*} 
\frac{1}{\oZ_{t\wedge  \tau }^p}= \frac{1}{x_0^p}-p\int _{0}^{t\wedge  \tau  }\frac{b(\oX_{\eta (s)})}{\oZ_s^{p+1}}ds -p\sigma  \int _{0}^{t\wedge  \tau  }\frac{\oX_{\eta (s)}^\alpha  }{\oZ_s^{p+1}}dW_s+p(p+1)\frac{\sigma  ^2}{2}\int _{0}^{t\wedge  \tau  }\frac{\oX_{\eta (s)}^{2\alpha } }{\oZ_s^{p+2}}ds.
\end{align*}
Taking the expectation and using again $b(x) \geq b(0) - Kx$, we have
\begin{align*}
\Ee\left(\frac{1}{\oZ_{t\wedge \tau }^p}\right)
\leq &\frac{1}{x_0^p}
- p\Ee\left(\int _{0}^{t\wedge  \tau }\frac{b(0)}{\oZ_s^{p+1}}ds\right)
+pK\Ee\left(\int _{0}^{t\wedge  \tau }\frac{\oX_{\eta (s)}}{\oZ_s^{p+1}}ds\right)\\
&+p(p+1)\frac{\sigma ^2}{2}\Ee\left(\int _{0}^{t\wedge  \tau  }\frac{\oX_{\eta (s)}^{2\alpha }
}{\oZ_s^{p+2}}ds\right).
\end{align*} 
By the definition of $\tau $ in~\eqref{def_tD}, 
\begin{align*}
\Ee\left(\int _{0}^{t\wedge  \tau }\frac{\oX_{\eta (s)}}{\oZ_s^{p+1}}ds\right) = \Ee\left(\int _{0}^{t\wedge \tau}\ind_{(\oZ_{s}\leq
\frac{\oX_{\eta (s)}}{2})}\frac{\oX_{\eta (s)}}{\oZ_{s}^{p+1}}ds\right)
+\Ee\left(\int _{0}^{t\wedge \tau}\ind_{(\oZ_{s}>
\frac{\oX_{\eta (s)}}{2})}\frac{\oX_{\eta (s )}}{\oZ_{s}^{p+1}}ds \right)\\
\leq \left(\frac{2}{b(0)\Dt}\right)^{p+1}T
\sup_{t\in [0,\,T]}\left[\Pp\left(\oZ_t\leq \frac{\oX_{\eta (t)}}{2}\right)\right]^{1/2}\sup_{t\in [0,\,T]}\left[\Ee\left(\oX_{\eta (t)}^2\right)\right]^{1/2}+2\int _{0}^{t}\Ee\left(\frac{1}{\oZ_{s\wedge \tau }^{p}}\right)ds.
\end{align*}
We conclude, by the Lemma~\ref{majo-moments-oX} and the upper-bound
\eqref{moitiealpha} that
\begin{align*}
\Ee\left(\int _{0}^{t\wedge  \tau }\frac{\oX_{\eta (s)}}{\oZ_s^{p+1}}ds\right) \leq C +
2\int _{0}^{t}\Ee\left(\frac{1}{\oZ_{s\wedge \tau }^{p}}\right)ds.
\end{align*}	
Similarly,
\begin{align*} 
\Ee\left(\int _{0}^{t\wedge  \tau  }\frac{\oX_{\eta (s)}^{2\alpha }}{\oZ_s^{p+2}}ds\right)
& =
\Ee \left(\int _{0}^{t\wedge  \tau  }\ind_{(\oZ_{s}\leq
\frac{\oX_{\eta (s)}}{2})}\frac{\oX_{\eta (s)}^{2\alpha }
}{\oZ_{s}^{p+2}}ds\right)
+\Ee \left(\int _{0}^{t\wedge  \tau  }\ind_{(\oZ_{s}>
\frac{\oX_{\eta (s)}}{2})}\frac{\oX_{\eta (s)}^{2\alpha }
}{\oZ_{s}^{p+2}}ds\right)\\
&\leq
\Ee \left(\int _{0}^{t\wedge  \tau  }\ind_{(\oZ_{s}\leq
\frac{\oX_{\eta (s)}}{2})}\frac{\oX_{\eta (s)}^{2\alpha }}{\oZ_{s}^{p+2}}ds\right)
+ 2^{2\alpha }\Ee\left(\int _{0}^{t\wedge  \tau  }\frac{ds}{\oZ_{s}^{p+2(1-\alpha )}}\right).
\end{align*}
By using again the Lemma~\ref{majo-moments-oX} and the upper-bound
\eqref{moitiealpha}, we have 
\begin{align*}
\Ee\left(\int _{0}^{t\wedge  \tau  }\ind_{(\oZ_{s}\leq
\frac{\oX_{\eta (s)}}{2})}\frac{\oX_{\eta (s)}^{2\alpha }}{\oZ_{s}^{p+2}}ds \right)
&\leq T\left(\frac{2}{b(0)\Dt}\right)^{p+2}\sup_{t\in
[0,\,T]}\left[\sqrt {\Pp\left(\oZ_t\leq
\frac{\oX_{\eta (t)}}{2}\right)}\sqrt {\Ee\left(\oX_{\eta (t)}^{4\alpha }\right)}\right]\\ 
&\leq T\left(\frac{2}{b(0)\Dt}\right)^{p+2}\Oe(\Dt)
\leq C. 
\end{align*}
Finally,
\begin{align*}
\Ee\left(\frac{1}{\oZ_{t\wedge \tau }^p}\right)\leq \frac{1}{x_0^p}
+ \Ee \int _0^{t\wedge  \tau } \left( -\frac{pb(0)}{\oZ_{s}^{p+1}}
+\frac{2^{2\alpha -1}p(p+1)\sigma  ^2}{\oZ_{s}^{p+2(1-\alpha )}}  \right) ds
+C \int _0^t\Ee\left(\frac{1}{\oZ_{s\wedge \tau }^{p}} \right) ds
+ C. 
\end{align*}
We can easily check that there exists a positive constant $C$
such that, for all $z >0$,
$\frac{-pb(0)}{z^{p+1}}+\frac{p(p+1)2^{2\alpha -1}\sigma ^2}{z^{p+2(1-\alpha )}}\leq C$.
Hence 
\begin{align*}
\Ee\left(\frac{1}{\oZ_{t\wedge 
\tau }^p}\right)\leq
\frac{1}{x_0^p}+ C \int _{0}^{t}\Ee\left(\frac{1}{\oZ_{s\wedge \tau }^{p}}\right)ds+C
\end{align*}
and we conclude by applying the Gronwall Lemma. 
\end{proof}
%%%%%%%%%%%%%%%%%%% END PROP UN-SUR-Z  %%%%%%%%%%%%%%%%%%%%%%%%%%%%%%

%%%%%%%%%%%%%%%%%%%%%%%%%%%%%%%%%%%%%%%%%%%%%%%%%%%%%%%%%%%%%%%%%%%%%
\subsection{Proof of Theorem~\ref{theorem-faible-HUW-gene}}
\label{end-proof}
%%%%%%%%%%%%%%%%%%%%%%%%%%%%%%%%%%%%%%%%%%%%%%%%%%%%%%%%%%%%%%%%%%%%%%%%%
%%%%%%%%%%%%%%%%%%%%%%%%%%%%%%%%%%%%%%%%%%%%%%%%%%%%%%%%%%%%%%%%%%%%%%%%%
As in the proof of Theorem~\ref{theorem-faible-CIR-gene}, we use the
Feynman--Kac  representation of the solution of the Cauchy problem 
\eqref{edp_u_alpha>1/2}, studied in the Proposition
\ref{proposition_edp_alpha>1/2}: for all 
$(t,x) \in [0,T]\times (0,+\infty )$, $\Ee f(X_{T-t}^x) = u(t,x)$. Thus, the
weak error becomes
\begin{align*}
\Ee f(X_T) - \Ee f(\oX_T) = \Ee \left(u(0,x_0) - u(T,\oX_T)\right). 
\end{align*}
Let $\tau$ be the stopping time defined in~\eqref{def_tD}. By
 Lemma~\ref{stoppingtime-1}, 
\begin{align*} 
\Ee \left( u(T\wedge \tD,\oZ_{T\wedge  \tD})- u(T,\oX_T)\right)
\leq  2 \left\|u\right\|_{L^\infty ([0,T]\times  [0,+\infty ])} \Pp\left(\tau  \leq T\right) \leq
\Oe(\Delta  t). 
\end{align*}	
We bound the error by 
\begin{align*} 
\left|\Ee \left(u(T,\oX_T)-u(0,x_0)\right)\right|
\leq 
|\Ee \left(u(T\wedge \tD,\oZ_{T\wedge  \tD})-u(0,x_0)\right) | + \Oe(\Delta  t)
\end{align*} 
and we are now interested in $\Ee \left(u(T\wedge \tD,\oZ_{T\wedge  \tD})-(u(0,x_0)\right)$. 
Let $L$  and $\LL_z$ the second order differential operators
defined for any $C^2$ function $g(x)$ by
\begin{align*} 
Lg(x) =b(x)\frac{\partial  g}{\partial  x}(x)+\frac{\sigma  ^2}{2}x^{2\alpha }\frac{\partial  ^2g}{\partial 
x^2}(x) ~~\mbox{ and }~~ 
\LL_z g(x)= b(z)\frac{\partial  g}{\partial  x}(x)+\frac{\sigma  ^2}{2}z^{2\alpha }\frac{\partial  ^2g}{\partial  x^2}(x).
\end{align*}
From Proposition~\ref{proposition_edp_alpha>1/2}, $u$ is in
$C^{1,4}([0,T]\times  (0,+\infty ))$ satisfying $\frac{\partial u}{\partial s}(t,x) + Lu(t,x) =0$.
$\oX_t$ has bounded moments and the stopped process
$(\oX_{t\wedge \tD})$=$(\oZ_{t\wedge  \tD})$ has negative moments.
Hence, applying the It\^o's formula, 
\begin{align*}
\Ee\left[u(T\wedge \tau,\,\oX_{T\wedge \tau })-u(0,x_0)\right]
=\Ee\int _0^{T\wedge \tau }\left(\LL_{\oX_{\eta (s)}} u-Lu\right)(s ,\oX_s)ds, 
\end{align*}
Notice that 
\begin{align*}
& \partial _\theta   (\LL_zu-Lu)+b(z)\partial _x (\LL_zu-Lu)
+\frac{\sigma ^2}{2}z^{2\alpha }\partial ^2_{x^2}(\LL_zu-Lu)\\
& =\LL_z^2u-2\LL_zLu+L^2u
\end{align*}
and by applying again  the It\^o's formula
between $\eta (s)$ and $s$ to $\left(\LL_{\oX_{\eta (s)}}
u-Lu\right)(s,\oX_{s})$, 
\begin{align*} 
&\Ee\left[u(T\wedge \tau,\,\oX_{T\wedge \tau })-u(0,\,x_0)\right]\\
&=
\int _0^T\int _{\eta (s)}^s\Ee\left[\ind_{\left(\theta \leq\tau \right)}\left(\LL_{\oX_{\eta (s)}}^2u-2\LL_{\oX_{\eta (s)}}
Lu+L^2u\right)(\theta ,\oX_{\theta })\right]d\theta  ds.
\end{align*}
$\left(\LL_z^2u-2\LL_zLu+L^2u\right)(\theta ,x)$ combines the 
derivatives of $u$
up to the order four with  $b$ and its derivatives up to the order two and
some power functions like the $z^{4\alpha }$ or $x^{2\alpha  -2}$.  When we value this
expression at the point $(z,x) = (\oX_{\eta (s\wedge  \tau ) },\oX_{\theta \wedge  \tau })$, with the
upper bounds on  
the derivatives of $u$ given in the
Proposition~\ref{proposition_edp_alpha>1/2} and the positive and negative 
moments of $\oX$ given in the Lemmas~\ref{majo-moments-oX} and \ref{un-sur-Z}, we get 
\begin{align*}
\left|\Ee\left[\ind_{\left(\theta \leq\tau \right)}\left(\LL_{\oX_{\eta (s)}}^2u-2\LL_{\oX_{\eta (s)}}Lu+L^2u\right)(\theta  ,\oX_{\theta  })\right]\right|\leq
C\left(1+\frac{1}{x_0^{q(\alpha )}}\right) 
\end{align*}
which implies the result of Theorem~\ref{theorem-faible-HUW-gene}. 
\appendix
\section{On the Cox-Ingersoll-Ross model}

In \cite{cox-ingersoll-al-85}, Cox, Ingersoll and  Ross proposed to
model the dynamics of the short term interest rate  as the solution of the
following stochastic differential equation
\begin{align}\label{cir}
\left\{ \begin{array}{l}
dr^x_t= (a -b r^x_t) dt + \sigma  \sqrt {r^x_t}dW_t, \\
r_0^x = x \geq 0, 
\end{array} \right. 
\end{align} 
where $(W_t,0\leq t\leq T)$ is a one-dimensional Brownian motion on
a probability space $(\Omega , \FF, \Pp)$, $a$ and $\sigma  $ are positive 
constants and $b\in \er$. For any $t\in [0,T]$, let
$\FF_t = \sigma  (s\leq t, W_s)$.
\begin{lemma}\label{moment-inverse-cir}
For any $x>0$ and any $p>0$, 
\begin{align}\label{egalite-moment-inverse-cir}
\begin{array}{ll}
\Ee\left[\frac{1}{(r^x_t)^p} \right]
 =&\frac{1}{\Gamma (p)}
\left( \frac{2 b}{\sigma  ^2 (1 - e^{- bt})}\right)^p \\
& ~~~\times \int _0^{1} (2-\theta ) \theta ^{p-1}(1-\theta  )^{\frac{2a}{\sigma ^2}-p-1}
\exp\left(-\frac{2 b x \theta  }{\sigma  ^2 (e^{bt} -1)}\right) d\theta ,
\end{array}	
\end{align}
where $\Gamma (p) = \int _0^{+\infty } u^{p-1} \exp(-u) du$, $p>0$, denotes the Gamma
function. 
Moreover, if $a > \sigma ^2$
\begin{align}\label{major-moment-inverse-cir-1}
\Ee\left[\frac{1}{r^x_t} \right] \leq  \frac{e^{bt}}{x}
\end{align} 
and, for any $p$ such that $1<p<\frac{2a}{\sigma ^2}-1$,  
\begin{align}\label{major-moment-inverse-cir-p}
\Ee\left[\frac{1}{(r^x_t)^p} \right] \leq \frac{1}{\Gamma (p)} \left( \frac{
2e ^{|b|t}}{\sigma ^{2}t}\right)^p\mbox{ or }
\Ee\left[\frac{1}{(r^x_t)^p} \right] \leq C(p,T) \frac{1}{x^p},
\end{align}
where $C(p,T)$ is a positive constant depending on $p$ and $T$.
\end{lemma}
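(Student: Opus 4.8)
The plan is to derive \eqref{egalite-moment-inverse-cir} from the explicit law of the CIR process \eqref{cir}, and then to read off \eqref{major-moment-inverse-cir-1} and \eqref{major-moment-inverse-cir-p} by elementary estimates on the resulting one-dimensional integral. Set $c=\frac{2b}{\sigma^2(1-e^{-bt})}$, $u=cxe^{-bt}$ and $q=\frac{2a}{\sigma^2}-1$. The starting point is the well-known fact that $2c\,r^x_t$ follows a non-central chi-squared law with $2(q+1)=\frac{4a}{\sigma^2}$ degrees of freedom and non-centrality parameter $2u$; equivalently its density at $y>0$ is
\begin{align*}
p(t,x,y)=c\,e^{-u-cy}\Bigl(\tfrac{cy}{u}\Bigr)^{q/2}I_q\bigl(2\sqrt{ucy}\bigr),\quad I_q(z)=\sum_{n\ge0}\frac{1}{n!\,\Gamma(n+q+1)}\Bigl(\tfrac z2\Bigr)^{2n+q},
\end{align*}
or, probabilistically, $r^x_t$ has the law of $\frac{1}{2c}\chi^2_{2(q+1)+2N}$ with $N$ a Poisson$(u)$ variable, so that conditionally on $\{N=n\}$ the variable $c\,r^x_t$ is Gamma$(q+1+n)$.

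First I would compute $\Ee[(r^x_t)^{-p}]=\int_0^\infty y^{-p}p(t,x,y)\,dy$ by inserting the Bessel series and integrating term by term against $e^{-cy}y^{\,n+q-p}$, each such integral being a Gamma function. This is licit precisely for $0<p<q+1=\frac{2a}{\sigma^2}$ (the $n=0$ term fixes the constraint), and gives
\begin{align*}
\Ee\bigl[(r^x_t)^{-p}\bigr]=c^p\,e^{-u}\sum_{n\ge0}\frac{u^n}{n!}\,\frac{\Gamma(n+q-p+1)}{\Gamma(n+q+1)}.
\end{align*}
Representing each ratio as a Beta integral $\frac{\Gamma(n+q-p+1)}{\Gamma(n+q+1)}=\frac{1}{\Gamma(p)}\int_0^1\theta^{p-1}(1-\theta)^{\,n+q-p}\,d\theta$, exchanging sum and integral (all terms are nonnegative, so Tonelli applies) and summing $\sum_n\frac{[u(1-\theta)]^n}{n!}=e^{u(1-\theta)}$ then yields the closed form \eqref{egalite-moment-inverse-cir}. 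I expect the main technical care to lie exactly here: justifying the termwise integration and the interchange of summation and integration, and pinning down the admissible range $p<2a/\sigma^2$ together with the exact constants.

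Next I would obtain the bounds from this representation using two elementary facts: $(1-\theta)^{q-p}\le1$ on $[0,1]$ as soon as $p\le q$, and $c\le\frac{2e^{|b|t}}{\sigma^2 t}$, the latter from $|1-e^{-bt}|\ge|b|\,t\,e^{-|b|t}$ (valid for both signs of $b$); note also the identity $c/u=e^{bt}/x$. For $p=1$ and $a>\sigma^2$ one has $q>1$, hence
\begin{align*}
\Ee\bigl[(r^x_t)^{-1}\bigr]=c\int_0^1(1-\theta)^{q-1}e^{-u\theta}\,d\theta\le c\int_0^1 e^{-u\theta}\,d\theta\le\frac{c}{u}=\frac{e^{bt}}{x},
\end{align*}
which is \eqref{major-moment-inverse-cir-1}. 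For $1<p<\frac{2a}{\sigma^2}-1=q$ the two inequalities in \eqref{major-moment-inverse-cir-p} follow from the same representation: bounding $e^{-u\theta}\le1$ and $\int_0^1\theta^{p-1}(1-\theta)^{q-p}\,d\theta\le\int_0^1\theta^{p-1}\,d\theta\le1$ gives $\Ee[(r^x_t)^{-p}]\le \frac{c^p}{\Gamma(p)}\le\frac1{\Gamma(p)}\bigl(\frac{2e^{|b|t}}{\sigma^2t}\bigr)^p$, while bounding $(1-\theta)^{q-p}\le1$ and extending the integral to $[0,\infty)$ gives $\int_0^1\theta^{p-1}e^{-u\theta}\,d\theta\le\Gamma(p)u^{-p}$, whence $\Ee[(r^x_t)^{-p}]\le(c/u)^p=(e^{bt}/x)^p\le e^{p|b|T}x^{-p}$, so that $C(p,T)=e^{p|b|T}$ works.

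Finally, the uniform-in-$x$ estimates can alternatively be read off directly from the Poisson--Gamma mixture, without the integral representation: since for fixed $p$ the negative-moment ratio $\Gamma(\alpha-p)/\Gamma(\alpha)$ is decreasing in the shape $\alpha$, conditioning on $N$ and using $N\ge0$ gives $\Ee[(r^x_t)^{-p}]\le c^p\,\Gamma(q+1-p)/\Gamma(q+1)$ for all $x>0$, which is already $x$-independent. Either route reduces the whole lemma to the single substantive step of identifying the law of $r^x_t$ and carrying out the termwise integration; the bounds themselves are then routine.
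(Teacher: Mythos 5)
Your proof is correct, and it follows a genuinely different route from the paper's. The paper writes $\Ee[(r^x_t)^{-p}]=\Gamma(p)^{-1}\int_0^{+\infty}u^{p-1}\,\Ee\, e^{-u r^x_t}\,du$, inserts the closed-form Laplace transform of $r^x_t$ (quoted from Lamberton--Lapeyre), and changes variables $\theta=2uL(t)/(2uL(t)+1)$ with $L(t)=\tfrac{\sigma^2}{4b}(1-e^{-bt})$, to arrive at the Beta-type integral; you instead start from the noncentral chi-squared transition density, expand the Bessel function, integrate term by term (which is where the constraint $p<2a/\sigma^2$ enters, exactly as you say), and resum through the Beta representation of $\Gamma(n+q-p+1)/\Gamma(n+q+1)$. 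The two computations are comparable in length; the paper's needs only the Laplace transform rather than the full density, while yours has two concrete benefits: the Poisson--Gamma mixture yields the $x$-uniform bound $c^p\,\Gamma(q+1-p)/\Gamma(q+1)$ with no integral estimate at all (via monotonicity of $\alpha\mapsto\Gamma(\alpha-p)/\Gamma(\alpha)$), and you actually prove the second inequality in \eqref{major-moment-inverse-cir-p}, with the explicit constant $C(p,T)=e^{p|b|T}$ --- a step the paper's proof passes over in silence, as it derives only the first bound there.

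One discrepancy must be flagged, and it is not a gap on your side: your closed form has no factor $(2-\theta)$, whereas \eqref{egalite-moment-inverse-cir} does. Your version is the correct one. Carrying out the paper's own substitution, $u=\theta/(2L(1-\theta))$, $du=d\theta/(2L(1-\theta)^2)$, $2uL+1=(1-\theta)^{-1}$, gives $\frac{1}{\Gamma(p)(2L)^p}\int_0^1\theta^{p-1}(1-\theta)^{2a/\sigma^2-p-1}e^{-xe^{-bt}\theta/(2L)}\,d\theta$, with no such factor; a cross-check at $x\to0^+$ gives the exact value $c^p\,\Gamma(q+1-p)/\Gamma(q+1)$, which matches your formula, while the paper's extra factor would multiply it by $2-p/(q+1)\neq1$. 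Tellingly, the paper's subsequent estimates silently drop the $(2-\theta)$: the step $\Ee[1/r^x_t]\le\frac{1}{2L(t)}\int_0^1e^{-xe^{-bt}\theta/(2L(t))}\,d\theta$ would be false with it, since $(2-\theta)(1-\theta)^{2a/\sigma^2-2}\to 2$ as $\theta\to0$. So the $(2-\theta)$ is a slip in the paper's statement, harmless for all downstream uses (it costs at most a factor $2$ in the bounds), and your derivation establishes the corrected identity together with all the stated inequalities.
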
 
\begin{proof}
By the definition of the Gamma function, for all $x>0$ and
$p>0$, $x^{-p}= \Gamma (p)^{-1} \int _0^{+\infty } u ^{p-1}\exp(-ux)du$, so that
\begin{align*}
\Ee\left[\frac{1}{(r^x_t)^p} \right] = \frac{1}{\Gamma  (p)} \int _0^{+\infty  } u
^{p-1} \Ee\exp(-u r^x_t) du.
\end{align*}
The Laplace transform of $r^x_t$ is given by 
\begin{align*}
\Ee \exp(-u r^x_t)  =
\frac{1}{(2u L(t) +1)^{2a/\sigma ^2}}
\exp\left( -\frac{u L(t) \zeta  (t,x) }{2 u L(t) +1}\right), 
\end{align*}
where $L(t) = \frac{\sigma  ^2}{4b}(1 - \exp(-bt))$ and
$\zeta  (t,x) =\frac{4xb}{\sigma  ^2(\exp(bt) -1)}=xe ^{-bt}/L(t)$, (see
e.g. \cite{lamberton-lapeyre-96}). Hence, 
\begin{align*}
\Ee\left[\frac{1}{(r^x_t)^p} \right] =  
\frac{1}{\Gamma (p)}\int _0^{+\infty } \frac{u^{p-1}}{(2uL(t)+1)^{2a/\sigma ^2}}
\exp\left( -\frac{u L(t) \zeta  (t,x) }{2 u L(t) +1}\right) du. 
\end{align*}
By changing the variable
$\theta=2\frac{uL(t)}{2uL(t)+1}$ in
the integral above, we obtain 
\begin{align*}
\Ee\left[\frac{1}{(r^x_t)^p} \right]
 = \frac{1}{2^p\Gamma (p)L(t)^p}\int _0^{1} (2-\theta )\theta ^{p-1}(1-\theta  )^{\frac{2a}{\sigma ^2}-p-1}
\exp\left(-\frac{x e^{-bt}\theta  }{2 L(t)}\right) d\theta , 
\end{align*}
from which  we deduce \eqref{egalite-moment-inverse-cir}. Now if $a > 
\sigma  ^2$, we have for $p=1$
\begin{align*} 
\Ee\left[\frac{1}{r^x_t} \right] \leq  \frac{1}{2 L(t)}\int _0^{1}
\exp\left(-\frac{x e^{-bt}\theta  }{2 L(t)}\right) d\theta  \leq \frac{e^{bt}}{x}
\end{align*} 
and for $1<p<\frac{2a}{\sigma ^2}-1$, 
$\Ee\left[\frac{1}{(r^x_t)^p} \right]  \leq  \frac{1}{2^p\Gamma (p)L(t)^p} =
\frac{2^p |b|^p}{\sigma ^{2p}\Gamma (p)(1-e^{-|b|t})^p}$ 
which gives \eqref{major-moment-inverse-cir-p}, by noting that
$(1-e^{-|b|t})\geq |b|te ^{-|b|t}$.  
\end{proof}
%%%%%%%%%%%%%%%%%%%%%%%%%%%%%%%%%%%%%%%%%%%%%%%%%%%%%%%%%%%%%%%%%%
\begin{lemma}\label{novikov-cir}
If $a \geq \sigma ^2/2$ and $b\geq 0$, there exists a constant $C$ depending on $a$,
$b$, $\sigma $ and  $T$, such that 
\begin{align}
\sup_{t\in [0,T]}\Ee\exp\left(\frac{\nu ^2 \sigma  ^2}{8}\int _0^t\frac{ds}{r^x_s}\right)
\leq  C\left( 1 + {x^{-\frac{\nu }{2}}} \right), 
\end{align}
where $\nu  = \frac{2a}{\sigma ^2}-1 \geq 0$. 
\end{lemma}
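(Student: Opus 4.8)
The plan is to reduce the exponential functional to a positive moment of an auxiliary CIR process via a Girsanov change of measure, exploiting that $\mu:=\frac{\nu^2\sigma^2}{8}$ is precisely the \emph{critical} value for which the relevant quadratic degenerates into a perfect square. First I would apply It\^o's formula to $\ln r^x_t$ to obtain the pathwise identity
\[
\sigma\int_0^t\frac{dW_s}{\sqrt{r^x_s}}=\ln\frac{r^x_t}{x}+bt-\Bigl(a-\tfrac{\sigma^2}{2}\Bigr)\int_0^t\frac{ds}{r^x_s},
\]
which is the key tool: it trades the stochastic integral $\int_0^t (r^x_s)^{-1/2}\,dW_s$ for $\ln r^x_t$ and the functional $\int_0^t (r^x_s)^{-1}\,ds$ that we want to control.

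Next, for $\lambda\in\er$ I would consider the Dol\'eans--Dade exponential $\mathcal{E}_t=\exp\bigl(\lambda\int_0^t (r^x_s)^{-1/2}dW_s-\frac{\lambda^2}{2}\int_0^t (r^x_s)^{-1}ds\bigr)$. Substituting the identity, the coefficient of $\int_0^t (r^x_s)^{-1}ds$ in $\ln\mathcal{E}_t$ equals $-\bigl(\frac{\lambda}{\sigma}(a-\frac{\sigma^2}{2})+\frac{\lambda^2}{2}\bigr)$; forcing this to equal $\mu$ amounts to solving $\lambda^2+\sigma\nu\lambda+\frac{\sigma^2\nu^2}{4}=0$, i.e. $(\lambda+\frac{\sigma\nu}{2})^2=0$, whence the double root $\lambda=-\frac{\sigma\nu}{2}$. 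With this choice one gets the clean representation
\[
\exp\Bigl(\mu\int_0^t\frac{ds}{r^x_s}\Bigr)=\mathcal{E}_t\,\Bigl(\frac{r^x_t}{x}\Bigr)^{\nu/2}e^{\nu bt/2}.
\]
By Girsanov's theorem, under the measure with density $\mathcal{E}_t$ the process $r^x$ is again a CIR process but with the shifted drift parameter $a'=a+\sigma\lambda=\frac{\sigma^2}{2}$. Hence taking $\Pp$-expectation and passing to this measure turns the left-hand side into $e^{\nu bt/2}x^{-\nu/2}\,\Ee_{\Qq}\bigl[(r^x_t)^{\nu/2}\bigr]$, a plain positive moment of a CIR process, bounded by $C(T)(1+x^{\nu/2})$; dividing by $x^{\nu/2}$ then produces exactly the announced bound $C(1+x^{-\nu/2})$.

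The step needing care, which I expect to be the main obstacle, is the rigorous justification of the change of measure: $\mathcal{E}$ is a priori only a local martingale and the factor $(r^x_t)^{\nu/2}$ is unbounded. I would handle this by localisation: setting $\tau_n=\inf\{t\ge0:\,r^x_t\le 1/n\}$, so that $\int_0^{t\wedge\tau_n}(r^x_s)^{-1}ds\le nt$ is bounded and $(\mathcal{E}_{t\wedge\tau_n})$ is a genuine martingale (Novikov's criterion being trivial). Running the computation up to $t\wedge\tau_n$ gives, uniformly in $n$,
\[
\Ee\exp\Bigl(\mu\int_0^{t\wedge\tau_n}\frac{ds}{r^x_s}\Bigr)=e^{\nu b(t\wedge\tau_n)/2}x^{-\nu/2}\,\Ee_{\Qq_n}\bigl[(r^x_{t\wedge\tau_n})^{\nu/2}\bigr]\le e^{\nu bT/2}x^{-\nu/2}C(T)(1+x^{\nu/2}),
\]
where the positive moment is controlled uniformly in $n$ by a standard It\^o--Gronwall estimate for the CIR process (as in Lemma~\ref{majo-moments-oX}) and I used $b\ge0$. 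Finally, since $a\ge\sigma^2/2$ the Feller test guarantees that $r^x$ never reaches $0$, so $\tau_n\uparrow\infty$ $\Pp$-a.s.; letting $n\to\infty$ and invoking monotone convergence removes the localisation and yields the stated inequality after taking the supremum over $t\in[0,T]$.
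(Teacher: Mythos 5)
Your proposal is correct, and it reaches the bound by a genuinely different route than the paper. The paper passes to the Bessel coordinate $H_t=\frac{2}{\sigma}\sqrt{r^x_t}$, performs a first Girsanov change of measure to remove the mean-reverting drift $-\frac{b}{2}H_t\,dt$, and then invokes the Geman--Yor absolute-continuity relation between the Bessel laws of index $\nu$ and index $0$, which converts $\exp\bigl(\frac{\nu^2}{2}\int_0^t R_s^{-2}ds\bigr)$ into the moment $\Ee^{(0)}\bigl[(\sigma R_t/2\sqrt{x})^{\nu}\bigr]$ of a two-dimensional Bessel process, computed explicitly via a planar Brownian motion. You instead build a single change of measure directly on the CIR process: the log-identity trades the stochastic integral for $\ln r^x_t$ and $\int_0^t (r^x_s)^{-1}ds$, the double root $\lambda=-\frac{\sigma\nu}{2}$ makes $\mu=\frac{\nu^2\sigma^2}{8}$ exactly absorbable into the Dol\'eans--Dade exponential, and under the new measure $r^x$ is again CIR with drift parameter $\frac{\sigma^2}{2}$ \emph{and the same} $b$, so the problem reduces to a plain positive moment (your $\lambda$ is, in CIR coordinates, precisely the composition of the paper's two measure changes, and the criticality of $\mu$ is the same phenomenon that gives the exponent $\nu^2/2$ in Geman--Yor). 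What each buys: the paper gets to cite a known result and compute the final moment in closed form; your argument is self-contained, handles the martingale property honestly by localisation at $\tau_n$ (where Novikov is trivial) plus monotone convergence, and -- notably -- keeps the constant uniform in $x$, since the only $b$-dependence left is $e^{\nu bT/2}$. By contrast, the paper bounds its first Girsanov density crudely by $\ZZ_t\le \exp\bigl(\frac{b}{\sigma^2}x+T\frac{ab}{\sigma^2}\bigr)$, which, taken literally, injects a factor $e^{bx/\sigma^2}$ into the final estimate and spoils the claimed $x$-uniformity (this is repairable by retaining the discarded term $-\frac{b}{4}H_t^2$ through the Bessel computation); your one-step change of measure sidesteps that issue entirely because the mean reversion is never removed. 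One cosmetic slip: in your localised display you pull $e^{\nu b(t\wedge\tau_n)/2}$ out of the expectation as if $t\wedge\tau_n$ were deterministic; the correct statement keeps it inside, $\Ee\exp\bigl(\mu\int_0^{t\wedge\tau_n}\frac{ds}{r^x_s}\bigr)=x^{-\nu/2}\,\Ee_{\Qq_n}\bigl[e^{\nu b(t\wedge\tau_n)/2}(r^x_{t\wedge\tau_n})^{\nu/2}\bigr]$, but since you immediately majorise this factor by $e^{\nu bT/2}$ using $b\ge 0$, the inequality you actually use is valid and nothing is lost.
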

\begin{proof}
For any $t\in [0,T]$, we set $H_t=\frac{2}{\sigma  }\sqrt {r^x_t}$, so that
$\Ee\exp(\frac{\nu ^2 \sigma  ^2}{8}\int _0^t\frac{ds}{r^x_s})$
$= \Ee\exp(\frac{\nu ^2}{2}\int _0^t\frac{ds}{H^2_s})$. The process 
$(H_t,t\in [0,T])$ solves 
\begin{align*}
dH_t = \left(\frac{2a}{\sigma  ^2}-\frac{1}{2}\right)\frac{dt}{H_t}
-\frac{b}{2}H_tdt+dW_t,\;\;H_0=\frac{2}{\sigma }\sqrt {x}. 
\end{align*}
For any $t\in [0,T]$, we set $B_t = H_t - H_0 - \int _0^t
\left(\frac{2a}{\sigma  ^2}-\frac{1}{2}\right)\frac{ds}{H_s}$.
Let $(\ZZ_t,t\in [0,T])$ defined by
\begin{align*}
\ZZ_t = \exp\left(
- \int _0^t \frac{b}{2} H_s dB_s - \frac{b^2}{8}\int _0^t H^2_s ds \right).
\end{align*}
By the Girsanov Theorem, under the probability $\Qq$ such that
$\frac{ d \Qq}{ d \Pp}\bigg|_{\FF_t} = \frac{1}{\ZZ_t}$,
$(B_t,t\in [0,T])$ is a Brownian motion. Indeed $(H_t,t\in [0,T])$ solves
\begin{align*}
dH_t = \left(\frac{2a}{\sigma  ^2}-\frac{1}{2}\right)\frac{dt}{H_t}
+dB_t,\;t\leq T,\;\;H_0=\frac{2}{\sigma }\sqrt {x}
\end{align*}
and  under $\Qq$, we note that $(H_t)$ is a Bessel process with index
$\nu =\frac{2a}{\sigma  ^2}-1$. Moreover, by the integration by parts formula,
$\int _0^t 2 H_s dB_s = H^2_t - H^2_0 - \frac{4a}{\sigma  ^2}t$ and 
\begin{align*}
\ZZ_t = \exp\left(-\frac{b}{4}H^2_t  - \frac{b^2}{8}\int _0^t H^2_s ds
 + \frac{b}{\sigma  ^2}x + t\frac{b a}{\sigma  ^2}\right)
\leq \exp\left(\frac{b}{\sigma  ^2}x + T \frac{b a}{\sigma  ^2}\right). 
\end{align*}
Now, denoting by $\Ee^{\Qq}$ the expectation relative to $\Qq$,
\begin{align*}
\Ee\exp\left(\frac{\nu ^2}{2}\int _0^t\frac{ds}{H^2_s}\right) & =
\Ee ^{\Qq}
\left[\exp\left(\frac{\nu ^2}{2}\int _0^t\frac{ds}{H^2_s}\right)\ZZ_t\right] \\
& \leq \exp\left(\frac{b}{\sigma  ^2}x + T \frac{b a}{\sigma  ^2}\right)
\Ee^{\Qq}
\left[\exp\left(\frac{\nu ^2}{2}\int _0^t\frac{ds}{H^2_s}\right) \right].
\end{align*}
Let $\Ee_{\frac{2}{\sigma  }\sqrt {x}}^{(\nu )}$ denotes the expectation relative to
$\Pp_{\frac{2}{\sigma  }\sqrt {x}}^{(\nu )}$, the law on $C(\er^+,\er^+)$ of the
Bessel process with index $\nu $, starting at $\frac{2}{\sigma  }\sqrt {x}$.
The next step uses the following change of probability
measure, for $\nu  \geq 0$ (see Proposition 2.4 in \cite{geman-yor-93}).
\begin{align*}
\Pp_{\frac{2}{\sigma  }\sqrt {x}}^{(\nu )} \bigg|_{\sigma  (R_s, s\leq t)} =
\left(\frac{\sigma  R_t}{2 \sqrt {x}}\right)^\nu  \exp\left( - \frac{\nu  ^2}{2}\int _0^t
\frac{ds}{R_s^2}\right) \Pp_{\frac{2}{\sigma  }\sqrt {x}}^{(0)}\bigg|_{\sigma  (R_s,
s\leq t)}, 
\end{align*}
where $(R_t,t\geq 0)$ denotes the canonical process on
$C(\er^+,\er^+)$. Then, we obtain that 
\begin{align*}
\Ee\exp\left(\frac{\nu ^2}{2}\int _0^t\frac{ds}{H^2_s}\right)
& \leq 
\exp\left(\frac{b}{\sigma  ^2}x + T \frac{b a}{\sigma  ^2}\right)
\Ee_{\frac{2}{\sigma  }\sqrt {x}}^{(\nu )}
\left[\exp\left(\frac{\nu ^2}{2}\int _0^t\frac{ds}{R^2_s}\right) \right]\\ 
& \leq 
\exp\left(\frac{b}{\sigma  ^2}x + T \frac{b a}{\sigma  ^2}\right)
\Ee_{\frac{2}{\sigma  }\sqrt {x}}^{(0)}\left[
\left(\frac{\sigma  R_t}{2 \sqrt {x}}\right)^\nu \right]. 
\end{align*}
It remains to compute $\Ee_{\frac{2}{\sigma  }\sqrt {x}}^{(0)}\left[
\left(\frac{\sigma  R_t}{2 \sqrt {x}}\right)^\nu \right]$.
Let $(W^1_t,W^2_t,t\geq 0)$ be a two 
dimensional Brownian motion. Then
\begin{align*}
\Ee_{\frac{2}{\sigma  }\sqrt {x}}^{(0)}\left[\left(\frac{\sigma  R_t}{2
\sqrt {x}}\right)^\nu \right] = \left(\frac{\sigma }{2 \sqrt {x}}\right)^\nu 
\Ee\left[ \left((W^1_t)^2
+ (W^2_t + \frac{2\sqrt {x}}{\sigma  })^2\right)^{\frac{\nu }{2}}\right]
\end{align*}
and an easy computation shows that 
$\Ee_{\frac{2}{\sigma  }\sqrt {x}}^{(0)}\left[\left(\frac{\sigma  R_t}{2
\sqrt {x}}\right)^\nu \right] \leq C(T) \left(1 + x^{-\frac{\nu }{2}}\right)$. 
\end{proof}
%%%%%%%%%%%%%%%%%%%%%%%%%%%%%%%%%%%%%%%%%%%%%%%%%%%%%%%%%
%%% DERIVATIVES OF THE FLOWS %%%%%%%%%%%%%%%%%%%%%%%%%%%%
%%%%%%%%%%%%%%%%%%%%%%%%%%%%%%%%%%%%%%%%%%%%%%%%%%%%%%%%%
\section{Proofs of Propositions \ref{justification-cir} and \ref{justification-hull-white}}
\label{proofs-justification}
\begin{proof}[Proof of Proposition \ref{justification-cir}]
To simplify the presentation, we consider only the case when $k(x)$ and
$h(x)$ are nil. For any $\epsilon >0$ and $x>0$, we define  for all $t\in
[0,\,T]$, the process
$J_t^{x,\epsilon}=\frac{1}{\epsilon}(X_t^{x+\epsilon}-X_t^x)$, satisfying 
\begin{align*} 
J_t^{x,\epsilon}=1+\int _0^t\phi_s^\epsilon
J_s^{x,\epsilon}ds+\int _0^t\psi_s^\epsilon J_s^{x,\epsilon}dW_s, 
\end{align*} 
with $\phi_s^\epsilon =\int _0^1b'\left(X_s^x+\theta   \epsilon
J_t^{x,\epsilon}\right)d\theta $
and $
\psi _s^\epsilon = \int _0^1\frac{\sigma d\theta  }{2\sqrt
{X_s^x+\epsilon \theta  J_t^{x,\epsilon}}}$. Under {\rm (H3)}, the
trajectories  $(X_t^x,0\leq t\leq T)$  are strictly 
positive a.s. (see Remark \ref{H3}). 
By  Lemma \ref{moment-inverse-cir}, $\int _0^t\psi_s^\epsilon dW_s$ is a
martingale. Then $J_t^{x,\epsilon}$ is explicitly given by 
\begin{align*}
J_t^{x,\epsilon}=\exp\left(\int _0^t\phi_s^\epsilon
ds+\int _0^t\psi_s^\epsilon dW_s-\frac{1}{2}\int _0^t(\psi_s^\epsilon)^2ds\right).
\end{align*}
We remark that $\int _0^t \frac{\sigma  }{2 \sqrt  {X_s^x}} dW_s = \frac{1}{2} \log
\left(\frac{X_t^x}{x}\right) - \int _0^t \frac{1}{2} \frac{b(X_s^x)}{X_s^x} 
ds$ and  
\begin{align*} 
J_t^{x,\epsilon}\leq C\sqrt {\frac{X_t^x}{x}}\exp\left(- \int _0^t \frac{1}{2}\frac{b(X_s^x)}{X_s^x}ds
-\frac{1}{2}\int _0^t(\psi_s^\epsilon)^2ds
+\int _0^t\left(\psi_s^\epsilon - \frac{\sigma  }{2\sqrt {X_s^x}}\right) dW_s\right).
\end{align*} 	
We upper-bound the moments $\Ee(J_t^{x,\epsilon})^\alpha $, $\alpha  >0$.
As $b(x) \geq b(0) - Kx$, for any $p$, 
\begin{align*}
(J_t^{x,\epsilon})^\alpha  \leq &
C \left(\frac{X_t^x}{x}\right)^{\frac{\alpha }{2}}
\exp\left( - \int _0^t \frac{\alpha  }{2}\frac{b(0)}{X_s^x}ds
-\frac{\alpha  }{2}\int _0^t(\psi_s^\epsilon)^2ds + \int _0^t \frac{\alpha ^2p}{2} \left(\psi_s^\epsilon - \frac{\sigma 
}{2\sqrt {X_s^x}}\right)^2 ds\right)\\
& \times  \exp\left( \int _0^t \alpha  \left(\psi_s^\epsilon - \frac{\sigma 
}{2\sqrt {X_s^x}}\right) dW_s
- \int _0^t \frac{\alpha ^2p}{2} \left(\psi_s^\epsilon - \frac{\sigma 
}{2\sqrt {X_s^x}}\right)^2 ds \right) 
\end{align*}
and by the   H\"older  Inequality for  $p>1$, we have
\begin{align*}
\Ee (J_t^{x,\epsilon})^\alpha 
\leq & C \left\{\Ee \left[
\left(\frac{X_t^x}{x}\right)^{\frac{\alpha p}{2(p-1)}}\exp\left(\frac{\alpha  p }{2(p-1)}\left[ 
- \int _0^t \frac{b(0)}{X_s^x}ds
 + \alpha  p \int _0^t \frac{\sigma ^2}{4 X_s^x} ds\right]\right)\right]\right\}^{\frac{p-1}{p}}. 
\end{align*}
Then, for any $0<\alpha < 4$, for any $p>1$ such that $\alpha  p \leq 4$,
\begin{eqnarray}\label{Jalpha}
\Ee (J_t^{x,\epsilon})^\alpha  \leq & C \left\{ \Ee \left[
\left(\frac{X_t^x}{x}\right)^{\frac{\alpha p}{2(p-1)}}\right]\right\}^{\frac{p-1}{p}}. 
\end{eqnarray}
The same computation shows that for the same couple $(p,\alpha )$ and for any $0\leq \beta  \leq
\frac{p-1}{p}$,
\begin{eqnarray}\label{Jalphabeta}
\Ee \left(\frac{ (J_t^{x,\epsilon})^\alpha  }{(X^x_t)^{\frac{\alpha  }{2} + \beta }}\right) 
\leq  \frac{C}{x^\frac{\alpha }{2}}\left\{ \Ee \left[
(X^x_t)^{-\frac{\beta p}{p-1}}\right]\right\}^{\frac{p-1}{p}}, 
\end{eqnarray}
which is bounded according to Lemma \ref{moment-inverse-cir-gene}.
Hence, by \eqref{Jalpha} for $(\alpha ,p)=(2,2)$,  there exists a positive constant $C$ such that 
\begin{align*}
\Ee(X_t^{x+\epsilon}-X_t^x)^2\leq C\epsilon^2,~~\forall  ~t\in [0,T], 
\end{align*}
and $X_t^{x+\epsilon}$ tends $X_t^x$ in probability.
We consider now the process $(J^x_t,t\in[0,T])$ solution of
\eqref{derivee-flot-eds}. Applying the integration by parts formula  in
\eqref{derivee-flot}, we obtain  that  $J_t^x =\sqrt {\frac{X^x_t}{x}}
\exp(\int _0^t (b^\prime (X^x_s) -\frac{b(X^x_s)}{2 X^x_s}  
+ \frac{\sigma ^2}{8}\frac{1}{X_s^x})ds)$, from which by using {\rm (H3)}, we 
have 
\begin{align}\label{Jbound}
J_t^x \leq  \sqrt {\frac{X^x_t}{x}}
\exp\left(-\int _0^t(b(0)-\frac{\sigma ^2}{4})\frac{ds}{2X_\theta ^x}\right)\exp(KT)
\leq C \sqrt {\frac{X^x_t}{x}}. 
\end{align}
Moreover, 
\begin{align*}
J_t^x  - J_t^{x,\epsilon}  = & \int _0^t b'(X^x_s)(J_s^x  - J_s^{x,\epsilon}) ds
+  \int _0^t \frac{\sigma }{2\sqrt{X^x_s}}(J_s^x -J_s^{x,\epsilon}) dW_s \\
& + \int _0^t (b'(X^x_s)-\phi  ^\epsilon_s) J^{x,\epsilon}_s ds
+  \int _0^t (\frac{\sigma }{2\sqrt{X^x_s}}-\psi^\epsilon_s) J^{x,\epsilon}_s dW_s.
\end{align*}
We study the convergence of $\Ee(J_t^x  - J_t^{x,\epsilon})^2$ as
$\epsilon$ tends to $0$. We set $\Verr^x_t := |J_t^x  -
J_t^{x,\epsilon}|$.  By the It\^o's  formula, 
\begin{align*}
\Ee(\Verr_t^x)^2  = & \Ee \int _0^t 2 b'(X^x_s) (\Verr_s^x)^2 ds
+ \Ee\int _0^t 2 (b'(X^x_s)-\phi  ^\epsilon_s) J^{x,\epsilon}_s(J_s^x  -
J_s^{x,\epsilon}) ds \\
& + \Ee \int _0^t \left(\frac{\sigma }{2\sqrt{X^x_s}}(J_s^x -J_s^{x,\epsilon}) +
(\frac{\sigma }{2\sqrt{X^x_s}}-\psi^\epsilon_s) J^{x,\epsilon}_s \right)^2 ds. 
\end{align*}
We upper-bound the third term in the right-hand side of the expression
above: as $ \frac{\sigma }{2\sqrt{X^x_s}}\geq \psi^\epsilon_s$ and
$(\frac{\sigma }{2\sqrt{X^x_s}}+\psi^\epsilon_s)\leq C/\sqrt {X^x_s}$,
\begin{align*}
\Ee \left( (\frac{\sigma }{2\sqrt{X^x_s}}-\psi^\epsilon_s)
J^{x,\epsilon}_s\right)^2\leq  C\Ee \left(
(\frac{\sigma }{2\sqrt{X^x_s}}-\psi^\epsilon_s)
\frac{(J^{x,\epsilon}_s)^2}{\sqrt {X^x_s}}\right). 
\end{align*}	
An  easy computation shows that
$\sqrt {X_s^x}(\frac{\sigma }{2\sqrt{X^x_s}}-\psi^\epsilon_s) \leq
\sqrt {\epsilon}\frac{\sqrt {J_s^{x,\epsilon}}}{\sqrt {X_s^x}}$. 
Then, 
\begin{align*}
\Ee \left( (\frac{\sigma }{2\sqrt{X^x_s}}-\psi^\epsilon_s)
J^{x,\epsilon}_s\right)^2 \leq C\sqrt {\epsilon} \Ee
\left(
\frac{(J^{x,\epsilon}_s)^{\frac{5}{2}}}{(X^x_s)^{\frac{3}{2}}}\right)
= C\sqrt {\epsilon} \Ee
\left(
\frac{(J^{x,\epsilon}_s)^{\frac{5}{2}}}{(X^x_s)^{\frac{5}{4} +
\frac{1}{4}}}\right) \leq C \sqrt{\epsilon}, 
\end{align*}
where we have applied \eqref{Jalphabeta} with $(\alpha,p,\beta) =
(\frac{5}{2}, \frac{8}{5}, \frac{1}{4} \leq \frac{p-1}{p} =
\frac{3}{8})$. 
By using the same arguments with \eqref{Jbound}, 
\begin{align*}
& \Ee\left(\frac{\sigma }{2\sqrt{X^x_s}}(J_s^x -J_s^{x,\epsilon})
(\frac{\sigma }{2\sqrt{X^x_s}}-\psi^\epsilon_s) J^{x,\epsilon}_s \right) 
 \leq 
\Ee \left(\frac{\sigma }{2\sqrt{X^x_s}}(J_s^x +J_s^{x,\epsilon}) \sqrt {\epsilon}\frac{\sqrt {J_s^{x,\epsilon}}}{X_s^x}
 J^{x,\epsilon}_s \right) \\
& \leq C \sqrt {\epsilon} \left( \Ee\left(
\frac{(J_s^{x,\epsilon})^{\frac{3}{2}}}{X^x_s}\right) + \Ee\left(
\frac{(J_s^{x,\epsilon})^{\frac{5}{2}}}{(X^x_s)^{\frac{3}{2}}}\right)\right)\leq
C \sqrt{\epsilon}, 
\end{align*}
where we have applied \eqref{Jalphabeta} with $(\alpha,p,\beta) =
(\frac{3}{2}, \frac{8}{3}, \frac{1}{4} \leq \frac{p-1}{p}
=\frac{5}{3})$.  
An easy computation shows that 
$|b'(X^x_s) - \phi_s^\epsilon| \leq \epsilon J_s^{x,\epsilon}
\|b''\|_{\infty }$. Coming back to the upper-bound of $\Ee(\Verr_t^x)^2$, we have
\begin{align}\label{J_prelim}
\Ee(\Verr_t^x)^2  \leq  C \int _0^t \Ee(\Verr_s^x)^2 ds
+ C \sqrt {\epsilon} t 
+ \Ee\left(\int _0^t \frac{\sigma  ^2 }{4 X^x_s}(\Verr^x_s)^2 ds \right). 
\end{align}
To conclude on the convergence, as $\epsilon$ tends to 0, we use the  
stochastic time change technique introduced in \cite{berkaoui-bossy-al-07} to analyze the
strong rate of convergence. For any $\lambda  >0$, we define the stopping time $\tau _\lambda  $ as
\begin{eqnarray*}
\tau _\lambda  = \inf\{s\in[0,T],~\gamma  (s) \geq \lambda  \}\mbox{ with } \gamma (t) = \int _0^t
\frac{\sigma  ^2 ds}{4 X^x_s}\mbox{ and } \inf\emptyset  = T.
\end{eqnarray*}
Then, by using the Lemma \ref{moment-inverse-cir-gene} with the Markov Inequality, 
\begin{align*}
\Pp(\tau _\lambda  < T)  &= \Pp( \gamma (T) \geq \lambda  )  \leq \exp(-\frac{\lambda }{2} ) \Ee \left(
\exp\left(\int _0^T \frac{\sigma  ^2 ds}{8 X^x_s}\right)\right) \leq  C
\exp(-\frac{\lambda }{2}). 
\end{align*}
Choosing $\lambda  =
-\log (\epsilon ^r)$ for a given $r>0$, we have that $\Pp(\tau _\lambda  < T) \leq C
\epsilon^\frac{r}{2}$ and
\begin{eqnarray*}
\Ee(\Verr^x_T)^2\leq \Ee(\Verr_{\tau _\lambda }^x)^2 + C \epsilon^\frac{r}{4}. 
\end{eqnarray*}	
With \eqref{J_prelim}, we can easily check that for any bounded stopping time $\tau \leq T$,
\begin{align*}
\Ee(\Verr_\tau^x)^2 & \leq \int _0^T \exp( C(T-s)) \left\{ 
\Ee\left( \int _0^\tau  \frac{\sigma  ^2 ds}{4 X^x_s} (\Verr^x_s)^2\right) + C\sqrt {\epsilon}\right\}
\end{align*}
and for $\tau _\lambda $,  
\begin{align*} 
\Ee(\Verr_{\tau _\lambda  }^x)^2 &
\leq C_1 \Ee\left(\int _0^{\tau _\lambda }(\Verr^x_s)^2 d\gamma  (s) \right)  + C_0
\sqrt {\epsilon}, 
\end{align*}
for some positive constants $C_0$ and $C_1$,  depending on $T$. 
After the change of time $u = \gamma  (s)$,  we can apply the Gronwall Lemma
\begin{align*}
\Ee(\Verr_{\tau _\lambda  }^x)^2
\leq C_1 \Ee\left( \int _0^\lambda  (\Verr^x_{\tau _u})^2 du\right) + C_0
\sqrt {\epsilon} \leq T C_0 \sqrt {\epsilon} \exp(C_1 \lambda ). 
\end{align*}
With the choice $r = (4 C_1)^{-1}$ and  $\lambda  = -\log (\epsilon ^r)$, we
get $\Ee(\Verr_{\tau _\lambda  }^x )^2\leq T C_0 \epsilon^{\frac{1}{4}}$.
As $T$ is arbitrary in the preceding reasoning, we conclude that $\Ee|J_t^x-J_t^{x,\epsilon}|$ 
tends to $0$ with
$\epsilon$ for all $t\in[0,T]$.  Consider now 
\begin{align*}
&\frac{g(X_t^{x+\epsilon})-g(X_t^x)}{\epsilon} -
g'(X_t^x)J_t^x =J_t^{x,\epsilon}\int _0^1g'(X_t^x+\epsilon 
\alpha  J_t^{x,\epsilon})d\alpha  - J_t^xg'(X_t^x) \\
&=(J_t^{x,\epsilon}-J_t^x)\int _0^1g'(X_t^x+\epsilon 
\alpha  J_t^{x,\epsilon})d\alpha +J_t^x\int _0^1\left(g'(X_t^x+\epsilon 
\alpha  J_t^{x,\epsilon})-g'(X_t^x)\right)d\alpha \\
& :=A^\epsilon + B^\epsilon.
\end{align*}
$\Ee A^\epsilon \leq \|g'\|_\infty   \Ee|J_t^x-J_t^{x,\epsilon}|$,  which
tends to zero with $\epsilon$. $B^\epsilon$ is a uniformly integrable
sequence. $g'$ is a continuous function. By the Lebesgue
Theorem, as $X_t^{x+\epsilon}$ tends $X_t^x$ in probability,
$B^\epsilon$ tends to $0$ with $\epsilon$.
As a consequence,  $\Ee(\frac{g(X_t^{x+\epsilon})-g(X_t^x)}{\epsilon})$ 
tends to $\Ee[g'(X_t^x)J_t^x]$ when $\epsilon$ tends to  $0$. 
\end{proof}

\begin{proof}[Proof of Proposition \ref{justification-hull-white}]
The proof is very similar to the proof of Proposition
\ref{justification-cir}. Again, we consider only the case when $h(x)$
and $k(x)$ are nil. Let
$J_t^{x,\epsilon}=\frac{1}{\epsilon}(X_t^{x+\epsilon}-X_t^x)$, given
also by 
\begin{align*}
J_t^{x,\epsilon}=\exp\left(\int _0^t\phi _s^\epsilon ds+\int _0^t \psi _s^\epsilon dW_s-\frac{1}{2}\int _0^t(\psi _s^\epsilon)^2ds\right),
\end{align*}
with $\phi _s^\epsilon =\int _0^1b'(X_t^x+ \theta   \epsilon J_s^{x,\epsilon})d\theta $
and $\psi _t^\epsilon = 
\int _0^1\frac{\alpha  \sigma  d\theta   }{(X_t^x+\theta \epsilon J_s^{x,\epsilon})^{1-\alpha }}$. 
For any $C^1$ function $g(x)$ with bounded derivative, we have 
\begin{align*}
&\frac{g(X_t^{x+\epsilon})-g(X_t^x)}{\epsilon} -
g'(X_t^x)J_t^x =J_t^{x,\epsilon}\int _0^1g'(X_t^x+\epsilon 
\theta   J_t^{x,\epsilon})d\theta   - J_t^xg'(X_t^x) \\
&=(J_t^{x,\epsilon}-J_t^x)\int _0^1g'(X_t^x+\epsilon 
\theta  J_t^{x,\epsilon})d\theta +J_t^x\int _0^1\left(g'(X_t^x+\epsilon 
\theta  J_t^{x,\epsilon})-g'(X_t^x)\right)d\theta \\
& :=A^\epsilon + B^\epsilon.
\end{align*}
$\Ee(J_t^{x,\epsilon})^\alpha  \leq \exp(\alpha \|b'\|_\infty  t)\Ee
\exp(\alpha \int _0^t
\psi _s^\epsilon dW_s- \frac{\alpha}{2}\int _0^t(\psi _s^\epsilon)^2ds)$ and, by using Lemma
\ref{minoration_1}{\it{(ii)}}, one  easily concludes that
$\Ee(J_t^{x,\epsilon})^\alpha  \leq C$ and consequently that $X_t^{x+\epsilon}$ converges to
$X_t^x$ in $L^2(\Omega )$. Then,  by applying the Lebesgue Theorem,
$\Ee|B^\epsilon|$ tends  to $0$. 
Moreover, 
$\Ee|A^\epsilon| \leq \|g'\|_\infty  \sqrt {\Ee
|J_t^{x,\epsilon}-J_t^x|^2}$.
We can proceed as  in the proof of the
Proposition \ref{justification-cir}, to 
show that  $\Ee |J_t^{x,\epsilon}-J_t^x|^2 $ tends to 0, but now the
moments $\Ee(J_t^{x,\epsilon})^\alpha, \alpha >0 $ are bounded and the 
Lemma \ref{moment-inverse-HUW-gene} ensures that the $\Ee |
X_t^x|^{-p}, p>0$ are all bounded. 
\end{proof}

%%%%%%%%%%%%%%%%%%%%%%%%%%%%%%%%%%%%%%%%%%%%%%%%%%%%%%%%%%%%%%%%%%%%%%%%%
\section{End of the proof of Proposition \ref{proposition_edp_alpha=1/2}}\label{end_proof_deriv4}
To compute $\frac{\partial  ^4u}{\partial  x^4}(t,x)$, we need first to avoid the
appearance of  $J^x_{t}(1)$ in the expression of $\frac{\partial  ^3u}{\partial 
x^3}(t,x)$. We transform  
the expression of $\frac{\partial  ^3u}{\partial  x^3}(t,x)$ in
\eqref{expression-derivee-3-en-x}, in order to obtain $\frac{\partial  ^3u}{\partial 
x^3}(t,x)$ as a sum  of terms of the form
\begin{align*} 
&\Ee\left( \exp\left(\int _0^{T-t}\beta (X^x_s(1)) ds\right) \Gamma  (X^x_{T-t}(1))
J^x_{T-t}(1)\right) \\
& + \int _0^{T-t} \Ee\left\{ \exp\left(\int _0^s \beta 
(X^x_u(1)) du \right) J^x_s (1) \Lambda  (X^x_s(1)) \right\} ds
\end{align*} 
for some functions $\beta (x)$, $\Gamma (x)$, $\Lambda (x)$. In this first step, to
simplify the writing, we
write $X^x_s$ instead of $X^x_s(1)$.  Two
terms are not of this 
form in \eqref{expression-derivee-3-en-x}: 
\begin{align*}
\textrm{I}  &= 2\Ee\left\{\exp\left(2\int _0^{T-t}b'(X^x_s) ds\right)
f''(X^x_{T-t}) \int _0^{T-t} b''(X^x_s) J^x_sds\right\} \\
\textrm{II}  &= 
2\Ee\Big\{ \int _0^{T-t}
\exp\left(2\int _0^s b'(X^x_u)du\right)
 \frac{\partial  u}{\partial  x}(t+s,X^x_s)b''(X^x_s)\left(\int _0^s
b''(X^x_u)J^x_u du\right) ds  \Big\}. 
\end{align*}
The integration by parts formula gives immediately that
\begin{align*}
\textrm{II} = 2 \Ee\left\{  \int _0^{T-t} 
b''(X^x_s) J^x_s 
\left(\int _s^{T-t}  \frac{\partial  u}{\partial  x} (t+u,X^x_u) \exp\left(2\int _0^u
b'(X^x_\theta  ) d\theta  \right)b''(X^x_u) du \right)
ds \right\}.
\end{align*}
By using again the Markov property and the time homogeneity of the
process $(X^x_t)$, 
\begin{align*}
\left.\Ee\left[\exp\left(2\int _s^{T-t} b'(X^x_\theta  ) d\theta 
\right)f''(X^x_{T-t}) \right/ \FF_s\right]
=\Ee\left[ \exp\left(2\int _0^{T-t-s} b'(X^y_\theta  ) d\theta 
\right)f''(X^y_{T-t-s})\right]\bigg|_{y = X_s^x}
\end{align*}
and, by using \eqref{new-expression-derivee-2-en-x},
\begin{align*}
\textrm{I} = & 2 \int _0^{T-t} \Ee\left\{
b''(X^x_s) J^x_s \exp\left(2\int _0^s b'(X^x_\theta  ) d\theta  \right)
\frac{\partial  ^2 u}{\partial  x^2}(t+s,X^x_s) \right\} ds \\
& - 2 \int _0^{T-t} \Ee\left\{
b''(X^x_s) J^x_s \exp\left(2\int _0^s b'(X^x_\theta  ) d\theta 
\right)\right. \\
& \left. \hspace{1cm}
\times  \left(\int _0^{T-t-s} \Ee\left[ \frac{\partial  u}{\partial  x} (t+s+u,X^y_u)
\exp\left(2\int _0^u b'(X^y_\theta  ) d\theta  \right)b''(X^y_u) \right]
\bigg|_{y = X^x_s} du \right)\right\} ds.
\end{align*}
Conversely, 
\begin{align*}
& \int _0^{T-t-s}\Ee\left[ \frac{\partial  u}{\partial  x} (t+s+u,X^y_u)
\exp\left(2\int _0^u b'(X^y_\theta  ) d\theta  \right)b''(X^y_u) \right]
\bigg|_{y = X^x_s} du \\
& = \int _s^{T-t} \Ee\left. \left[ \frac{\partial  u}{\partial  x} (t+u,X^x_u) \exp\left(2\int _s^u
b'(X^x_\theta  ) d\theta  \right)b''(X^x_u)\right/\FF_s \right] du
\end{align*}
and then
\begin{align*}
\textrm{I} = & 2\int _0^{T-t} \Ee\left\{
b''(X^x_s) J^x_s \exp\left(2\int _0^s b'(X^x_\theta  ) d\theta  \right)
\frac{\partial  ^2 u}{\partial  x^2}(t+s,X^x_s) \right\} ds \\
& - 2\Ee\left\{  \int _0^{T-t} 
b''(X^x_s) J^x_s 
\left(\int _s^{T-t}  \frac{\partial  u}{\partial  x} (t+u,X^x_u) \exp\left(2\int _0^u
b'(X^x_\theta  ) d\theta  \right)b''(X^x_u) du \right)
ds \right\}.
\end{align*}
Finally, replacing $\textrm{I}$ and $\textrm{II}$ in
\eqref{expression-derivee-3-en-x}, we get
\begin{align*}
\frac{\partial  ^3u}{\partial  x^3}(t,x) = &
\Ee\left\{\exp\left(2\int _0^{T-t}b'(X^x_s) ds\right)
f^{(3)}(X^x_{T-t})J^x_{T-t}\right\} \\
& + \int _0^{T-t}\Ee\left\{
\exp\left(2\int _0^s b'(X^x_u)du\right)J^x_s
 \right. \\
& \hspace{1.5cm} \left. 
\times  \left( 3 \frac{\partial  ^2u}{\partial  x^2}(t+s,X^x_s) b''(X^x_s) + \frac{\partial  u}{\partial  x}(t+s,X^x_s) b^{(3)}(X^x_s)\right)
\right\}ds. 
\end{align*} 
To eliminate $J^x_{t}$, we introduce the probability $\Qq^{3/2}$ such that
$\frac{ d\Qq^{3/2}}{
d\Pp}\bigg|_{\FF_t}=\frac{1}{\ZZ_t^{(1,\frac{3}{2})}}$. Then 
\begin{align*}
\frac{\partial  ^3u}{\partial  x^3}(t,x) = &
\Ee^{3/2}\left\{\exp\left(2\int _0^{T-t}b'(X^x_s) ds\right)
f^{(3)}(X^x_{T-t})\ZZ_{T-t}^{(1,\frac{3}{2})}J^x_{T-t}\right\} \\
& + \int _0^{T-t}\Ee^{3/2}\left\{
\exp\left(2\int _0^s b'(X^x_u)du\right)\ZZ_s^{(1,\frac{3}{2})}J^x_s
 \right. \\
& \hspace{2.5cm}\left. 
\times   \left(3 \frac{\partial  ^2u}{\partial  x^2}(t+s,X^x_s) b''(X^x_s) +  \frac{\partial  u}{\partial  x}(t+s,X^x_s) b^{(3)}(X^x_s)\right)
\right\}ds. 
\end{align*}
Again, we note that  $\ZZ_t^{(1,\frac{3}{2})}J_{t}^x =
\exp\left(\int _0^{t} b'(X^x_u) du\right)$ 
and
\begin{align*}
\frac{\partial  ^3u}{\partial  x^3}(t,x) = &
\Ee^{3/2}\left\{\exp\left(3\int _0^{T-t}b'(X^x_s) ds\right)
f^{(3)}(X^x_{T-t})\right\} \\
& + \int _0^{T-t}\Ee^{3/2}\left\{
\exp\left(3\int _0^s b'(X^x_u)du\right)
 \right. \\
& \hspace{2.3cm}\left. 
\times  \left(3 \frac{\partial  ^2u}{\partial x^2}(t+s,X^x_s) b''(X^x_s)+
\frac{\partial  u}{\partial  x}(t+s,X^x_s) b^{(3)}(X^x_s)\right)\right\}ds
\end{align*}
where we write $X^x_\cdot$ instead of $X^x(1)_\cdot$. 
Finally, as $\LL^{\Qq^{3/2}}(X^x(1)) = \LL^{\Pp}(X^x(\frac{3}{2}))$, we
 obtain the following expression for $\frac{\partial  ^3u}{\partial  x^3}(t,x)$: 
\begin{align*} 
\frac{\partial  ^3u}{\partial  x^3}(t,x) = &
\Ee\left\{\exp\left(3\int _0^{T-t}b'(X^x_s(\frac{3}{2})) ds\right)
f^{(3)}(X^x_{T-t}(\frac{3}{2}))\right\} \\
& +  \int _0^{T-t}\Ee\left\{
\exp\left(3\int _0^s b'(X^x_u(\frac{3}{2})du\right)
\left(3 \frac{\partial  ^2u}{\partial  x^2}(t+s,X^x_s(\frac{3}{2}))
b''(X^x_s(\frac{3}{2})) \right. \right. \\
& \hspace{5.5cm}\left. \left. 
+ \frac{\partial  u}{\partial  x}(t+s,X^x_s(\frac{3}{2})) b^{(3)}(X^x_s(\frac{3}{2}))\right)
\right\}ds. 
\end{align*} 
%% Fourth derivative %%%%%%%%%%%%%%%%%%%%%%%%%%%%%%%%%%%%%%%%%%%%%%%%%%%
%%%%%%%%%%%%%%%%%%%%%%%%%%%%%%%%%%%%%%%%%%%%%%%%%%%%%%%%%%%%%%%%%%%%%%%%
$J^x_s(\frac{3}{2})$ exists and is given by \eqref{derivee-flot}.
By the Proposition \ref{justification-cir}, $\frac{\partial ^3u}{\partial  x^3}(t,x)$ is
continuously differentiable and  
\begin{align*}
 \frac{\partial  ^4u}{\partial  x^4}(t,x) = &
\Ee\left\{\exp\left(3\int _0^{T-t}b'(X^x_s(\frac{3}{2})) ds\right)
\right.\\
& \hspace{1cm}\left.
\times  \left[3f^{(3)}(X^x_{T-t}(\frac{3}{2}))\int _0^{T-t}
b''(X^x_s(\frac{3}{2}))J^x_s(\frac{3}{2}) ds  +
f^{(3)}(X^x_{T-t}(\frac{3}{2}))J^x_{T-t}\frac{3}{2})\right]\right\}\\
& +\int _0^{T-t} \Ee\left\{ \exp\left(3\int _0^s b'(X^x_u(\frac{3}{2}))du\right) 
 \left(3 \frac{\partial  ^2u}{\partial  x^2}(t+s,X^x_s(\frac{3}{2}))
b''(X^x_s(\frac{3}{2}))\right. \right. \\
& \hspace{3cm}\left. \left. 
+ \frac{\partial  u}{\partial  x}(t+s,X^x_s(\frac{3}{2})) b^{(3)}(X^x_s(\frac{3}{2}))\right)
\int _0^{s}3 
b''(X^x_u(\frac{3}{2}))J^x_u(\frac{3}{2}) du \right\}ds \\
& +\int _0^{T-t} \Ee\left\{ \exp\left(3\int _0^s
b'(X^x_u(\frac{3}{2}))du\right)
J^x_s(\frac{3}{2})
\left(3 \frac{\partial ^3u}{\partial  x^3}(t+s,X^x_s(\frac{3}{2}))
b''(X^x_s(\frac{3}{2})) \right. \right. \\
& \hspace{3cm}\left. \left. 
+ 4 \frac{\partial ^2u}{\partial  x^2}(t+s,X^x_s(\frac{3}{2}))
b^{(3)}(X^x_s(\frac{3}{2})) \right. \right. \\
& \hspace{3cm}\left. \left. 
+ \frac{\partial  u}{\partial  x}(t+s,X^x_s(\frac{3}{2}))
b^{(4)}(X^x_s(\frac{3}{2}))\right)
\right\}ds, 
\end{align*}
from which we can conclude on \eqref{borne_deriv_u_alpha=1/2}.

\bibliographystyle{plain}

\end{document}